\newtheorem{theorem}{Theorem}[section]
\newtheorem{proposition}[theorem]{Proposition}
\newtheorem{corollary}[theorem]{Corollary}
\newtheorem{lemma}[theorem]{Lemma}
\newtheorem{definition}[theorem]{Definition}
\newtheorem{remark}[theorem]{Remark}
\newcommand{\eps}{\epsilon}
\newcommand{\ga}{\gamma}
\newcommand{\Ga}{\Gamma}
\newcommand{\ka}{\kappa}
\newcommand{\la}{\lambda}
\newcommand{\La}{\Lambda}
\newcommand{\mba}{\mathbf{a}}
\newcommand{\mbA}{\mathbf{A}}
\newcommand{\mbb}{\mathbf{b}}
\newcommand{\mbc}{\mathbf{c}}
\newcommand{\mbD}{\mathbf{D}}
\newcommand{\mbe}{\mathbf{e}}
\newcommand{\mbH}{\mathbf{H}}
\newcommand{\mbm}{\mathbf{m}}
\newcommand{\mcA}{\mathcal{A}}
\newcommand{\mcD}{\mathcal{D}}
\newcommand{\mcL}{\mathcal{L}}
\newcommand{\mcO}{\mathcal{O}}
\newcommand{\mfb}{\mathfrak{b}}
\newcommand{\mfd}{\mathfrak{d}}
\newcommand{\mfg}{\mathfrak{g}}
\newcommand{\mfgl}{\mathfrak{g}\mathfrak{l}}
\newcommand{\mfh}{\mathfrak{h}}
\newcommand{\mfH}{\mathfrak{H}}
\newcommand{\mfk}{\mathfrak{k}}
\newcommand{\mfn}{\mathfrak{n}}
\newcommand{\mfp}{\mathfrak{p}}
\newcommand{\mfq}{\mathfrak{q}}
\newcommand{\mfsl}{\mathfrak{s}\mathfrak{l}}
\newcommand{\mft}{\mathfrak{t}}
\newcommand{\mfU}{\mathfrak{U}}
\newcommand{\msc}{\mathsf{c}}
\newcommand{\msd}{\mathsf{d}}
\newcommand{\msD}{\mathsf{D}}
\newcommand{\msH}{\mathsf{H}}
\newcommand{\msL}{\mathsf{L}}
\newcommand{\demo}{\noindent {\it \small Proof:}\quad}
\newcommand{\iso}{\stackrel{\sim}{\longrightarrow}}
\newcommand{\wh}{\widehat}
\newcommand{\wt}{\widetilde}
\newcommand{\lra}{\longrightarrow}
\newcommand{\into}{\hookrightarrow}
\newcommand{\onto}{\twoheadrightarrow}
\newcommand{\C}{\mathbb{C}}
\newcommand{\Z}{\mathbb{Z}}
\newcommand{\ot}{\otimes}
\newcommand{\ol}{\overline}
\newcommand{\lan}{\langle}
\newcommand{\ran}{\rangle}
\newcommand{\nwmod}[1]{W_{#1}}
\newcommand{\nwmods}[1]{W^{#1}}
\newcommand{\dhc}[2]{H_{#1,#2}}
\title{Double affine Lie algebras and finite groups}
\author{Nicolas Guay} 
\author[David Hernandez]{David Hernandez$^1$}\thanks{$^1$Supported partially by ANR through Project "G\'eom\'etrie et Structures
  Alg\'ebriques Quantiques"}
\author{Sergey Loktev}
\date{}
\begin{document}

\begin{abstract}
We begin to study the Lie theoretical analogs of symplectic
reflection algebras for $\Ga$ a finite cyclic group, which we call ``cyclic double
affine Lie algebra''. We focus on
type $A$ : in the finite (resp. affine, double affine) case, we prove
that these structures are finite (resp. affine, toroidal) type Lie
algebras, but the gradings differ. The case which is essentially new is
$\mfsl_n(\C[u,v]\rtimes\Ga)$.  We describe its universal
central extensions and start the study of its representation theory,
in particular of its highest weight integrable modules and Weyl modules. We also consider the first Weyl algebra $A_1$ instead of the polynomial ring $\C[u,v]$, and, more generally, a rank one rational Cherednik algebra.  We study quasi-finite highest weight representations of these Lie algebras.
\end{abstract}

\maketitle

\tableofcontents

\section{Introduction}

Double affine Hecke algebras have been well studied for more than fifteen
years now, although they are still very mysterious, and symplectic
reflection algebras appeared over seven years ago \cite{EtGi} as
generalizations of double affine Hecke algebras of rational type. Even
more mysterious are the double affine Lie algebras and their quantized
version introduced in \cite{gkv}, studied for instance in \cite{He1, 
He2, nag, npc, Sc2, VaVa1, VaVa2}
and the references in the survey \cite{He3}.

In this paper, we study candidates for Lie theoretical
analogs of symplectic
reflection algebras, which we call ``cyclic double
affine Lie algebras'' : we look at a family of
Lie algebras which have a lot of
similarities with affine and double affine Lie algebras, but whose structure
depends on a finite cyclic group $\Gamma$. 

More precisely, we will be interested
in the Lie algebras $\mfsl_n(\C[u]\rtimes\Ga)$, $\mfsl_n(\C[u^{\pm
1}]\rtimes\Ga)$,  $\mfsl_n(\C[u,v]\rtimes\Ga)$, $\mfsl_n(\C[u,v]^{\Ga})$,  $\mfsl_n(\C[u^{\pm
1},v]\rtimes\Ga)$, $\mfsl_n(\C[u^{\pm 1},v^{\pm 1}]\rtimes\Ga)$,
$\mfsl_n(A_1\rtimes\Ga)$ (where $A_1$ is the first Weyl algebra), $\mfsl_n(\msH_{t,\mbc}(\Ga))$ (where $\msH_{t,\mbc}(\Ga)$ is a rank one rational Cherednik algebra) and their
universal central extensions. This is motivated by the recent work
\cite{Gu3} in which deformations of the enveloping algebras of some Lie algebras closely related to these were constructed and connected to symplectic
reflection algebras
for wreath products via a functor of Schur-Weyl type. When $\Ga$ is
trivial, such deformations in the case of $\C[u^{\pm 1}]$ are the affine 
quantum groups, whereas the case $\C[u]$ corresponds to
Yangians. In the double affine setup, the quantum algebras attached to
$\C[u^{\pm 1},v^{\pm 1}],\C[u^{\pm 1},v]$ and $\C[u,v]$ for
$\mfsl_n$ are the quantum toroidal algebras, the affine Yangians and the deformed double
current algebras \cite{Gu1,Gu2}. 

In
this article, we want to study more the structure and representation
theory for the Lie algebras above, hoping that, in a future work, we
will be able to extend some of our results to the deformed setup. We
consider the central extensions for a number of reasons: in the affine
case, the full extent of the representation theory comes into life
when the centre acts not necessarily trivially; certain
presentations of those Lie algebras are actually simpler to state for
central extensions since they involve fewer relations; some of the
results can be extended without much difficulty to those central
extensions, etc.  As vector spaces, the centers of the universal
extensions are given by certain first cyclic homology groups.

At first sight, one may be tempted to think that introducing the group $\Ga$ leads to Lie algebras which are different
from those that have interested Lie theorists since the advent of Kac-Moody Lie algebras (it was our first motivation),
but this is not entirely the case. Indeed, in the one variable case, when we consider not only ordinary polynomials
but Laurent polynomials, we prove that we get back affine Lie algebras (proposition \ref{isoloop}); this is in accordance with conjectures of V.
Kac \cite{K} and the classification obtained by V. Kac and O. Mathieu \cite{K,kac,Ma1,Ma2}. In the case of Laurent polynomials in two variables, we recover toroidal Lie algebras 
(proposition \ref{isotor}). (The mixed case $\C[u^{\pm 1},v]$ also does not yield new Lie algebras.)  
However, when we consider only polynomials in non-negative powers of the variables, we
obtain distinctly new Lie algebras (see proposition \ref{presC}). 

Another motivation comes from geometry. The loop algebra
$\mfsl_n(\C[u^{\pm 1}])$ can be viewed as the space of polynomial maps
$\C^{\times} \lra \mfsl_n$. One can also consider the affine line
instead of the torus
$\C^{\times}$. More generally, one can consider the space of regular maps
$X\lra\mfsl_n$ where $X$ is an arbitrary affine algebraic variety
\cite{FeLo}. When $X$ is two-dimensional, the most natural candidate is
the torus $\C^{\times} \times \C^{\times}$, although a simpler case is the
plane $\C^2$. The variety $X$ does not necessarily have to be smooth and
one interesting singular two-dimensional case is provided by the Kleinian
singularities $\C^2 / G$ where $G$ is a finite subgroup of $SL_2(\C)$.
We are thus led to the problem of studying the Lie algebras
$\mfsl_n(\C[u,v]^{G})$ where $\C[u,v]^{G}$ is the ring of
invariant elements
for the action of $G$. However, following one of the main ideas
explained in the introduction of
\cite{EtGi}, it may be interesting to replace $\C[u,v]^{G}$
by the smash product $\C[u,v]\rtimes G$. Moreover, we can expect the full
representation theory to come to life when we consider the universal
central extensions of $\mfsl_n(\C[u,v]^{G})$ and of 
$\mfsl_n(\C[u,v]\rtimes G )$. In \cite{FeLo}, the authors
showed that the dimension of the local Weyl modules at a point $p$ in the
case of a smooth affine variety $X$ does not depend on $p$. One goal 
is to understand Weyl modules supported at a Kleinian singularity. 

This paper is organized as follows. We will denote by $\Ga$ the group $\Z/d\Z$, whereas $G$ will be a more general finite group. After general reminders on matrix Lie algebra over rings (in particular with the example of $\mfsl_n(\C[G])$ in Section \ref{matrings}, we start 
with the affine case in Section \ref{cycALA}. We study
the structure of $\mfsl_n(\C[u]\rtimes\Ga)$ and $\mfsl_n(\C[u^{\pm
1}]\rtimes\Ga)$, obtain different types of decomposition and give presentations in
terms of generators and relations.  We prove that $\mfsl_n(\C[u^{\pm
1}]\rtimes\Ga)$ is simply the usual loop algebra $\mfsl_{nd}(\C[t^{\pm
1}])$, but with a non-standard grading. Guided by the affine setup, we prove analogous results for the double affine cases in Section \ref{cycDALA}, the representations being studied in Section \ref{repcdala}.  We consider certain highest weight modules for $ \mfsl_n( \C[u,v] \rtimes \Gamma), \mfsl_n( \C[u^{\pm 1},v] \rtimes \Gamma) , \mfsl_n( \C[u^{\pm 1},v^{\pm 1}] \rtimes \Gamma)  $ and state a criterion for the integrability of their unique irreducible quotients. We also study some Weyl modules for $\mfsl_n( \C[u,v] \rtimes G)$ and $\mfsl_n( \C[u,v]^{\Gamma})$. In the first case, we show that, contrary to what might be expected at first sight, Weyl modules are rather trivial; in the second case, we can apply results of Feigin and Loktev to derive formulas for the dimension some of the local Weyl modules and we establish a lower bound on their dimensions.  In Section \ref{matdiff}, assuming usually that $t\neq 0$, we study parabolic subalgebras of $\mfgl_n( \msH_{t,\mbc}(\Ga) )$ and construct an embedding of this Lie algebra into a Lie algebra of infinite matrices. This is useful in Section \ref{hwrep} to construct quasi-finite highest weight modules. The main result of this section is a criterion for the quasi-finiteness of the irreducible quotients of Verma modules. Further possible directions of research are discussed in Section \ref{further}.
\\

{\bf Acknowledgements :} The first author gratefully acknowledges the hospitality of the Laboratoire de Math\'ematiques de l'Universit\'e de Versailles-St-Quentin-en-Yvelines where this project was started while he was a postdoctoral researcher supported by the Minist\`ere fran\c{c}ais de la Recherche. He is also grateful for the support received from the University of Edinburgh, the University of Alberta and an NSERC Discovery Grant. S.L. was partially supported by RF President Grant N.Sh-3035.2008.2, grants RFBR-08-02-00287, RFBR-CNRS-07-01-92214, RFBR-IND-0801-91300 and the P. Deligne 2004 Balzan prize in mathematics. We thank A. Pianzola for pointing out the reference \cite{BGT}. We are grateful to I. Gordon for his comments, for his suggestions regarding corollary \ref{lbWmod2} and for pointing out the counterexample from the work of M. Haiman which we present at the end of section \ref{Weylmodinv}.

\section{Matrix Lie algebras over rings}\label{matrings}

\subsection{General results}

In this section, we present general definitions and results which will be
useful later. All algebras and tensor products are over $\C$, unless specified otherwise.

\begin{definition}
Let $A$ be an arbitrary associative algebra. The Lie algebra $\mfsl_n(A)$
is defined as the derived Lie algebra $[\mfgl_n(A),\mfgl_n(A)]$ where
$\mfgl_n(A) = \mfgl_n\otimes A$.
\end{definition}

In other words, the Lie subalgebra $\mfsl_n(A)\subset \mfgl_n(A)$ is
the sum of $\mfsl_n(\C) \ot A$ and of
the space of all scalar matrices with entries in $[A,A]$.  Thus the
cyclic homology group $HC_0(A) = \frac{A}{[A,A]}$ accounts for the
discrepancy between  $\mfsl_n(A)$ and  $\mfgl_n(A)$.

Since $\mfsl_n(A)$ is a perfect Lie algebra (that is, $[\mfsl_n(A), \mfsl_n(A)]
= \mfsl_n(A)$),
it possesses a universal
central extension $\wh{\mfsl}_n(A)$ unique up to isomorphism. The following
theorem gives a simple
presentation of $\wh{\mfsl}_n(A)$ in terms of generators and
relations.

\begin{theorem}\cite{KL}
Assume that $n\ge 3$. $\wh{\mfsl}_n(A)$ is
isomorphic to the Lie algebra generated by elements $F_{ij}(a)$
($1\leq i,j\leq n$, $a\in A$) which
satisfy the following relations :
\begin{equation*} [F_{ij}(a_1), F_{jk}(a_2)] = F_{ik}(a_1a_2), \;\;
\text{ for }i\neq
j\neq k\neq i,   \end{equation*}
\begin{equation*}  [F_{ij}(a_1),
F_{kl}(a_2)] = 0, \;\; \text{ for }i\neq j\neq k\neq l\neq i .
\end{equation*}
\end{theorem}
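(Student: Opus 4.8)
The plan is to recognise the Lie algebra defined by the presentation as the Steinberg Lie algebra $\mathfrak{st}_n(A)$ — generated by symbols $F_{ij}(a)$ with $i\neq j$, each $a\mapsto F_{ij}(a)$ taken $\C$-linear, subject to the two displayed families of relations — and to identify it with $\wh{\mfsl}_n(A)$ by exhibiting the covering map and then checking the universal property. First I would construct the canonical homomorphism $\pi\colon\mathfrak{st}_n(A)\to\mfsl_n(A)$ sending $F_{ij}(a)\mapsto E_{ij}\ot a$. It is well defined because the elementary matrices satisfy exactly these relations in $\mfgl_n(A)$: the chain conditions $i\neq j\neq k\neq i$ and $i\neq j\neq k\neq l\neq i$ encode precisely the multiplication rule $E_{ij}E_{kl}=\delta_{jk}E_{il}$ and the resulting vanishing of the commutators. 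It is surjective because the off-diagonal $E_{ij}\ot a$ together with their brackets $[E_{ij}\ot a,E_{ji}\ot b]=E_{ii}\ot ab-E_{jj}\ot ba$ span $\mfsl_n(A)=[\mfgl_n(A),\mfgl_n(A)]$.

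Next I would observe that $\mathfrak{st}_n(A)$ is perfect: since $n\ge 3$, for any $i\neq j$ one may pick $k\notin\{i,j\}$ and write $F_{ij}(a)=[F_{ik}(a),F_{kj}(1)]$, so every generator is a bracket (this uses that $A$ is unital, as in all cases of interest). Hence $\mathfrak{st}_n(A)$ admits a universal central extension, and to prove $\mathfrak{st}_n(A)\cong\wh{\mfsl}_n(A)$ it is enough to verify that $\pi$ is a central extension and that it is universal.

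For centrality of $\Ker\pi$, I would introduce the analogue of the root-space decomposition $\mathfrak{st}_n(A)=\mft\oplus\bigoplus_{i\neq j}L_{ij}$, where $L_{ij}=\{F_{ij}(a):a\in A\}$ and $\mft$ is spanned by the brackets $[F_{ij}(a),F_{ji}(b)]$. The relations show this is a direct sum and that $\pi$ is injective on each $L_{ij}$ (the images $E_{ij}\ot a$ being linearly independent as $a$ runs over a basis of $A$), so $\Ker\pi\subseteq\mft$. A direct evaluation of $[\,[F_{ij}(a),F_{ji}(b)],F_{pq}(c)\,]$ via the relations, combined with the condition $\pi(z)=0$, then yields $[z,F_{pq}(c)]=0$ for every $z\in\Ker\pi$. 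For universality I would verify the lifting property directly: given any central extension $u\colon\mfe\to\mfsl_n(A)$, choose arbitrary linear lifts $s_{ij}(a)\in\mfe$ of $E_{ij}\ot a$ and set $\Phi(F_{ij}(a)):=[s_{ik}(a),s_{kj}(1)]$ for some $k\notin\{i,j\}$. The crucial point is that brackets of lifts are independent of the chosen lifts — two lifts differ by a central element, which is annihilated upon bracketing — so after checking independence of $k$ one shows the $\Phi(F_{ij}(a))$ satisfy the two Steinberg relations, whence $\Phi$ extends to a homomorphism with $u\circ\Phi=\pi$, unique by perfectness.

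The main obstacle is this last verification, together with the centrality computation: both reduce to repeated applications of the Jacobi identity in which one inserts an intermediate index $k$ and discards central ambiguities. This is exactly where $n\ge 3$ is indispensable, since it guarantees the auxiliary index realising each root vector as a bracket, and the genuinely technical part is the bookkeeping of the various index-coincidence cases governed by the chain conditions in the two relations.
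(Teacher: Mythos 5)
The paper itself gives no argument for this statement: it is quoted directly from Kassel--Loday \cite{KL}, so there is no internal proof to compare against. Your outline is, in substance, the proof from that source: you introduce the Steinberg Lie algebra $\mathfrak{st}_n(A)$ (correctly adding the $\C$-linearity of $a\mapsto F_{ij}(a)$, which the statement leaves implicit), map it onto $\mfsl_n(A)$ by $F_{ij}(a)\mapsto E_{ij}\ot a$, use $n\ge 3$ and unitality to get perfectness, prove centrality of the kernel via the decomposition into $\mft$ and the spaces $L_{ij}$, and establish universality by lifting along an arbitrary central extension. This is the right route and every step you name does go through. Two places deserve to be flagged as more than bookkeeping: first, to know that $\mft\oplus\bigoplus_{i\neq j}L_{ij}$ is all of $\mathfrak{st}_n(A)$ (so that $\Ker\pi\subseteq\mft$) you must also dispose of the coincidence patterns not covered verbatim by the two relations, notably $[F_{ij}(a),F_{ij}(b)]=0$, which again needs an auxiliary index and the Jacobi identity; the grading by the root lattice of $\mfsl_n$ (declaring $F_{ij}(a)$ of degree $\eps_i-\eps_j$, the relations being homogeneous) is the cleanest way to get both the spanning statement and the directness of the sum. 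Second, in the universality step the well-definedness of $\Phi(F_{ij}(a))=[s_{ik}(a),s_{kj}(1)]$ --- independence of $k$ and of the chosen lifts, and the verification of the two relations for the $\Phi(F_{ij}(a))$ --- is genuinely the crux, especially for $n=3$ where the auxiliary index cannot always be chosen disjoint from the other indices in play; you correctly identify this as the technical heart, and it is exactly what \cite{KL} carries out. So: no gap in the approach, only the expected case analysis left to execute, and it coincides with the argument of the cited reference rather than with anything proved in this paper.
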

Here $i\neq j\neq k\neq i$ means ($i\neq j$ and $j\neq k$ and $k\neq
i$). We will use this convention in this paper.

When $n=2$, one has to add generators $H_{12}(a_1,a_2)$ given bt $H_{12}(a_1,a_2) = [F_{12}(a_1),F_{21}(a_2)]$ for $a_1,a_2 \in A$, and the relations \[
[H_{12}(a_1,a_2), F_{12}(a_3)] = F_{12}(a_1a_2a_3 + a_3a_2a_1), \;\; [H_{12}(a_1,a_2), F_{21}(a_3)] = -
F_{21}(a_3 a_1a_2 + a_2a_1a_3). \] 

It is also proved in \cite{KL} that the center of $\wh{\mfsl}_n(A)$
is isomorphic, as a vector space, to the first cyclic homology group
$HC_1(A)$. For $G$ a finite group and $A=\C[G]$, $HC_1(A)=0$, but in
the double affine case below, the center will be infinite dimensional.

The following formulas taken from \cite{VaVa2} can help understand better the bracket on
$\wh{\mfsl}_n(A)$. The problem of computing explicitly the bracket with respect to the decomposition $\wh{\mfsl}_n(A) \cong \mfsl_n(A) \oplus HC_1(A)$ is, in general, a
difficult one, but it is possible to obtain some nice formulas by using a different
splitting of $\wh{\mfsl}_n(A)$. Let $\lan A,A\ran$ be the quotient
of $A\ot A$ by the two-sided ideal generated by $a_1\ot a_2 -
a_2\ot a_1$ and $a_1a_2\ot a_3 - a_1\ot a_2 a_3 - a_2\ot a_3 a_1$. The first cyclic
homology group $HC_1(A)$ is, by definition, the kernel of
the map $\lan A,A\ran \onto [A,A], \, a_1 \ot a_2 \mapsto [a_1,a_2]$.

For $m_1,m_2\in\mfsl_n$, $a_1,a_2\in A$, and $(\cdot,\cdot)$ the Killing form on
$\mfsl_n$, set :
$$[m_1,m_2]_+ = m_1m_2 + m_2m_1 - \frac{2}{n}(m_1,m_2)I\text{ ,
}[a_1,a_2]_+ = a_1a_2 + a_2a_1.$$

\begin{proposition}\cite{VaVa2}\label{UCE} The Lie algebra $\wh{\mfsl}_n(A)$ is isomorphic to the
vector space
$\mfsl_n \ot A \oplus \lan A,A\ran $ endowed with the bracket:
\begin{equation*}  [ m_1\ot a_1, m_2\ot a_2 ] =
\frac{1}{n}(m_1,m_2)\lan a_1,a_2\ran + \frac{1}{2}[m_1,m_2]\ot
[a_1,a_2]_+ + \frac{1}{2}[m_1,m_2]_+ \ot [a_1,a_2], \label{UCE1}
\end{equation*}
\begin{equation*}  [\lan a_1,a_2\ran, \lan b_1,b_2 \ran ] = \lan
[a_1,a_2], [b_1,b_2] \ran,  \label{UCE2} \end{equation*}
\begin{equation*} [ \lan a_1,a_2\ran , m_1\ot a_3 ] = m_1 \ot
[[a_1,a_2],a_3].  \label{UCE3}  \end{equation*}
\end{proposition}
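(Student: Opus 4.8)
The plan is to put the given bracket on the vector space $V = \mfsl_n \ot A \oplus \lan A,A\ran$, verify directly that this makes $V$ a Lie algebra, exhibit $V$ as a central extension of $\mfsl_n(A)$ with kernel $HC_1(A)$, and finally identify it with the universal central extension $\wh{\mfsl}_n(A)$ using the presentation of the cited theorem of \cite{KL}. The structural starting point is the shape of the ordinary commutator on $\mfgl_n(A)=\mfgl_n\ot A$. Writing $m_1m_2=\tfrac12\{m_1,m_2\}+\tfrac12[m_1,m_2]$ and $a_1a_2=\tfrac12[a_1,a_2]_+ +\tfrac12[a_1,a_2]$, a short computation gives
\[ [m_1\ot a_1, m_2\ot a_2] = \tfrac12\{m_1,m_2\}\ot[a_1,a_2] + \tfrac12[m_1,m_2]\ot[a_1,a_2]_+ . \]
Since $\{m_1,m_2\}=[m_1,m_2]_+ + \tfrac{2}{n}(m_1,m_2)I$ by the definition of $[m_1,m_2]_+$, the scalar-matrix part of this bracket is exactly $\tfrac1n(m_1,m_2)[a_1,a_2]\,I$, a scalar matrix with entry in $[A,A]$. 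This is the observation driving everything: the bracket of $V$ is obtained from that of $\mfsl_n(A)$ by replacing the scalar term $\tfrac1n(m_1,m_2)[a_1,a_2]\,I$ by its canonical lift $\tfrac1n(m_1,m_2)\lan a_1,a_2\ran$ along the surjection $\pi:\lan A,A\ran\onto[A,A]$, $\lan a_1,a_2\ran\mapsto[a_1,a_2]$, whose kernel is $HC_1(A)$ by definition.

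Next I would check that the three formulas are well defined and antisymmetric: well-definedness amounts to verifying that both the bracket formulas and the map $\pi$ respect the two defining relations of $\lan A,A\ran$, and antisymmetry follows from the symmetry of $\lan\cdot,\cdot\ran$ and of $(\cdot,\cdot)$ together with the antisymmetry of the two commutators that appear. The bulk of the work is the Jacobi identity, which I would organize according to how many of the three arguments lie in $\lan A,A\ran$. In the cases with at least one argument in $\lan A,A\ran$ the identity reduces to the Jacobi identity of $\mfsl_n$ and to associativity and the Jacobi identity of the commutator in $A$ and on $[A,A]$. The genuinely nontrivial case is that of three arguments in $\mfsl_n\ot A$, where the $\lan\cdot,\cdot\ran$-valued part of the triple bracket is controlled precisely by the cyclic relation $\lan a_1a_2,a_3\ran=\lan a_1,a_2a_3\ran+\lan a_2,a_3a_1\ran$ built into $\lan A,A\ran$, combined with the invariance of $(\cdot,\cdot)$.

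Granting the Lie bracket, the computation above shows that $HC_1(A)=\Ker\pi\subseteq\lan A,A\ran$ is central in $V$ (every bracket involving an $\lan a_1,a_2\ran$ with $[a_1,a_2]=0$ vanishes), and that the map which is the identity on $\mfsl_n\ot A$ and sends $\lan a_1,a_2\ran\mapsto[a_1,a_2]\,I$ induces a Lie algebra isomorphism $V/HC_1(A)\iso\mfsl_n(A)$. Thus $V$ is a central extension of $\mfsl_n(A)$ by $HC_1(A)$. To recognise it as the universal one, I would first note that $V$ is perfect: $\mfsl_n\ot A\subseteq[V,V]$, and any $\lan a_1,a_2\ran$ is recovered, modulo $\mfsl_n\ot A$, from a bracket $[m_1\ot a_1,m_2\ot a_2]$ with $(m_1,m_2)\neq0$. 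Universality of $\wh{\mfsl}_n(A)$ then produces a homomorphism $\phi:\wh{\mfsl}_n(A)\to V$ over $\mfsl_n(A)$, which is surjective because $V$ is perfect and which restricts to a surjection of centers $HC_1(A)\onto HC_1(A)$.

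The main obstacle is that $HC_1(A)$ may be infinite dimensional (as it is in the double affine cases that motivate this paper), so surjectivity on centers does not by itself force injectivity; I must show the induced map on $HC_1(A)$ is an isomorphism. I would do this by tracking $\phi$ on the generators of the cited theorem: sending $F_{ij}(a)\mapsto E_{ij}\ot a$ for $i\neq j$ respects the listed relations, and the central elements arising from the brackets $[F_{ij}(a),F_{ji}(b)]$ then map to explicit spanning elements of $HC_1(A)\subseteq\lan A,A\ran$, showing that $\phi$ induces the identity on $HC_1(A)$. Injectivity of $\phi$, and hence the isomorphism $\wh{\mfsl}_n(A)\cong V$, follows from the five lemma. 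For $n=2$ one repeats the argument with the augmented presentation involving the extra generators $H_{12}(a_1,a_2)$ and their relations recorded above.
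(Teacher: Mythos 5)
The paper itself contains no proof of this proposition: it is quoted from \cite{VaVa2}, with the presentation of $\wh{\mfsl}_n(A)$ and the identification of its centre with $HC_1(A)$ quoted from \cite{KL}. So your proposal cannot be measured against an argument in the text; on its own terms, its overall architecture is the natural one and most of it is sound: the rewriting of the $\mfgl_n(A)$-bracket that isolates the scalar term $\tfrac1n(m_1,m_2)[a_1,a_2]I$, the verification (modulo the routine but nontrivial Jacobi check, where the cyclic relation in $\lan A,A\ran$ is indeed the key point) that the displayed formulas define a Lie bracket on $V=\mfsl_n\ot A\oplus\lan A,A\ran$, the identification of $V$ as a perfect central extension of $\mfsl_n(A)$ with kernel $HC_1(A)=\Ker\bigl(\lan A,A\ran\onto[A,A]\bigr)$, and the existence of a surjection $\phi:\wh{\mfsl}_n(A)\onto V$ over $\mfsl_n(A)$ are all correct.

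The genuine gap is exactly at the step you yourself flag as the obstacle: injectivity of $\phi$ on the centre. Your proposed resolution does not deliver it. Exhibiting, for each element of $HC_1(A)\subset\lan A,A\ran$, an explicit central element of $\wh{\mfsl}_n(A)$ built from brackets $[F_{ij}(a),F_{ji}(b)]$ that maps onto it only re-establishes surjectivity of $\phi$ on centres, which you already had; and since $HC_1(A)$ is infinite dimensional in the cases of interest, a surjection (even one admitting a linear section) can still have nonzero kernel. Moreover, the phrase ``$\phi$ induces the identity on $HC_1(A)$'' is not meaningful with the input you allow yourself: the cited Kassel--Loday statement only asserts a vector-space isomorphism between the centre of $\wh{\mfsl}_n(A)$ and $HC_1(A)$, with no canonical identification, so there is no ``identity'' to compare with unless you import (or re-derive) the explicit Kassel--Loday isomorphism, i.e.\ their computation that the kernel of the Steinberg Lie algebra $\to\mfsl_n(A)$ is spanned precisely by such elements and is identified with $HC_1(A)$ in a prescribed way. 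To close the argument you must either reproduce that computation of the full centre of the presented algebra and check it is carried isomorphically by $\phi$, or bypass it by verifying the universal property of $V$ directly -- given an arbitrary central extension $E\onto\mfsl_n(A)$, construct a lift $V\to E$ by a cocycle argument -- which is in substance how this formula is established in \cite{KL,VaVa2}. The five-lemma conclusion is fine once, but only once, injectivity on centres is in hand. (The remaining small points -- well-definedness on $\lan A,A\ran$, the unit of $A$ used in the perfectness argument, and the $n=2$ case with the generators $H_{12}(a_1,a_2)$ -- are as you say routine.)
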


\subsection{Example : special linear Lie algebras over group rings}

Let $G$ be a finite group. One interesting case for us is the group
algebra $A=\C[G]$, in which case $HC_0(A) \cong
\C^{\oplus cl(G)}$ where $cl(G)$ is the number of conjugacy classes of
$G$.

\begin{lemma} The Lie algebra $\mfsl_n(\C[G])$ is semi-simple of Dynkin type $A$.
\end{lemma}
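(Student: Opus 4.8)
The plan is to exploit the semisimplicity of the group algebra. First I would invoke Maschke's theorem: since $G$ is finite and we work over $\C$ (characteristic zero), $\C[G]$ is a semisimple associative algebra, so the Artin--Wedderburn theorem gives an isomorphism of associative algebras $\C[G] \cong \bigoplus_{i=1}^{r} M_{d_i}(\C)$, where $r = cl(G)$ is the number of conjugacy classes and the $d_i$ are the dimensions of the irreducible complex representations of $G$.

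Next I would tensor with $\mfgl_n$. Because $\mfgl_n(A) = \mfgl_n \ot A$ and tensoring with a finite direct sum distributes, we get $\mfgl_n(\C[G]) \cong \bigoplus_{i=1}^r \mfgl_n \ot M_{d_i}(\C)$. Using the standard associative-algebra isomorphism $M_n(\C) \ot M_{d}(\C) \cong M_{nd}(\C)$, which is simultaneously an isomorphism of the underlying Lie algebras taken under the commutator bracket, each summand is identified with $\mfgl_{n d_i}(\C)$. Hence $\mfgl_n(\C[G]) \cong \bigoplus_{i=1}^r \mfgl_{n d_i}(\C)$ as Lie algebras.

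Then I would pass to derived algebras. The derived subalgebra of a finite direct sum of Lie algebras is the direct sum of their derived subalgebras, and $[\mfgl_N(\C), \mfgl_N(\C)] = \mfsl_N(\C)$ for every $N$. By the defining equality $\mfsl_n(A) = [\mfgl_n(A), \mfgl_n(A)]$, this yields $\mfsl_n(\C[G]) \cong \bigoplus_{i=1}^r \mfsl_{n d_i}(\C)$. Since $n \ge 2$ forces $n d_i \ge 2$, every summand $\mfsl_{n d_i}(\C)$ is a simple Lie algebra of Dynkin type $A_{n d_i - 1}$, so the direct sum is semisimple of type $A$, as claimed.

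There is no serious obstacle here; the only points requiring care are checking that the Wedderburn decomposition is genuinely one of algebras, so that it induces a Lie-algebra decomposition compatible with the commutator, and that forming the derived algebra commutes with the direct-sum decomposition. The essential input is simply Maschke's theorem together with the elementary matrix identity $M_n \ot M_d \cong M_{nd}$.
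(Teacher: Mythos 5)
Your proof is correct and follows essentially the same route as the paper: the Wedderburn decomposition $\C[G]\cong\bigoplus_{j} M_{d(j)}(\C)$ combined with $M_n\ot M_d\cong M_{nd}$, yielding $\mfsl_n(\C[G])\cong\bigoplus_j \mfsl_{nd(j)}$. The only difference is that you spell out the intermediate step (derived algebras commute with direct sums and $[\mfgl_N,\mfgl_N]=\mfsl_N$) which the paper leaves implicit.
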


\demo Recall that $cl(G)$ is also the number of irreducible representations of $G$.
Enumerate the irreducible representations of $G$ by $\rho_1, \ldots, \rho_{cl(G)}$
and let $d(j)$ be the dimension of $\rho_j$. Wedderburn's theorem states that the group algebra $\C[G]$ is isomorphic to
$\bigoplus_{j=1}^{cl(G)} M_{d(j)}$ as algebras where $M_{d(j)}$ is the associative algebra of $d(j)
\times d(j)$-matrices. Therefore, $\mfsl_n(\C[G]) \cong \bigoplus_{j=1}^{cl(G)}
\mfsl_{nd(j)}$.
\qed

\begin{remark} The direct sum above is a direct sum of Lie algebras, that is, two
different copies of $\mfsl_n$ commute. A non-degenerate symmetric invariant bilinear form $\ka$ on the semisimple Lie algebra $\mfsl_n(\C[G])$ is given
by the formula ($m_1,m_2\in \mfsl_n$, $\ga_1,\ga_2\in G$) :
$$\ka(m_1\ga_1,m_2\ga_2) = Tr(m_1\cdot
m_2)\delta_{\ga_1=\ga_2^{-1}}.$$ \end{remark}

\section{Cyclic affine Lie algebras}\label{cycALA}

For an arbitrary ring $R$ with action
of a finite group $G$, $R\rtimes G$ is the ring spanned by the
elements $ag, a\in R, g\in G$ with the relations $(a_1g_1)
\cdot (a_2g_2) = a_1g_1(a_2)g_1g_2$. In the previous section,
just by considering group rings (over $\C$), we ended up with
semi-simple Lie algebras. Here, when $R$ is a Laurent polynomial ring,
one can expect to obtain affine Kac-Moody algebras, which is indeed
what happens.

\subsection{Definition and decomposition}

Let $\xi$ be a generator of $\Ga$ and $\zeta$ a
primitive $d^{th}$-root of unity. Let $A = \C[u^{\pm
1}]\rtimes\Gamma$
and $B = \C[u]\rtimes\Gamma$. The action of $\Gamma$ is defined by
$\xi(u) = \zeta u$. We will be interested in
the structure of the Lie algebra $\mfsl_n(A)$
and of its universal central extension $\wh{\mfsl}_n(A)$. We will also
say a few words about
$\mfsl_n(B)$.  We will show that
$\mfsl_n(A)$ is a graded simple Lie algebra
and explain how it is related to the classification of such Lie
algebras obtained by V. Kac and O. Mathieu \cite{K,kac,Ma1,Ma2}.

In the following, $\equiv$ is the equivalence mod $d$.

\begin{lemma} We have :
$$[A,A] = \bigoplus_{i=1}^{d-1} \C[u^{\pm 1}] \xi^i
\oplus \bigoplus_{j \in \Z, j\not\equiv 0}
\C\cdot u^j\text{ and }[B,B] = \bigoplus_{i=1}^{d-1} u\C[u] \xi^i
\oplus \bigoplus_{j\Z_{\ge 1} , j\not\equiv 0} \C\cdot u^j .$$
\end{lemma}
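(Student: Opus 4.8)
The plan is to exploit the bigrading of $A$ (and of $B$) by $u$-degree together with the $\Ga$-degree. Using the homogeneous basis $u^k\xi^i$ ($k\in\Z$, $0\le i\le d-1$) and the smash-product rule $\xi^i(u^m)=\zeta^{im}u^m$, a one-line computation gives
\[
[u^k\xi^i,\,u^m\xi^j]=(\zeta^{im}-\zeta^{jk})\,u^{k+m}\,\xi^{i+j}.
\]
Since every such bracket is a scalar multiple of a single homogeneous basis vector and the bracket preserves the bidegree $(k+m,\;i+j \bmod d)$, the subspace $[A,A]$ is homogeneous. It is therefore the direct sum of those one-dimensional pieces $\C\,u^l\xi^s$ for which some choice with $k+m=l$ and $i+j\equiv s$ produces a nonzero coefficient $\zeta^{im}-\zeta^{jk}$. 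Thus everything reduces to deciding, bidegree by bidegree, whether that coefficient can be made nonzero.

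First I would treat $s\not\equiv 0$. Taking $i=s$, $j=0$, $m=1$, $k=l-1$ gives coefficient $\zeta^{s}-1\neq 0$, so $u^l\xi^s\in[A,A]$ for every $l\in\Z$; hence all of $\bigoplus_{i=1}^{d-1}\C[u^{\pm1}]\xi^i$ lies in $[A,A]$. Next, for $s\equiv 0$ and $l\not\equiv 0$, choosing $i=1$, $j=d-1$ (so $i+j\equiv 0$) and any splitting $k+m=l$ yields coefficient $\zeta^{m}-\zeta^{-k}=\zeta^{m}(1-\zeta^{-l})$, which is nonzero because $\zeta^{-l}\neq 1$; hence $u^l\in[A,A]$ whenever $l\not\equiv 0$.

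The one case requiring an actual obstruction is $s\equiv 0$ \emph{and} $l\equiv 0$, i.e. the assertion that $1,u^{\pm d},u^{\pm 2d},\dots$ do not lie in $[A,A]$. Here $i+j\equiv 0$ forces $j\equiv -i$, so the coefficient becomes
\[
\zeta^{im}-\zeta^{-ik}=\zeta^{-ik}\bigl(\zeta^{i(k+m)}-1\bigr)=\zeta^{-ik}\bigl(\zeta^{il}-1\bigr),
\]
which vanishes identically since $l\equiv 0$ makes $\zeta^{il}=1$. Hence every bracket landing in bidegree $(l\equiv 0,\,s\equiv 0)$ is zero, so these pieces survive in $HC_0(A)=A/[A,A]$. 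This is the step I expect to be the crux: it isolates exactly the $\Ga$-invariant ``trace'' directions and explains why $HC_0(A)$ is nontrivial. Combining the three cases gives the stated description of $[A,A]$.

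For $B=\C[u]\rtimes\Ga$ the same computation applies verbatim, the only new feature being the constraint $k,m\ge 0$. This has two effects. A bracket can reach $u$-degree $0$ only when $k=m=0$, which forces coefficient $\zeta^0-\zeta^0=0$, so no $\xi^s$ survives; and the explicit witnesses above have nonnegative $k,m$ precisely when $l\ge 1$ (e.g. $i=s,\,j=0,\,m=1,\,k=l-1$ for $s\not\equiv 0$, and $i=1,\,j=d-1,\,m=0,\,k=l$ for $s\equiv 0$). The vanishing in bidegree $(l\equiv 0,\,s\equiv 0)$ is unaffected. This yields $[B,B]=\bigoplus_{i=1}^{d-1}u\C[u]\xi^i\oplus\bigoplus_{j\in\Z_{\ge 1},\,j\not\equiv 0}\C\,u^j$, completing the proof.
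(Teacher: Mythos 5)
Your proposal is correct and follows essentially the same route as the paper: both rest on the single commutator formula $[u^k\xi^i,u^m\xi^j]=(\zeta^{im}-\zeta^{jk})u^{k+m}\xi^{i+j}$, produce explicit small commutators as witnesses for the inclusion $\supseteq$ (the paper uses $[u,u^{j-1}\xi^i]$ and $[\xi,u^j\xi^{-1}]$, which match your choices up to reordering), and prove $\subseteq$ by the same observation that the coefficient vanishes identically whenever the target bidegree has both $u$-degree and $\Gamma$-degree congruent to $0$ mod $d$. Your treatment of $B$ via the constraint $k,m\ge 0$ is likewise the same refinement the paper's witnesses already encode.
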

\begin{proof}  If $1\le i\le d-1$, then $u^j \xi^i =
\frac{1}{1-\zeta^i}[u,u^{j-1}\xi^i]$. If $j\in \Z$ and $j \not\equiv
0 $, then $u^j =
\frac{1}{\zeta^j-1}[\xi,u^j\xi^{-1}]$. This proves $\supseteq$. Consider $[u^{k}\xi^a, u^m\xi^b] = (\zeta^{am} - \zeta^{bk}) u^{k+m}\xi^{a+b}$ and suppose that the right-hand side is in $\C[u^{\pm 1}]^{\Ga}$. Then $k+l\equiv 0$ and $a+b\equiv 0 \, \mathrm{mod}\,(d)$, so $\zeta^{am} - \zeta^{bk}=0$. This proves $\subseteq$. \end{proof}

\begin{corollary} $HC_0(B)  \cong \C[u]^{\Ga} \oplus \C[\Ga]$.  \end{corollary}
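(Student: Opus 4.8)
The plan is to read off $HC_0(B) = B/[B,B]$ directly from the preceding lemma, exploiting the fact that its description of $[B,B]$ is homogeneous for the bigrading of $B = \bigoplus_{i=0}^{d-1}\C[u]\xi^i$ by the power of $u$ and by the exponent $i$. Indeed,
\[
[B,B] = \bigoplus_{i=1}^{d-1} u\C[u]\xi^i \oplus \bigoplus_{j\ge 1,\, j\not\equiv 0}\C\cdot u^j
\]
is a span of monomials $u^j\xi^i$, hence a graded subspace, so the quotient splits as a direct sum over the components $i=0,1,\ldots,d-1$ and I can treat each separately.

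First I would handle the component $i=0$, namely $\C[u]\cdot\xi^0 = \C[u]$. Here $[B,B]$ contributes exactly $\bigoplus_{j\ge 1,\, j\not\equiv 0}\C u^j$, so the quotient is spanned by the images of the monomials $u^j$ with $j=0$ or $j\equiv 0\ \mathrm{mod}\ d$; these are precisely the $\Ga$-invariant monomials, and this component of the quotient is canonically $\C[u]^{\Ga} = \C[u^d]$. Next, for each fixed $i$ with $1\le i\le d-1$, the relevant part of $[B,B]$ is $u\C[u]\xi^i$, i.e. all of $\C[u]\xi^i$ except the constant term; thus $\C[u]\xi^i / u\C[u]\xi^i$ is the one-dimensional space $\C\cdot\xi^i$ spanned by the image of $\xi^i$. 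Combining the two steps gives $HC_0(B)\cong\C[u]^{\Ga}\oplus\bigoplus_{i=1}^{d-1}\C\xi^i$, which I would then rewrite in the symmetric form of the statement: grouping the identity $\xi^0$ (produced inside the $i=0$ step) together with the constants $\xi^1,\ldots,\xi^{d-1}$ assembles the group-algebra summand $\C[\Ga] = \bigoplus_{i=0}^{d-1}\C\xi^i$, while the remaining invariant monomials $u^d,u^{2d},\ldots$ account for the polynomial summand.

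The computation is routine once the lemma is in hand, so the only point that will require a little care is the bookkeeping of the degree-zero terms. The identity $1 = \xi^0$ naturally sits in the $i=0$ component and is counted inside $\C[u]^{\Ga}$, whereas the stated form groups it with the other constants to form the full group algebra $\C[\Ga]$. Since $HC_0(B)$ is being described only as a vector space, this transfer of a single one-dimensional piece is harmless: the leftover invariant monomials $u^d,u^{2d},\ldots$ span the augmentation ideal $u^d\C[u^d]$, which is isomorphic as a vector space to $\C[u]^{\Ga}=\C[u^d]$, and this legitimizes writing the answer as $\C[u]^{\Ga}\oplus\C[\Ga]$.
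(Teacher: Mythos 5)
Your proof is correct and is essentially the paper's own (implicit) argument: the corollary is stated as an immediate consequence of the lemma computing $[B,B]$, and you read off $B/[B,B]$ from it componentwise exactly as intended. Your explicit bookkeeping of the constant term — moving $\xi^0$ into the group-algebra summand and observing that $u^d\C[u^d]\cong \C[u^d]=\C[u]^{\Ga}$ only as abstract vector spaces — correctly flags the one genuine subtlety, since the canonical form of the quotient is $\C[u]^{\Ga}\oplus\C^{\oplus(d-1)}$ (compare the paper's later formula for $HC_0(C)$), so the stated splitting $\C[u]^{\Ga}\oplus\C[\Ga]$ holds only up to such a non-canonical vector-space identification.
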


The Lie algebra $\mfsl_n(A)$ admits different
vector space decompositions, similar to the two standard triangular
decompositions of affine Lie algebras. Let $\mfn^+$ (resp. $\mfn^-$)
be the Lie algebra of strictly upper (resp. lower) triangular matrices
in $\mfsl_n$ (over $\C$) and let $\mfh$ be the usual Cartan subalgebra of
$\mfsl_n$. The elementary matrices in $\mfgl_n$ will be denoted
$E_{ij}$ and $I$ will stand for the identity matrix. In the following,
by abuse of notation, an element $g\otimes a$ will be denoted $ga$ for
$g\in \mfgl_n$, $a\in A$. We have the following
vector space isomorphisms (triangular decompositions) :
$$\mfsl_n(A) \cong ( \mfn^- A ) \oplus (
\mfh A \oplus I [A,A] )  \oplus ( \mfn^+ A  ), \label{1td}
$$
and $\mfsl_n(A)$ is also isomorphic to the sum
\begin{equation*}
\begin{split}
&\left(\mfsl_n\ u^{-1}\C[u^{-1}]\rtimes\Ga \oplus \left(
\bigoplus_{\stackrel{j\le -1,j\not\equiv 0}{0\leq i\le d-1}}
\C I u^j \xi^i   \oplus
\bigoplus_{1\leq i\leq d-1} I u^{-d}\C[u^{-d}]\xi^i \right) \oplus
\mfn^- \C[\Ga]
\right) \\
\oplus & \left(\mfh \C[\Ga]\oplus \bigoplus_{1\leq i\leq d-1}
\C I \xi^i \right)
\\\oplus &\left( \mfsl_n u\C[u]\rtimes\Ga \oplus \left(
\bigoplus_{\stackrel{j\ge 1, j\not\equiv 0}{0\le i\le d-1}}
\C I u^j \xi^i  \oplus \bigoplus_{1\le i \le d-1} I u^d\C[u^d]\xi^i
\right)  \oplus  \mfn^+  \C[\Ga]  \right).  \label{2td}
\end{split}
\end{equation*} These lead to similar decompositions for  $\mfsl_n(B)$. These triangular decompositions are similar to those considered, for instance, in \cite{Kh}.

The first triangular decomposition is analogous to the loop triangular
decomposition of affine Lie algebras, but in our situation the middle
Lie algebra is not commutative. The second triangular decomposition is
similar to the decomposition of affine Lie algebras adapted to
Chevalley-Kac generators, and it is of particular importance as the
middle term $\mfH$ in commutative. So the role of the Cartan
subalgebra will be played by the commutative Lie algebra $\mfH$ and
our immediate aim is to obtain a corresponding appropriate root space
decomposition of $\mfsl_n(A)$.

It will be
convenient to work with the primitive idempotents of $\Ga$, so let us
set $\mbe_j = \frac{1}{d} \sum_{i=0}^{d-1}\zeta^{-ij}\xi^i$. A vector
space basis of $\mfH$ is given by the following elements:
$$H_{i,j} = 
\begin{cases}
(E_{i,i}-E_{i+1,i+1})\mbe_j&\text{ for $1\le i\le n-1, 0\le j\le
d-1$,}
\\E_{n,n} \mbe_{j} - E_{1,1} \mbe_{j+1}&\text{ for $i = 0$, $0\le j\le d-2$.}
\end{cases}
$$
Remark : we could define $H_{0,d-1}$ in the same way, but then we  would get
$\sum_{i=0}^{n-1}\sum_{j=0}^{d-1}H_{i,j} = 0$ in $\mfsl_n(A)$ (but lifts to a non-zero
central element in $\wh{\mfsl}_n(A)$).

\begin{lemma} A basis of the eigenspaces for non-zero eigenvalues for the adjoint action of
$\mfH$ on
$\mfsl_n(A)$ (except for $\mfH$ itself) is given by the following vectors:
\[ E_{ij} u^k \mbe_l \;\; 1\le
i\neq j\le n, k\in\Z, 0\le l\le d-1,\]
\[E_{ii} u^k\mbe_l \;\;
1\le i\le n, k\not\equiv 0, 0\le l\le d-1.\]
\end{lemma}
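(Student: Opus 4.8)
The plan is to diagonalize the adjoint action of the abelian Lie algebra $\mfH$ directly on the evident monomial basis of $\mfsl_n(A)$. Recall from the splitting $\mfsl_n(A)=\mfsl_n\ot A\oplus I\ot[A,A]$ and from the fact that $\{u^k\mbe_l : k\in\Z,\ 0\le l\le d-1\}$ is a $\C$-basis of $A$ that every element of $\mfsl_n(A)$ is a combination of the off-diagonal monomials $E_{ab}\ot u^k\mbe_l$ (with $a\neq b$) together with diagonal combinations of the $E_{aa}\ot u^k\mbe_l$ whose trace lies in $[A,A]$. Since each generator $H_{i,j}$ is a diagonal matrix times a single idempotent, I would first record the two elementary identities coming from $\xi u=\zeta u\xi$, namely $\mbe_m\,u^k\mbe_l=\delta_{m,l+k}\,u^k\mbe_l$ and $u^k\mbe_l\,\mbe_m=\delta_{l,m}\,u^k\mbe_l$ (indices mod $d$); these are what force every monomial to be returned to itself under the bracket.

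Using these together with $[M\ot a,N\ot b]=MN\ot ab-NM\ot ba$, I would compute, for $H=(E_{pp}-E_{p+1,p+1})\mbe_m$,
\[
[H,\,E_{ab}\ot u^k\mbe_l]=\bigl[(\delta_{pa}-\delta_{p+1,a})\delta_{m,l+k}-(\delta_{pb}-\delta_{p+1,b})\delta_{m,l}\bigr]\,E_{ab}\ot u^k\mbe_l,
\]
and an analogous scalar for the generators $H_{0,j}=E_{nn}\mbe_j-E_{11}\mbe_{j+1}$. The key structural point, which is exactly what departs from the classical commutative situation, is that the two terms of the bracket are governed by the two \emph{different} selection rules $\delta_{m,l+k}$ and $\delta_{m,l}$. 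In particular each monomial is an eigenvector, so every listed vector is a simultaneous $\mfH$-eigenvector.

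Next I would check that the listed vectors have nonzero eigenvalues. For a diagonal vector $E_{aa}\ot u^k\mbe_l$ with $k\not\equiv 0$, choosing a generator with $m\equiv l+k$ yields eigenvalue $\delta_{m,l+k}-\delta_{m,l}=1-0=1$ (here $k\not\equiv0$ forces $l+k\not\equiv l$), so its weight is nonzero; this is precisely why the constraint $k\not\equiv 0$ must appear in the statement. For an off-diagonal $E_{ab}\ot u^k\mbe_l$ with $a\neq b$, the restriction of the weight to the $k\equiv 0$, $m=l$ part of $\mfH$ recovers a nonzero root of $\mfsl_n$ paired against the simple coroots, so again some $H_{i,j}$ acts nontrivially. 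Conversely, the same formulas show that every diagonal monomial with $k\equiv 0$ is annihilated by all of $\mfH$, hence lies in the zero weight space.

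Finally I would establish completeness. The off-diagonal monomials exhaust the off-diagonal part of $\mfsl_n(A)$; the diagonal monomials with $k\not\equiv 0$ all lie in $\mfsl_n(A)$ because $u^k\mbe_l\in[A,A]$ for $k\not\equiv 0$ by the preceding lemma, and they exhaust the diagonal part in those degrees. The remaining diagonal elements, those with $k\equiv 0$, constitute the zero weight space: the degree-zero ones ($k=0$) form $\mfH$ itself, of dimension $nd-1$, matching the count of the $H_{i,j}$ after the relation noted in the Remark, while the $u^{dm}$-diagonal elements with $m\neq 0$ account for the rest of the kernel. Thus the listed vectors form a basis of the sum of all nonzero eigenspaces, which is the assertion. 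The only genuine obstacle is this last bookkeeping step: tracking, via the trace-in-$[A,A]$ criterion of the earlier lemma, exactly which diagonal combinations belong to $\mfsl_n(A)$ and verifying that the zero weight space is $\mfH$ enlarged by the $u^{dm}$-diagonals; the eigenvector computation itself is routine delta-calculus.
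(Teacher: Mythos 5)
Your proposal is correct and follows essentially the same route as the paper: you diagonalize the adjoint action of the generators $H_{i,j}$ on the monomial basis via the idempotent relations $\mbe_m u^k\mbe_l=\delta_{m\equiv l+k}u^k\mbe_l$ and $u^k\mbe_l\mbe_m=\delta_{l\equiv m}u^k\mbe_l$, which is exactly the delta-function computation displayed in the paper's proof. Your additional bookkeeping (nonvanishing of the eigenvalues, membership of $E_{ii}u^k\mbe_l$ in $\mfsl_n(A)$ for $k\not\equiv 0$ via the earlier lemma on $[A,A]$, and identification of the zero weight space) only makes explicit what the paper leaves implicit.
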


\begin{proof} This is a consequence of the following simple computations:
For $1\le i\neq j\le n, 0\le a\le n-1, k\in\Z, 0\le l\le d-1, 0\le b\le d-1$,
$$[H_{a,b}, E_{ij} u^k
\mbe_l ]  =  
\begin{cases} \big( \delta_{b \equiv l}(\delta_{a+1\equiv j} - \delta_{a\equiv j})
+ \delta_{b-k \equiv l}(\delta_{a\equiv i} - \delta_{a+1\equiv i}) \big)
E_{ij} u^k\mbe_l&\text{ if $a\neq 0$,}
\\(
\delta_{n\equiv i}\delta_{b-k \equiv l} - \delta_{n\equiv j}\delta_{b
\equiv l} - \delta_{1\equiv i}\delta_{b + 1 -k \equiv l} - \delta_{j\equiv
1}\delta_{b + 1 \equiv l} )   E_{ij} u^k \mbe_l&\text{ if $a = 0$.}
\end{cases}
$$
For $1\le i\le n, k \not\equiv 0, 0\le l\le d-1$,
$$ [H_{a,b}, E_{ii}  u^k\mbe_l ]   =  
\begin{cases}
\big( \delta_{a\equiv i} -
\delta_{a+1\equiv i}\big) \big( \delta_{b-k\equiv l}-\delta_{b\equiv
l}\big) E_{ii}  u^k \mbe_l &\text{ if $a\neq 0$,}
\\ \big( \delta_{n\equiv i} \big(\delta_{b-k \equiv l} - \delta_{b \equiv l} \big)  -
\delta_{i \equiv 1} ( \delta_{b + 1 -k \equiv l} -
\delta_{b +1 \equiv l} ) \big) E_{ii}  u^k \mbe_l &\text{ if $a = 0$.}
\end{cases}
$$

\end{proof}

\subsection{Derivation element and imaginary roots}

We will introduce the real, imaginary roots and root spaces later
after adding a derivation $\msd$ to $\wh{\mfsl}_n(A)$.

The center of the universal central extension $\wh{\mfsl}_n(A)$ of
$\mfsl_n(A)$ is
isomorphic to $HC_1(A)$ \cite{KL}. It is known
that \[  HC_1(A) \cong \frac{(\C[u^{\pm
1}]du)^{\Ga}}{d(\C[u^{\pm 1}]^{\Ga})},  \] the quotient of the space
of $\Ga$-invariant $1$-forms on $\C^{\times}$ by the space of exact
$1$-forms coming from $\Ga$-invariant Laurent polynomials. This can be
deduced from the isomorphism $A\cong M_d(\C[t^{\pm 1}])$ - see proposition \ref{isoloop}. Thus this cyclic homology group is
one dimensional, with basis given by $u^{-1}du$, which we denote as
usual by $\msc$.

\begin{definition} The cyclic affine Lie algebra $\ol{\mfsl}_n(A)$ is
obtained from $\wh{\mfsl}_n(A)$ by adding a
derivation $\msd$ that satisfies the relations $ [\msd,
E_{ij} u^{k} \mbe_l] = k E_{ij} u^{k} \mbe_l$.
\end{definition}

Set $\ol{\mfH} =
\mfH \oplus \C\cdot \msc \oplus \C\cdot \msd$. We can now introduce
the roots as appropriate elements of
$$\ol{\mfH}^*_0 = \{\lambda\in \ol{\mfH}^*|\lambda(\msc) = 0\}.$$
The real root spaces
are spanned by the root vectors $E_{ij} u^k \mbe_l$ where $0\le l\le d-1$, 
$1\le i, j\le n$, $k\in\Z$, with the condition $k\not\equiv 0$ if $i = j$.

\noindent The imaginary root spaces
are spanned by the following root vectors:
$$ H_{i}  u^{kd} \mbe_l\text{ for } 1\le i\le n-1, k \neq 0,
0\le l\le d-1,$$
$$ E_{nn}  u^{kd}\mbe_l - E_{11} u^{kd}\mbe_{l+1}\text{ for } k\neq 0, 0\le l\le d-2.$$

     We want to identify the root lattice as a lattice in
$\ol{\mfH}^*_0$. Let us introduce the elements $\eps_{i,l}\in
\ol{\mfH}^*_0, 1\le i\le n, 0\le l\le d-1$ by setting
$$\eps_{i,l}(H_{a,b}) = (\delta_{a= i} - \delta_{a+1= i}
)\delta_{b \equiv l},\;  \eps_{i,l}(H_{0,b}) = \delta_{n = i} \delta_{b \equiv l} -
\delta_{i = 1}\delta_{b +1 \equiv l},$$
and $\eps_{i,l}(\msd)=0$.

\begin{definition}The
real roots are $\eps_{i,k+l} - \eps_{j,l} + k\delta$ for $1\le i,
j\le n,  0 \le l \le d-1, k\in\Z$ with $i\neq j$ or, if $i=j$, then $k\equiv 0$; the imaginary ones are $kd\delta$
where $\delta(\msd)=1, \delta(H_{a,b})=0$ and $k\neq 0$.
\end{definition}
They generate a
lattice - the root lattice - in $\ol{\mfH}^*_0$.  As one can verify,
the real root spaces all have dimension one.

\begin{lemma} The root lattice is freely generated by the following
roots, which we
will call the positive simple roots:  $ \eps_{i,l} - \eps_{i+1,l} $
for $1\le i\le n-1,  0\le l \le d-1$, $ \eps_{n,l} - \eps_{1,l+1} $ for $0\le l\le d-2$,
and $(\eps_{n,d-1} -
\eps_{1,0}) + \delta$. \end{lemma}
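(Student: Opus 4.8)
\emph{The plan is to work entirely inside $\ol{\mfH}^*_0$.} This space has dimension $nd$ (one less than $\dim\ol{\mfH}=nd+1$, because of the single constraint $\lambda(\msc)=0$), and the proposed list has exactly $(n-1)d+(d-1)+1=nd$ elements. So it suffices to establish three things: that each listed element lies in the root lattice $Q$, that the listed elements span $Q$ over $\Z$, and that they are linearly independent over $\mathbb{Q}$. The last two together force them to be a free $\Z$-basis of $Q$, since a $\mathbb{Q}$-independent, $\Z$-spanning subset of a lattice is automatically a basis; this reduction is what makes the counting work in our favour.

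For independence I would first record the single linear relation $\sum_{i,l}\eps_{i,l}=0$ in $\ol{\mfH}^*_0$ (verified by evaluating on the basis $H_{a,b}$ and on $\msd$), so that the $nd$ elements $\eps_{i,l}$ span the $(nd-1)$-dimensional subspace $\mfH^{*}\subset\ol{\mfH}^*_0$ of functionals vanishing on $\msc$ and $\msd$. I then arrange the $nd$ pairs $(i,l)$ cyclically in the order $(1,0),(2,0),\dots,(n,0),(1,1),\dots,(n,d-1)$; the elements $\eps_{i,l}-\eps_{i+1,l}$ and $\eps_{n,l}-\eps_{1,l+1}$ are precisely the differences of $\eps$ at consecutive nodes of this cycle, i.e.\ the $nd-1$ edges of a spanning tree on the $nd$ nodes, and such differences span the full difference space $\mfH^{*}$. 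Since the remaining element $(\eps_{n,d-1}-\eps_{1,0})+\delta$ has nonzero $\delta$-component, all $nd$ elements span $\ol{\mfH}^*_0=\mfH^{*}\oplus\C\delta$, and being $nd$ spanning vectors in an $nd$-dimensional space they are linearly independent.

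For membership and spanning I would use the same cyclic picture. Each $\eps_{i,l}-\eps_{i+1,l}$ is the root of $E_{i,i+1}\mbe_l$ and so lies in $Q$; the other two families require tracking the $u$-degree (the coefficient of $\delta$) produced whenever a path wraps around the cycle, since $\Ga$ acts by $\xi(u)=\zeta u$ and the smallest imaginary root is $d\delta$ rather than $\delta$. Concretely, a real root $\eps_{i,\overline{k+l}}-\eps_{j,l}+k\delta$ is a telescoping sum of consecutive simple roots along the path from node $(j,l)$ to node $(i,\overline{k+l})$; this produces the correct $\eps$-part together with a definite $\delta$-contribution coming from the wrap-around generator, and the residual freedom in $k$ within its residue class modulo $d$ is absorbed by adding copies of the imaginary root $d\delta$. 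The imaginary roots $kd\delta$ themselves are integer multiples of the sum of all the listed generators. Finally, to see that each boundary element genuinely lies in $Q$, I would exhibit it as an explicit integer combination of actual roots (for instance using $E_{n1}u^{-1}\mbe_{l+1}$) together with imaginary roots to account for the $\delta$-shifts.

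\emph{The main obstacle is exactly this $\delta$-bookkeeping:} one must separate the $u$-grading recorded by $\delta$ from the affine grading whose null root is $d\delta$, and ensure that the wrap-around edges carry precisely the right multiple of $\delta$ so that both the real roots with arbitrary $k\in\Z$ and the imaginary roots $kd\delta$ are captured without enlarging the lattice. An alternative that avoids the hand bookkeeping is to invoke the isomorphism $\mfsl_n(A)\cong\mfsl_{nd}(\C[t^{\pm1}])$ of proposition \ref{isoloop}: the root lattice is then the standard $A_{nd-1}^{(1)}$ lattice, free of rank $nd$ on its simple roots, and the only remaining task is to re-express those simple roots in the coordinates $\eps_{i,l},\delta$ — which again amounts to translating the $t$-grading into the $u$-grading.
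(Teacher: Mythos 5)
Your proposal follows the same skeleton as the paper's (very terse) proof: the key identity is that the sum of the $nd$ listed elements equals $\delta$, and the count $nd=\dim\ol{\mfH}^*_0$ is meant to do the rest. The genuine gap sits exactly at the point you yourself flag as the main obstacle, namely the membership of the boundary elements in the root lattice $Q$, and the mechanism you propose cannot close it. The root attached to $E_{n1}u^{-1}\mbe_{l+1}$ is $\eps_{n,l}-\eps_{1,l+1}-\delta$, not $\eps_{n,l}-\eps_{1,l+1}$, and the only imaginary roots are $kd\delta$, so they move the $\delta$-coefficient in steps of $d$ and cannot absorb a shift by $1$. No other combination of roots can do the job either: every real root has the form $\eps_{i,\ol{l+k}}-\eps_{j,l}+k\delta$, so its $\delta$-coefficient is congruent modulo $d$ to the shift in the second index, and since the only relation among the $\eps_{i,l}$ is $\sum_{i,l}\eps_{i,l}=0$, any $\Z$-combination of roots which is proportional to $\delta$ must lie in $\Z d\delta$ (first the formal $\eps$-parts are forced to cancel exactly, then the congruence forces the $\delta$-coefficient into $d\Z$). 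Hence $\delta\notin Q$, the elements $\eps_{n,l}-\eps_{1,l+1}$ and $(\eps_{n,d-1}-\eps_{1,0})+\delta$ do not lie in $Q$ when $d\ge 2$, and the lattice generated by the listed elements contains $Q$ with index $d$ rather than coinciding with it.

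So the missing step does not just need more care, it fails; note that the paper's own proof is silent on it (it only records the identity for $\delta$ and the count), so you correctly identified the crux, but the $\delta$-bookkeeping cannot be made to work with these generators. Your alternative route makes this visible: transporting the simple roots of $\mfsl_{nd}(\C[t^{\pm 1}])$ through proposition \ref{isoloop} yields $\eps_{i,l}-\eps_{i+1,l}$, $\eps_{n,l}-\eps_{1,l+1}-\delta$ (the roots of $E_{i,i+1}\mbe_l$ and $E_{n1}u^{-1}\mbe_{l+1}$) and $\eps_{n,d-1}-\eps_{1,0}+(2d-1)\delta$ (the root of $E_{n1}u^{2d-1}\mbe_0$), whose sum is the smallest imaginary root $d\delta$, not $\delta$; these $\delta$-shifted elements do freely generate $Q$, and your spanning-tree and wrap-around argument works verbatim for them, because then each column-changing edge carries $-\delta$ and the telescoped path automatically lands in the correct residue class before correcting by $d\delta$. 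Two smaller points: in the independence half, the relation $\sum_{i,l}\eps_{i,l}=0$ only bounds the rank of the $\eps_{i,l}$ from above, so you still need a line showing they span the full $(nd-1)$-dimensional space (for instance via the identification of $\mfH$ with the diagonal Cartan of $\mfsl_{nd}$); and "adding copies of $d\delta$" only reaches an arbitrary admissible $k$ if the base path already has the right $\delta$-coefficient modulo $d$, which is precisely what goes wrong with the unshifted generators.
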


\begin{proof} Indeed, note that  \[ \delta = \big( (\eps_{n,d-1} -
\eps_{1,0}) + \delta \big) +  \sum_{l=0}^{d-1}   \sum_{i=1}^{n-1} (\eps_{i,l} - \eps_{i+1,l} )
+ \sum_{l=0}^{d-2} (\eps_{n,l} - \eps_{1,l+1} )  \]
The set of simple roots contains $nd$ elements, which is also the dimension of $\ol{\mfH}^*_0$.
\end{proof}

\subsection{Cyclic affine Lie algebras and affine Lie algebras}

\begin{proposition}\label{grsimple}
The Lie algebra $\mfsl_n(A)$ is graded
simple (i.e. it contains no non trivial graded ideal).
\end{proposition}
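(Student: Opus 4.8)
The plan is to show that any nonzero graded ideal $\mfJ$ of $\mfsl_n(A)$ must coincide with the whole Lie algebra, where the grading is the natural one coming from the $\ol{\mfH}$-eigenspace (root space) decomposition established in the previous lemmas. First I would fix a nonzero homogeneous element $x$ lying in $\mfJ$. Because the ideal is graded, I may assume $x$ sits in a single root space: either a real root space, spanned by some $E_{ij}u^k\mbe_l$ (with $i\neq j$, or $i=j$ and $k\not\equiv 0$), or else in the ``Cartan'' part, or in an imaginary root space. The strategy is to reduce every case to producing a single off-diagonal root vector $E_{ij}u^k\mbe_l$ with $i\neq j$ inside $\mfJ$, since from one such vector the whole algebra is generated by brackets.

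The heart of the argument is a bracketing computation using the presentation of $\wh{\mfsl}_n(A)$ via the generators $F_{ij}(a)$ (the theorem of \cite{KL}) together with the explicit adjoint-action formulas for $\mfH$ computed in the preceding lemma. If $x = E_{ij}u^k\mbe_l$ with $i\neq j$, then for appropriate choices of indices the relation $[F_{ab}(a_1),F_{bc}(a_2)] = F_{ac}(a_1a_2)$ lets me multiply by elements $E_{pq}\mbe_m$ and $u^{\pm 1}$ to move to any other off-diagonal position $(i',j')$, any power $u^{k'}$, and any idempotent $\mbe_{l'}$. Concretely, bracketing $E_{ij}u^k\mbe_l$ with suitable $E_{jq}\mbe_m$ and with $E_{pi}u^{\pm 1}\mbe_{m'}$ reaches all real root vectors, and then bracketing two real root vectors $E_{ij}(\cdots)$ and $E_{ji}(\cdots)$ produces the diagonal and imaginary part as well. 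Here I would use that $\mfsl_n(\C)$ is simple and that $\Ga$-averaging via the idempotents $\mbe_j$ recovers all powers of $\xi$, so the full smash product $\C[u^{\pm1}]\rtimes\Ga$ is swept out in the $A$-component.

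The remaining cases reduce to the generic one. If $x$ lies in an imaginary root space, say $x = H_i u^{kd}\mbe_l$ with $k\neq 0$, then I bracket it with an off-diagonal generator $E_{jq}\mbe_m$ on which $H_i$ acts nontrivially; since $u^{kd}$ is invertible in $\C[u^{\pm1}]$ the resulting element is a nonzero real root vector, returning to the previous paragraph. Similarly, if $x$ is a nonzero element of the Cartan-type part $\mfH$ (the only subtle subcase, since $\mfH$ is abelian and self-bracketing gives nothing), I bracket $x$ against a well-chosen real root vector $E_{ij}\mbe_m$; by the eigenvalue formulas the bracket is a nonzero scalar multiple of that root vector precisely when the relevant $\delta$-conditions separate the components of $x$, and I can always arrange indices so that at least one component contributes, yielding a real root vector in $\mfJ$.

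The main obstacle I anticipate is this last reduction from the Cartan part: because $\mfH$ is genuinely commutative in this setting (unlike the noncommutative middle term of the first triangular decomposition), I cannot use an internal bracket, and I must check that no nonzero combination of the $H_{i,j}$ annihilates every off-diagonal root vector simultaneously. Equivalently, I need that the real roots $\eps_{i,k+l}-\eps_{j,l}+k\delta$ span $\ol{\mfH}_0^*$, which follows from the preceding lemma identifying the $nd$ simple roots as a basis of $\ol{\mfH}_0^*$; pairing $x\in\mfH$ against these roots shows $x$ acts nontrivially on some root vector. Once that separation is in hand, the case analysis closes and $\mfJ = \mfsl_n(A)$, so the algebra is graded simple. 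I expect the bookkeeping of the idempotents $\mbe_j$ and the mod-$d$ index shifts, rather than any conceptual difficulty, to be where the care is needed.
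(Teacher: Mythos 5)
Your proposal follows essentially the same route as the paper's proof: use the grading together with stability under the adjoint action of $\mfH$ to reduce a nonzero graded ideal to one containing a root vector, check that any real root vector generates all of $\mfsl_n(A)$, and reduce the imaginary (and Cartan) cases to the real one. The only cosmetic difference is that you invoke the Kassel--Loday presentation of $\wh{\mfsl}_n(A)$, which is unnecessary (the bracket identities $[E_{ij}a_1,E_{jk}a_2]=E_{ik}a_1a_2$ hold directly in $\mfsl_n(A)$, and that presentation concerns the central extension and $n\ge 3$), while you spell out the Cartan-part reduction that the paper leaves implicit; neither point changes the argument.
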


\begin{proof}
Suppose that $\wt{I} = \sum_{m\in\Z} \wt{I}_m$ is a non-zero graded ideal of
$\mfsl_n(\C[u^{\pm
1}]\rtimes\Gamma)$. Then $\wt{I}$ is stable under the adjoint action of $\mfH$,
hence each graded piece $\wt{I}_m$ must decompose into the direct sum of
all the root spaces contained
in $\wt{I}_m$. It can be checked that the ideal generated by any real root vector
is the whole $\mfsl_n(A)$, so if $\wt{I}$ contains
a real root vector, then $\wt{I}$ is the whole Lie algebra. Moreover, if $\wt{I}$ contains
an imaginary root vector, then it contains also a real one.
\end{proof}

However, the Lie algebra $\mfsl_n(A)$
is not simple if we do not take the grading into account, as can be
seen from proposition
\ref{isoloop} below. A conjecture of V. Kac \cite{K}, proved in general by
O. Mathieu \cite{Ma1,Ma2}, gives a classification of graded simple
Lie algebras of polynomial growth according to which, following
proposition \ref{grsimple}, $\mfsl_n(A)$ must
be isomorphic to a (perhaps twisted) loop algebra. This is indeed the case,
although the isomorphism does not respect the natural grading on the
loop algebra $\mfsl_{nd}\ot \C[t^{\pm 1}]$.

\begin{proposition}\label{isoloop}
The Lie algebras $\mfsl_n(A)$ and
$\mfsl_{nd}(\C[t^{\pm 1}])$ are isomorphic.
\end{proposition}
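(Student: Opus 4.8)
The plan is to reduce everything to a single associative-algebra isomorphism $A \cong M_d(\C[t^{\pm 1}])$ with $t = u^d$, from which the Lie-algebra statement follows formally. First I would observe that the invariant subring is $\C[u^{\pm 1}]^{\Ga} = \C[u^{\pm d}]$, so setting $t = u^d$ makes $\C[u^{\pm 1}]$ a free module of rank $d$ over $\C[t^{\pm 1}]$ with basis $1, u, \dots, u^{d-1}$. Let $A$ act on $\C[u^{\pm 1}]$ by letting $u$ act as multiplication by $u$ and $\xi$ act by $\xi(u^m) = \zeta^m u^m$; the defining relation $\xi u = \zeta u \xi$ of $A$ is respected, so this is a genuine left $A$-module structure. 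Since $\xi$ commutes with multiplication by $t = u^d$ (because $\zeta^d = 1$) and so does multiplication by $u$, the entire action is $\C[t^{\pm 1}]$-linear, yielding an algebra homomorphism $\Phi \colon A \to \End_{\C[t^{\pm 1}]}(\C[u^{\pm 1}]) \cong M_d(\C[t^{\pm 1}])$.

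Next I would check that $\Phi$ is bijective. For injectivity, if $a = \sum_{i=0}^{d-1} f_i(u)\xi^i$ annihilates every $u^m$, then $\sum_i \zeta^{im} f_i(u) = 0$ for all $m$; taking $m = 0, 1, \dots, d-1$ and inverting the Vandermonde matrix $(\zeta^{im})$ forces every $f_i = 0$, so $\Phi$ is injective. For surjectivity I would use the primitive idempotents $\mbe_j = \frac{1}{d}\sum_{i=0}^{d-1}\zeta^{-ij}\xi^i$: a direct computation gives $\mbe_j(u^m) = \delta_{m \equiv j} u^m$, so in the chosen basis $\Phi(\mbe_j)$ is the diagonal matrix unit $E_{jj}$, while $\Phi(u)$ is the shift sending $u^j \mapsto u^{j+1}$ for $j \le d-2$ and $u^{d-1} \mapsto t \cdot 1$. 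Multiplying $\Phi(u)$ on the left and right by the $\Phi(\mbe_j)$ produces the subdiagonal units $E_{j+1,j}$, and composing with $\Phi(u^{-d}) = t^{-1} I$ recovers the remaining corner $E_{0,d-1}$; products of these elementary matrices together with multiplication by the central $t^{\pm 1} = u^{\pm d}$ generate all of $M_d(\C[t^{\pm 1}])$. Hence $\Phi$ is an isomorphism of associative algebras.

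Finally I would transport this to Lie algebras. Tensoring with $\mfgl_n$ gives associative-algebra isomorphisms $\mfgl_n(A) = \mfgl_n \ot A \cong M_n(M_d(\C[t^{\pm 1}])) \cong M_{nd}(\C[t^{\pm 1}]) = \mfgl_{nd}(\C[t^{\pm 1}])$, where the middle step is the standard identification $M_n(M_d(R)) \cong M_{nd}(R)$. An isomorphism of associative algebras is in particular an isomorphism of the underlying Lie algebras under the commutator, and therefore carries derived subalgebra to derived subalgebra; since $\mfsl_m(R)$ is by definition $[\mfgl_m(R), \mfgl_m(R)]$, we conclude $\mfsl_n(A) \cong \mfsl_{nd}(\C[t^{\pm 1}])$. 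The only real content lies in the construction and verification of $\Phi$ in the first two paragraphs; I expect the surjectivity bookkeeping — pinning down the explicit matrix form of $\Phi(u)$ and recovering all elementary matrices over $\C[t^{\pm 1}]$ — to be the one point requiring care, after which the passage through $\mfgl$ and to the derived subalgebra is purely formal. I would also note, as the statement warns, that this isomorphism scrambles the $u$-grading on $\mfsl_n(A)$, so it is not grading-preserving for the obvious degree assignments.
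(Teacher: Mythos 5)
Your proposal is correct and follows essentially the same route as the paper: the paper's proof also rests on the algebra isomorphism $A \cong M_d(\C[t^{\pm 1}])$ (given there by $E_{l+r,l}t^k \leftrightarrow u^{kd+r}\mbe_l$, which is exactly the map your $\Phi$ induces on the basis $1,u,\dots,u^{d-1}$), from which the explicit Lie-algebra formulas are read off. You simply make explicit the verification (module construction, Vandermonde injectivity, surjectivity via the idempotents $\mbe_j$ and the shift matrix) that the paper leaves to the reader, and the final passage through $\mfgl_n$ and the derived subalgebra is the same formal step.
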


\begin{proof}
An isomorphism is given explicitly by the following formulas: If $0\leq l\leq d-1$,
$k\in\Z$,
$-l\le r\le d-l-1$, $1\le i\neq j\le n$,
\[
\begin{split}E_{n(l+r)+i, nl+j} t^k
  & \leftrightarrow   E_{ij} u^{kd+r}\mbe_l,
\\ E_{n(l+r)+i,nl+i}
t^k  & \leftrightarrow  E_{ii}  u^{kd+r} \mbe_l \mbox{ for } r\neq 0,
\\(E_{nl+i,nl+i}-E_{nl+i+1,nl+i+1})  t^k & \leftrightarrow H_i  u^{kd}
\mbe_l \mbox{ for } i\neq n,
\\(E_{nl,nl} -
E_{nl+1,nl+1})  t^k & \leftrightarrow  E_{nn} u^{kd}\mbe_l - E_{11}
u^{kd}\mbe_{l+1} \mbox{ for } l\neq 0.
\end{split} \]
These formulas can be obtained via the algebra isomorphism $A \cong M_d(\C[t^{\pm 1}])$ which is given by $E_{l+r,l} t^k  \leftrightarrow u^{kd+r} \mbe_l$. \end{proof}

Let $\phi:\mfsl_{nd}(\C[t^{\pm 1}]) \iso \mfsl_n(\C[u^{\pm 1}]\rtimes\Ga)$ be the isomorphism
in the proof of proposition \ref{isoloop}. We can put a grading on $\mfsl_{nd}$ by giving
$E_{ij}$ degree $j-i$. This induces another grading on $\mfsl_{nd}(\C[t^{\pm 1}])$,
besides the one coming from the powers of $t$: the total of these two is the grading given by $deg(E_{ij}(t^r))=j-i+r$. 

\begin{proposition} $\phi$ is an isomorphism of graded Lie algebras when $\mfsl_{nd}(\C[t^{\pm 1}])$ is endowed with the total grading and $\mfsl_n(\C[u^{\pm 1}]\rtimes\Ga)$ is graded in terms of powers of $u$.
\end{proposition}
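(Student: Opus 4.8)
The plan is to exploit that $\phi$ is \emph{already} known, by Proposition \ref{isoloop}, to be an isomorphism of Lie algebras; only its compatibility with the two gradings remains to be checked. Since both the total degree on $\mfsl_{nd}(\C[t^{\pm 1}])$ and the $u$-degree on $\mfsl_n(\C[u^{\pm 1}]\rtimes\Ga)$ are additive under the Lie bracket, a graded map is obtained as soon as $\phi$ carries each element of a fixed homogeneous spanning set to a homogeneous element of the same degree. I would take as spanning set the vectors $E_{ab}t^k$ (equivalently, the four families appearing in the proof of Proposition \ref{isoloop}), on which both $\phi$ and the two gradings are given by explicit formulas; gradedness on all of $\mfsl_{nd}(\C[t^{\pm 1}])$ then follows because $\phi$ is a Lie homomorphism and degrees add under brackets.

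The efficient way to organise the verification is to pass through the associative algebra isomorphism $A\cong M_d(\C[t^{\pm 1}])$ underlying $\phi$, namely $u^{kd+r}\mbe_l \leftrightarrow E_{l+r,l}t^k$. Equip $A$ with its grading by powers of $u$ (so that $\deg u = 1$ and $\deg \xi = 0$); since $t=u^d$, this transports to the grading of $M_d(\C[t^{\pm 1}])$ in which $t$ has degree $d$ and the matrix unit $E_{l+r,l}$ sits in degree $r$. Tensoring with $\mfsl_n$ placed in degree $0$ and identifying the two sides via $\mfgl_n\ot M_d\cong\mfgl_{nd}$, one reads off the grading that the $u$-grading induces on $\mfsl_{nd}(\C[t^{\pm 1}])$ through $\phi^{-1}$; comparing it with the total grading is then a matter of bookkeeping, to be carried out on each of the four families $E_{ij}u^{kd+r}\mbe_l$ (with $i\neq j$), $E_{ii}u^{kd+r}\mbe_l$ (with $r\neq 0$), $H_i u^{kd}\mbe_l$, and $E_{nn}u^{kd}\mbe_l-E_{11}u^{kd}\mbe_{l+1}$.

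Concretely, for each basis vector I would compute the power of $u$ occurring in its $\phi$-image from the formulas of Proposition \ref{isoloop}, express it in terms of the matrix indices $a=n(l+r)+i$, $b=nl+j$ and the exponent $k$, and confirm that it coincides with the total degree of the preimage $E_{ab}t^k$. The off-diagonal family is the model case; the diagonal family with $r\neq 0$ and the two Cartan families then follow by the same accounting, the only genuine care being needed in how the diagonal part (the $\mfH$-piece) is split into the last two families, and in the wrap-around $l+r\ge d$, where $u^{l+r}$ absorbs an extra factor $t=u^d$ so that the exponent $k$ must be adjusted accordingly.

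The main obstacle is precisely this index bookkeeping across the block structure. Because $\phi$ is only a Lie-algebra (not an associative) isomorphism, the Cartan directions do not match up naively unit by unit: the individual $E_{aa}$ are not in $\mfsl_{nd}$, so one must reorganise the diagonal of $\mfsl_{nd}$ into the differences appearing in the last two families and track how the block index and the power of $t$ recombine into a single power of $u$. Once the degree identity has been checked on this spanning set, additivity of both gradings under the bracket upgrades it to all of $\mfsl_{nd}(\C[t^{\pm 1}])$, establishing that $\phi$ is an isomorphism of graded Lie algebras.
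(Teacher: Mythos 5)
Your strategy---reduce to a degree check on the explicit spanning set of Proposition \ref{isoloop}, organised through the algebra isomorphism $A\cong M_d(\C[t^{\pm 1}])$---is exactly the verification the paper leaves implicit (no proof is given there), and your transport computation is correct: the grading by powers of $u$ corresponds on $M_d(\C[t^{\pm 1}])$ to $\deg t=d$ and $\deg E_{l+r,l}=r$, hence on $\mfsl_{nd}(\C[t^{\pm 1}])$ to $\deg(E_{ab}t^{k})=dk+\bigl(\lfloor (a-1)/n\rfloor-\lfloor (b-1)/n\rfloor\bigr)$, i.e.\ $d$ times the power of $t$ plus the difference of \emph{block} indices, with the $\mfsl_n$-factor in degree $0$. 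The gap is the step you defer as ``bookkeeping'' and assert to succeed: that transported grading is \emph{not} the total grading $\deg(E_{ab}t^{k})=b-a+k$, and the asserted coincidence fails. Concretely, the $u$-degree of the image of $E_{n(l+r)+i,\,nl+j}t^{k}$ is $kd+r$, while its total degree is $j-i-nr+k$; multiplying by $t$ raises the total degree by $1$ but the $u$-degree of the image by $d$ (so already $\deg t=d$ versus $\deg t=1$ is a contradiction for $d>1$), and even for the $t$-free part one has, e.g., $\phi(E_{12})=E_{12}\mbe_0$ of $u$-degree $0$ although $E_{12}$ has total degree $1$. In fact no degree-preserving isomorphism between the two gradings can exist when $d>1$: every homogeneous piece of the total grading has dimension $n^{2}d^{2}-1$, whereas the $u$-graded pieces of $\mfsl_n(\C[u^{\pm 1}]\rtimes\Ga)$ have dimension $n^{2}d$ or $n^{2}d-1$.

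So the honest conclusion of your own computation is the corrected statement: $\phi$ is an isomorphism of graded Lie algebras when $\mfsl_{nd}(\C[t^{\pm 1}])$ carries the block grading you transported ($E_{ab}$ in degree $\mathrm{block}(a)-\mathrm{block}(b)$ and $t$ in degree $d$), not the grading $\deg(E_{ij}t^{r})=j-i+r$ as literally written. Your write-up should prove that version explicitly (the four-family check then does go through, with $kd+r$ on both sides), or at least record the discrepancy; as it stands, the claim that the transported grading agrees with the stated total grading is precisely the step that fails, and no amount of index bookkeeping will make the two degrees $kd+r$ and $j-i-nr+k$ equal.
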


Let $\mfp$ be the parabolic subalgebra of $\mfsl_{nd}$ consisting of all
the lower triangular matrices and the matrices with $d$ blocks of $n\times n$
matrices along the diagonal. Let $\mfn$ be the nilpotent subalgebra
which consists of all the upper triangular matrices except those in
these blocks along the diagonal, so that $\mfsl_{nd} \cong \mfp \oplus
\mfn$.  The isomorphism given in proposition \ref{isoloop} shows that
$\mfsl_n(B)$ is isomorphic to $\mfp \ot \C[t]
\oplus \mfn \ot t\C[t]$.

We gave in the previous subsection two triangular decompositions of $\mfsl_n(A)$.
The Lie algebra $\mfsl_{nd}(\C[t^{\pm 1}])$ admits
similar decompositions, namely:
\[ \mfsl_{nd}(\C[t^{\pm 1}]) \cong \mfn_{nd}^- \ot \C[t^{\pm 1}]
\bigoplus \mfh_{nd} \ot \C[t^{\pm 1}]
\bigoplus \mfn_{nd}^+ \ot \C[t^{\pm 1}],\]
\[ \mfsl_{nd}(\C[t^{\pm 1}]) \cong \big( \mfsl_{nd}\ot t^{-1}\C[t^{-1}] \oplus
\mfn_{nd}^- \big)
\bigoplus \mfh_{nd}  \bigoplus   \big( \mfn_{nd}^+ \oplus \mfsl_{nd}\ot t\C[t] \big)  .\]
The isomorphism given in proposition \ref{isoloop} preserves the
second decomposition, but not the first one.

We conclude that a lot is already known about the representation theory of cyclic
affine algebras and simple finite dimensional representations are classified by
tuples of $nd-1$ polynomials (see \cite{ca}). We state this more explicitly for toroidal
Lie algebras in section \ref{inthwmod} below.

\section{Cyclic double affine Lie algebras}\label{cycDALA}

In this section, we set $A=\C[u^{\pm 1},v^{\pm 1}]\rtimes\Gamma, B=\C[u^{\pm
1},v]\rtimes\Gamma, C=\C[u,v]\rtimes\Gamma$. Here, $\xi$ acts on $u,v$
by $\xi(u) = \zeta u, \xi(v) =
\zeta^{-1}v$. Note that, setting $w=uv$, we deduce that $A$ is isomorphic
to $\C[u^{\pm 1},w^{\pm 1}]\rtimes\Ga$ where $\Ga$ acts trivially on $w$.
The same remark applies to $B$.

We  will be interested in the structure of the Lie
algebras $\mfsl_n(A)$, $\mfsl_n(B)$, $\mfsl_n(C)$ and of their
universal central extensions.

\subsection{Structure}

    As in the previous section, we need to know certain cyclic homology
groups.
\[ HC_0(A) = A^{\Ga}  \text{ , }  HC_0(B) = B^{\Ga} \text{ , }HC_0(C)
= C^{\Ga}\oplus \C^{\oplus(|\Ga|-1)},\]
$HC_1(A) \cong
\frac{\Omega^1(A)^{\Ga}}{d(A^{\Ga})}$ and similarly
for $B,C$ (see corollary \ref{HC} below.). As vector spaces, when $\Ga \neq \{ \mathrm{id}\}$, we
have
$$
HC_1(A) = v\C[u^{\pm 1},v^{\pm 1}]^{\Ga}du  \oplus  \C[u^{\pm 1}]^{\Ga}
v^{-1}dv 
   =  (\C[u^{\pm 1}]^{\Ga} \ot_{\C}\C[w^{\pm 1}]) u^{-1}du \oplus
\C[u^{\pm 1}]^{\Ga}w^{-1}dw, $$
\[ HC_1(B) = v\C[u^{\pm 1},v]^{\Ga}du
\oplus \C u^{-1}du \cong (w\C[w] \ot_{\C} \C[u^{\pm 1}]^{\Ga}) u^{-1} du \oplus \C u^{-1}
du,  \]
\[HC_1(C) = v\C[u,v]^{\Ga}du.\] These can be
obtained from the computations of Hochschild homology in \cite{Fa}, which are
valid for more general groups than $\Ga$. For $A$ and $B$, they can also
be deduced from the proof of the following extension of proposition \ref{isoloop}.

\begin{proposition}\label{isotor}
The Lie algebra $\mfsl_n(A)$ (resp. $\mfsl_n(B)$) is isomorphic to the toroidal Lie
algebra $\mfsl_{nd}(\C[s^{\pm 1},t^{\pm 1}])$ (resp.
$\mfsl_{nd}(\C[s^{\pm 1},t])$).
\end{proposition}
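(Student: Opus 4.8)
The plan is to generalize the algebra isomorphism $A\cong M_d(\C[t^{\pm 1}])$ used in the proof of proposition \ref{isoloop} to the two-variable setting. Recall that in the affine case the key was the explicit identification $E_{l+r,l}t^k \leftrightarrow u^{kd+r}\mbe_l$, which realized $\C[u^{\pm 1}]\rtimes\Ga$ as $d\times d$ matrices over $\C[t^{\pm 1}]$ with $t=u^d$. The first step here is to produce an analogous isomorphism of associative algebras $A = \C[u^{\pm 1},v^{\pm 1}]\rtimes\Ga \cong M_d(\C[s^{\pm 1},t^{\pm 1}])$ (and $B\cong M_d(\C[s^{\pm 1},t])$). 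Once such an algebra isomorphism is in hand, applying $\mfsl_n(-)$ to both sides and using $M_n(M_d(R))\cong M_{nd}(R)$ immediately yields $\mfsl_n(A)\cong\mfsl_{nd}(\C[s^{\pm 1},t^{\pm 1}])$, since the construction $\mfsl_n$ of the derived subalgebra of $\mfgl_n$ depends only on the associative algebra.

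To build the algebra isomorphism I would exploit the change of variables already suggested in the text: set $w=uv$, so that $\Ga$ acts trivially on $w$ and $A\cong\C[u^{\pm 1},w^{\pm 1}]\rtimes\Ga$ with $\xi(u)=\zeta u$, $\xi(w)=w$. Since $\Ga$ acts trivially on $w$, the factor $\C[w^{\pm 1}]$ splits off as a central tensor factor, reducing the problem to $\C[u^{\pm 1}]\rtimes\Ga\cong M_d(\C[t^{\pm 1}])$ tensored over $\C$ with $\C[w^{\pm 1}]$. Concretely, I would set $s=w=uv$ and $t=u^d$, and extend the affine matrix correspondence $u^{kd+r}\mbe_l\leftrightarrow E_{l+r,l}t^k$ by letting powers of $w$ act as the central scalar $s$. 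This gives
\[
A\cong M_d(\C[t^{\pm 1}])\ot_{\C}\C[w^{\pm 1}] \cong M_d(\C[s^{\pm 1},t^{\pm 1}]),
\]
and the same computation with $w\in w\C[w]$ (nonnegative powers only, since $v\in\C[v]$ forces $w=uv$ to carry a nonnegative $v$-degree) handles $B$, giving $M_d(\C[s^{\pm 1},t])$. I would verify on generators that this map respects the crossed-product relation $(a_1\xi^{i})(a_2\xi^{j})=a_1\,\xi^i(a_2)\,\xi^{i+j}$.

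The step I expect to require the most care is pinning down exactly which ring of Laurent polynomials in $s$ and $t$ appears, and in particular checking that the invariance/nonnegativity constraints for $B$ (where $v$ runs over $\C[v]$ but $u$ over $\C[u^{\pm 1}]$) translate correctly into $s$ being a genuine Laurent variable while $t$ ranges only over nonnegative powers. The subtlety is that $w=uv$ and $t=u^d$ are not independent of $u$ and $v$ in an obvious monomial-by-monomial way, so I must confirm the substitution $\{u^av^b\}\mapsto\{s^{?}t^{?}\}$ actually lands in and surjects onto the claimed ring; this amounts to verifying that the lattice of exponents $(a,b)\in\Z\times\Z_{\ge 0}$ for $B$ maps bijectively, after recording the $\Ga$-index $l$ in the matrix position, onto $\Z\times\Z_{\ge 0}$ for $(s,t)$. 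Once the associative-algebra statement is secured with the correct rings, the passage to $\mfsl_{nd}$ is purely formal, and the cyclic homology assertions $HC_1(A)$, $HC_1(B)$ recorded above follow from transporting the known computation $HC_1(M_d(R))\cong HC_1(R)$ for a Laurent polynomial ring $R$ through this isomorphism.
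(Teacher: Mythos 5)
Your proposal is correct and is essentially the paper's own argument: pass to $w=uv$ (on which $\Ga$ acts trivially), factor $A\cong(\C[u^{\pm 1}]\rtimes\Ga)\ot_{\C}\C[w^{\pm 1}]$ (and similarly $B$ with $\C[w]$), and apply the isomorphism $\C[u^{\pm 1}]\rtimes\Ga\cong M_d(\C[u^{\pm d}])$ from proposition \ref{isoloop}. The only discrepancy is the cosmetic swap of the names $s$ and $t$ relative to the paper (which makes your final ring for $B$ read $\C[s^{\pm 1},t]$ where your own conventions would give $\C[s,t^{\pm 1}]$), but this affects nothing mathematically.
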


\begin{proof}
We write $A$ as $A=\C[u^{\pm 1}, w^{\pm 1}]$. Since $\C[u^{\pm 1}]\rtimes
\Ga \cong M_d(\C[s^{\pm 1}])$ (with $s=u^d$, see the proof of
proposition \ref{isoloop}) and $A\cong (\C[u^{\pm
}]\rtimes\Ga) \ot_{\C} \C[w^{\pm 1}]$, we immediately deduce this
proposition (setting $t=w$). The same argument applies to $B$.
\end{proof}

\begin{remark}
The isomorphism in this proposition is reminiscent of lemma 4.1 in \cite{BGT}. Furthermore, in that article, using vertex operator techniques, the authors constructed representations of a certain affinization of $\mfgl_N(\C[G][t,t^{-1}])$, where $G$ is an admissible subgroup of $\C^{\times}$ (in the sense defined in that article).
\end{remark}

Explicitly, the isomorphism in proposition \ref{isotor} sends $E_{ab}u^i w^j \mbe_k$ with $i=m+dl,
-k\le m \le d-1-k, 0\le k\le d-1$ to $E_{a+(m+k)n,b+kn} s^l t^j$; in
terms of $u,v$ instead of $u,w$, it maps $E_{ab}u^i v^j \mbe_k$ with
$i-j=m+dl, -k\le
m\le d-1-k$ to $E_{a+(m+k)n,b+kn} s^l t^j$. In particular, if $i=0$, then
this map restrict to the same isomorphism as in proposition \ref{isoloop}
for $\C[v^{\pm 1}]$ with $v$ playing the role of $u^{-1}$. The Lie
subalgebra $\mfsl_n(\C[w^{\pm 1}]\rtimes\Ga)$ gets identified with the
direct sum of $d$ copies of $\mfsl_n(\C[t^{\pm 1}])$, which agrees with
$\C[w^{\pm 1}]\rtimes \Ga \cong \C[w^{\pm 1}]\ot_{\C}\C[\Ga] \cong
\C[w^{\pm 1}]^{\oplus d}$.

\begin{corollary}\label{HC}
The cyclic homology groups $HC_1(A)$ and $HC_1(B)$ are given by \[ HC_1(A)
\cong \frac{\Omega^1( \C[s^{\pm 1},t^{\pm 1}]) }{d(\C[s^{\pm 1}, t^{\pm 1}])}
\text{ , } HC_1(B) \cong \frac{\Omega^1(\C[s^{\pm 1},t])}{d(\C[s^{\pm 1},
t])}.\]
\end{corollary}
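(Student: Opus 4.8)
The plan is to transport the computation of $HC_1$ through the Morita equivalence already exhibited in the proof of proposition \ref{isotor}, exactly as was done in the one-variable case in Section \ref{cycALA}. First I would recall from that proof the explicit algebra isomorphisms $A = \C[u^{\pm 1},v^{\pm 1}]\rtimes\Ga \cong M_d(\C[s^{\pm 1},t^{\pm 1}])$ and $B = \C[u^{\pm 1},v]\rtimes\Ga \cong M_d(\C[s^{\pm 1},t])$, where $s = u^d$ and $t = w = uv$. Since cyclic homology is Morita invariant, so that $HC_1(M_d(R)) \cong HC_1(R)$ for any unital associative algebra $R$, these isomorphisms immediately give $HC_1(A) \cong HC_1(\C[s^{\pm 1},t^{\pm 1}])$ and $HC_1(B) \cong HC_1(\C[s^{\pm 1},t])$.

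It then remains to compute $HC_1$ of the two commutative coordinate rings. Both $\C[s^{\pm 1},t^{\pm 1}]$ and $\C[s^{\pm 1},t]$ are smooth, being localizations of a polynomial ring, so the standard computation of the cyclic homology of a smooth commutative algebra $R$ applies; in degree one there is no de Rham contribution, and it reads $HC_1(R) \cong \Omega^1(R)/d(R)$. Substituting $R = \C[s^{\pm 1},t^{\pm 1}]$ and $R = \C[s^{\pm 1},t]$ yields precisely the two formulas in the statement. This is the two-variable analog of the identity $HC_1(\C[u^{\pm 1}]\rtimes\Ga)\cong \frac{(\C[u^{\pm 1}]du)^{\Ga}}{d(\C[u^{\pm 1}]^{\Ga})}$ that was deduced from $A\cong M_d(\C[t^{\pm 1}])$ in Section \ref{cycALA}.

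If one prefers to stay within the results already recorded in the excerpt and avoid invoking Morita invariance of cyclic homology directly, there is a purely Lie-theoretic route. From $\mfgl_n(M_d(R)) = \mfgl_{nd}(R)$ one obtains $\mfsl_n(A) \cong \mfsl_{nd}(\C[s^{\pm 1},t^{\pm 1}])$ and $\mfsl_n(B) \cong \mfsl_{nd}(\C[s^{\pm 1},t])$; isomorphic perfect Lie algebras have isomorphic universal central extensions, hence isomorphic centers, and by \cite{KL} these centers are $HC_1(A)$ and $HC_1(\C[s^{\pm 1},t^{\pm 1}])$ respectively, and similarly for $B$. Either route reduces the corollary to the smooth commutative computation.

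The only genuine content, and hence the step deserving the most care, is the smooth commutative identity $HC_1(R)\cong \Omega^1(R)/d(R)$; everything else is transport of structure through isomorphisms already in hand. The point to verify is that the Laurent and mixed localizations really are smooth, equivalently regular, so that the Hochschild--Kostant--Rosenberg description, and with it the stated form of $HC_1$, is valid; this is immediate here. No difficulty is expected in identifying $\Omega^1$ and the exact forms $d(R)$ explicitly, these being the usual module of K\"ahler differentials and its image under the de Rham differential.
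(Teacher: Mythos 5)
Your proposal is correct and follows essentially the same route as the paper: the paper's proof simply invokes the algebra isomorphism $A \cong M_d(\C[s^{\pm 1},t^{\pm 1}])$ (and its analogue for $B$) from proposition \ref{isotor}, with Morita invariance of cyclic homology and the standard identification $HC_1(R)\cong \Omega^1(R)/d(R)$ for the commutative ring $R$ left implicit, exactly the steps you spell out. The only minor remark is that smoothness is not actually needed for the degree-one identification (it holds for any commutative algebra in characteristic zero via Connes' $SBI$ sequence), but since the rings here are smooth this changes nothing.
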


\begin{proof}
This is a corollary of the algebra isomorphism $A \cong M_d(\C[s^{\pm 1},t^{\pm 1}]) )$ in the proof of proposition \ref{isotor}. (A similar isomorphism holds for $B$.)
\end{proof}

When we restrict the isomorphism of proposition \ref{isotor} to
$\mfsl_n(C)$, we obtain an injective map from $\mfsl_n(C)$ into
$\mfsl_{nd}(\C[s,t])$. It comes from an injection of $C$ into
$M_d(\C[s,t])$. This latter map is a special case ($n=1,x,y,\nu=0$) of
the homomorphism
introduced in subsection 6.1 in \cite{Go2} - see also \cite{CB}. It is
not an epimorphism from
$C$ to $M_d(\C[s,t])$ in the set theoretical sense, but it is an
epimorphism in the following categorical sense: any two ring
homomorphisms $M_d(\C[s,t]) \stackrel{\lra}{\lra} D$ whose composites
with  $C \into M_d(\C[s,t])$ agree on $C$ must be equal.

Using proposition \ref{isotor} and the explicit isomorphism given right
after the proof, we can give a formula for the bracket on
$\wh{\mfsl}_n(A)$. It is easier to do it first with $u,w$ and then
translate the result to $u,v$. Note that we identify the center of
$\wh{\mfsl}_n(A)$ with $\frac{\Omega^1(\C[s^{\pm 1},t^{\pm 1}])}{d(\C[s^{\pm
1}, t^{\pm 1}])}$, so that it makes sense to write $d(f)$ for some
$f\in\C[s^{\pm 1},t^{\pm 1}]$ (with $s=u^d,t=w$ as above). For a
commutative algebra $R$, the bracket on $\mfsl_n(R)$ is given by
$[E_{a_1b_1}r_1,E_{a_2b_2}r_2] = [E_{a_1b_1},E_{a_2b_2}]r_1r_2 +
\mathrm{Tr}(E_{a_1b_1}E_{a_2b_2}) r_1 d(r_2)$, where $\mathrm{Tr}$ is
the usual trace functional \cite{Ka} and $r_1 d(r_2) \in \frac{\Omega^1(R)}{dR}$.
Now consider 
\[ \begin{split}
&[E_{a_1+(m_1+k_1)n, b_1+k_1n} s^{l_1}t^{j_1}, E_{a_2+(m_2+k_2)n,
b_2+k_2n} s^{l_2}t^{j_2}]
\\= & \delta_{b_1+k_1n = a_2+(m_2+k_2)n} E_{a_1+(m_1+k_1)n, b_2+k_2n}
s^{l_1+l_2}t^{j_1+j_2}
\\& - \delta_{b_2+k_2n = a_1+(m_1+k_1)n}E_{a_2+(m_2+k_2)n,
b_1+k_1n}s^{l_1+l_2}t^{j_1+j_2}
\\&+
\delta_{a_1+(m_1+k_1)n=b_2+k_2n} \delta_{b_1+k_1n = a_2 + (m_2+k_2)n}
s^{l_1}t^{j_1} d(s^{l_2}t^{j_2}).
\end{split} \]

The isomorphism of proposition \ref{isotor} identifies the left-hand side
with
$$[ E_{a_1b_1}u^{i_1}w^{j_1}\mbe_{k_1},
E_{a_2b_2}u^{i_2}w^{j_2}\mbe_{k_2}], \;\; i_1= dl_1 + m_1 ,\, i_2= dl_2 + m_2 ,$$
whereas the right-hand side is
identified with  \[ \delta_{a_2=b_1}\delta_{k_1\equiv m_2+k_2}
E_{a_1,b_2}u^{i_1+i_2}w^{j_1+j_2} - \delta_{b_2=a_1}\delta_{m_1+k_1\equiv
k_2}E_{a_2b_1}u^{i_1+i_2} w^{j_1+j_2} \] \[ +
\delta_{a_1=b_2}\delta_{m_1+k_1\equiv
k_2}\delta_{b_1=a_2}\delta_{k_1\equiv
m_2+k_2}u^{i_1-m_1}w^{j_1}d(u^{i_2-m_2}w^{j_2}).\]
Setting $v=wu^{-1}$, we obtain a formula in terms of $u$ and $v$, if
we now define $m_1,m_2$ by $i_i-j_i = m_i+l_i d$:
\begin{equation*}
\begin{split}
&[ E_{a_1b_1}u^{i_1}v^{j_1}\mbe_{k_1},
E_{a_2b_2}u^{i_2}v^{j_2}\mbe_{k_2}]
\\ = & \delta_{a_2=b_1}\delta_{k_1\equiv
m_2+k_2} E_{a_1,b_2}u^{i_1+i_2}v^{j_1+j_2}
    - \delta_{b_2=a_1}\delta_{m_1+k_1\equiv k_2}E_{a_2b_1}u^{i_1+i_2}
v^{j_1+j_2} \\
    & + \delta_{a_1=b_2}\delta_{m_1+k_1\equiv
k_2}\delta_{b_1=a_2}\delta_{k_1\equiv m_2+k_2}
u^{i_1-m_1}v^{j_1}d(u^{i_2-m_2}v^{j_2}).
\end{split}
\end{equation*}

The formula for the bracket when $A$ is replaced by $B$ and $C$ is exactly
the same, simply obtained by restricting the values allowed for
$i_1,i_2,j_1,j_2$.  The description of $\wh{\mfsl}_n(A)$ given by proposition \ref{UCE} implies that the natural maps $\wh{\mfsl}_n(B),\wh{\mfsl}_n(C) \lra \wh{\mfsl}_n(A)$ are embeddings. 

The next proposition is an adaptation of proposition 3.5 in \cite{MRY} to
$\wh{\mfsl}_n(A)$, using proposition \ref{isotor}. There are a few more relations in
proposition
\ref{DALApres} below than in \textit{loc. cit.}, but the proofs are the
same.

Let $\mathbf{C}=(c_{ij})$ be the affine Cartan matrix of type $\wh{A}_{nd-1}$ with
rows and columns indexed from $0$ to $nd-1$. Let $f:[0,n-1]\times
[0,d-1]\lra [0,nd-1]$ be the function $f(i,j) = i+jn$.

\begin{proposition}\label{DALApres}
The Lie algebra $\wh{\mfsl}_n(A)$ is isomorphic to the
Lie algebra $\mft$ generated by the elements $X_{i,j,r}^{\pm},
H_{i,j,r}$ for $0\le i\le n-1, 0\le j\le d-1, r\in\Z$, and
a central element $\msc$ satisfying the following list of
relations.
\[ [H_{i_1,j_1,r_1}, H_{i_2,j_2,r_2}] = r_1 c_{f(i_1,j_1),f(i_2,j_2)}
\delta_{r_1+r_2=0} \msc, \]
\[[H_{i_1,j_1,0}, X_{i_2,j_2,r_2}^{\pm}] =
\pm c_{f(i_1,j_1),f(i_2,j_2)}  X_{i_2,j_2,r_2}^{\pm}, \]
   \[ [H_{i_1,j_1,r_1+1}, X_{i_2,j_2,r_2}^{\pm}]  =
[H_{i_1,j_1,r_1}, X_{i_2,j_2,r_2+1}^{\pm}],\]
\[[X_{i_1,j_1,r_1+1}^{\pm}, X_{i_2,j_2,r_2}^{\pm}]  =
[X_{i_1,j_1,r_1}^{\pm}, X_{i_2,j_2,r_2+1}^{\pm}], \]
\[ [X_{i_1,j_1,r_1}^+, X_{i_2,j_2,r_2}^{-}] =
\delta_{i_1=i_2}\delta_{j_1=j_2} (H_{i_1,j_1,r_1+r_2} + r_1
\delta_{r_1+r_2=0}\msc),  \]
\[
ad(X_{i_1,j_1,r_1}^{\pm})^{1-c_{f(i_1,j_1),f(i_2,j_2)}}(X_{i_2,j_2,r_2}^{\pm})
= 0  .\]

\end{proposition}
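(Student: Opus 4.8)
The plan is to transport the Moody--Rao--Yokonuma presentation \cite{MRY} of the universal central extension of a toroidal Lie algebra through the isomorphism of Proposition \ref{isotor}. By that proposition $\mfsl_n(A)\cong\mfsl_{nd}(\C[s^{\pm 1},t^{\pm 1}])$ with $s=u^d$ and $t=w=uv$, so $\wh{\mfsl}_n(A)$ is the universal central extension of this toroidal algebra, whose center is $HC_1(A)\cong\Omega^1(\C[s^{\pm 1},t^{\pm 1}])/d(\C[s^{\pm 1},t^{\pm 1}])$ by Corollary \ref{HC}. It contains in $t$-degree zero the affine Kac--Moody algebra $\wh{\mfsl}_{nd}(\C[s^{\pm 1}])\cong\wh{\mfsl}_n(\C[u^{\pm 1}]\rtimes\Ga)$ of type $\wh{A}_{nd-1}$; write $\mse_p,\msf_p,\msh_p$ for its Chevalley generators at the node $p$ and $K$ for its canonical central element. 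The relabelling $p=f(i,j)$ matches the $nd$ nodes to the index set of the statement, while the second loop variable $t$ supplies the extra index $r$.

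First I would build a homomorphism $\Phi\colon\mft\to\wh{\mfsl}_n(A)$ by $X^{+}_{i,j,r}\mapsto\mse_{f(i,j)}\ot t^{r}$, $X^{-}_{i,j,r}\mapsto\msf_{f(i,j)}\ot t^{r}$, $H_{i,j,r}\mapsto\msh_{f(i,j)}\ot t^{r}$ (as elements of the toroidal extension, the central corrections being fixed by its cocycle), and $\msc$ to the canonical central element $K_t$ attached to the $t$-direction. Verifying the six families of relations is then a direct computation combining the bracket formula displayed just before the proposition with the affine relations of $\wh{\mfsl}_{nd}(\C[s^{\pm 1}])$: the $H$--$H$ relation records the $t$-loop cocycle, whose coefficient is the entry $c_{f(i_1,j_1),f(i_2,j_2)}$ of $\mbC$; the two shift relations express locality in $t$, that is, the bracket of two elements of $t$-degrees $r_1,r_2$ depends only on $r_1+r_2$; the $X^{+}$--$X^{-}$ relation reproduces $[\mse_p\ot t^{r_1},\msf_p\ot t^{r_2}]=\msh_p\ot t^{r_1+r_2}+r_1\delta_{r_1+r_2=0}K_t$; and the last family is the Serre relations for $\mbC$, which hold already at $t$-degree zero. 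Since the $\mse_p\ot t^r,\msf_p\ot t^r$ generate $\mfsl_{nd}(\C[s^{\pm 1},t^{\pm 1}])$ and $\wh{\mfsl}_n(A)$ is perfect, $\Phi$ is surjective.

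The essential point is the reverse bound, that $\mft$ is no larger than its image, and here I would reproduce the spanning argument of \cite{MRY} with only cosmetic changes. The two shift relations let one concentrate all the $t$-degree of a bracket monomial on a single factor, reducing it, modulo central terms, to $\mfsl_{nd}(\C[s^{\pm 1}])\ot t^{r}$; the Chevalley--Serre relations then bound this part by the affine Kac--Moody algebra of type $\wh{A}_{nd-1}$ (Gabber--Kac), while the $H$--$H$ relation identifies the span of the $H_{i,j,r}$ with a Heisenberg algebra. The resulting spanning family maps under $\Phi$ to a linearly independent set in $\wh{\mfsl}_n(A)$, as one reads off from the explicit bracket computed before the statement, so $\Phi$ is injective. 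The single named central generator $\msc$ is in fact not extra: the $X^{+}$--$X^{-}$ relation gives $\msc=[X^{+}_{i,j,1},X^{-}_{i,j,-1}]-H_{i,j,0}$, and, $\wh{\mfsl}_n(A)$ being perfect, its whole center $HC_1(A)$ (infinite-dimensional by Corollary \ref{HC}) is likewise produced as brackets of the generators; $\msc$ is listed separately only to state the relations cleanly, just as the elements $H_{12}(a_1,a_2)$ were adjoined in the rank-one case of the presentation recalled in Section \ref{matrings}.

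The main obstacle is precisely this completeness step: checking that the six families hold is mechanical, but proving that they already cut $\mft$ down to the toroidal universal central extension requires the full Moody--Rao--Yokonuma spanning argument. Proposition \ref{isotor} places our situation squarely within their framework for the affine Cartan matrix of type $\wh{A}_{nd-1}$, the ``few extra relations'' over \textit{loc. cit.} amounting only to the bookkeeping forced by the relabelling $f$, so their proof carries over unchanged.
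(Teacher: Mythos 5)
Your proposal is correct and follows essentially the same route as the paper: the paper's proof is precisely the observation that Proposition \ref{isotor} places $\wh{\mfsl}_n(A)$ in the setting of Proposition 3.5 of \cite{MRY}, whose argument (construct the map on Chevalley--Kac loop generators, verify the relations, and run the spanning/completeness argument) is then transported unchanged. You simply spell out the details that the paper leaves to the citation, including the central corrections at the affine node and the identification of $\msc$ with $w^{-1}dw$.
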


\begin{remark}
The elements with $r=0$ generate a copy of $\wh{\mfsl}_n(\C[u^{\pm
1}]\rtimes\Ga)$ and this proposition gives a set of relations describing this
algebra, which is the one in terms of Chevalley-Kac generators of
$\wh{\mfsl}_{nd}(\C[t^{\pm 1}])$ - see proposition \ref{isoloop}. The elements
with $i\neq 0$ generate a central extension of $\mfsl_n(\C[w^{\pm
1}])^{\oplus d} $ and this proposition gives a presentation of
$\mfsl_n(\C[w^{\pm 1}])$. (This is the one obtained by considering a Cartan matrix of finite type $A_{n-1}$ in proposition 3.5 in \cite{MRY}.)
\end{remark}

An isomorphism $\tau: \mft \iso \wh{\mfsl}_n(A)$ is given explicitly on the
generators by the following formulas: \\

$$(X_{i,j,r}^{+}, X_{i,j,r}^{-})
\mapsto \begin{cases}
(E_{i,i+1} w^r \mbe_j , E_{i+1,i} w^r\mbe_j )&\text{ if $i\neq 0$,}
\\(E_{n1} u^{-1} w^r \mbe_j, E_{1n} u w^r \mbe_{j-1} ) &\text{ if $i = 0,j\neq 0$,}
\\(E_{n1} u^{2d-1} w^r  \mbe_{d-1} , E_{1n} \ot u^{-(2d-1)} w^r \mbe_0)&\text{ if $i = j = 0$,}
\end{cases}$$
$$H_{i,j,r}
\mapsto \begin{cases}
(E_{ii} - E_{i+1,i+1})w^r \mbe_j&\text{ if $i\neq 0$,}
\\E_{nn} w^r \mbe_{j-1} - E_{11} w^r \mbe_j &\text{ if $i = 0,j\neq 0$,}
\\E_{nn} w^r \mbe_{0} - E_{11} w^r \mbe_{d-1} - d w^r u^{-1}du&\text{ if $i = j = 0$,}
\end{cases}$$
$$\msc \mapsto w^{-1}dw = u^{-1}du + v^{-1}dv.$$

It is possible to obtain similar presentations for
$\wh{\mfsl}_n(B)$ and $\wh{\mfsl}_n(C)$, which are Lie subalgebras of
$\wh{\mfsl}_n(A)$.

\subsection{Derivations in the toroidal case}

The Kac-Moody Lie algebras of affine type associated to the semisimple Lie algebra $\mfg$
are obtained by adding a derivation to the universal central extension of $\mfg\ot_{\C}
\C[t^{\pm 1}]$. It has been known to people who work on extended affine Lie algebras how
to extend $\wh{\mfsl}_n(\C[s^{\pm 1}, t^{\pm 1}])$ by adding derivations. We interpret
this in the context of $ \mfsl_n(A) ,\mfsl_m(B), \mfsl_n(C)$, following
the monograph \cite{AABGP}.

To $ \wh{\mfsl}_n(A) $, we add two derivations $\msd_u,\msd_w^v$ which satisfy the commutation relations \[  [\msd_u, m \ot u^{k}w^l \ga] = 0 \text{ if $k \not\equiv 0 \; \mathrm{mod} \, d$}, \;\; [ \msd_u, m \ot u^{dk}v^j \ga ] = k m \ot u^{dk}w^j \ga   \] and $ [ \msd_w^v, m \ot u^{k} w^l \ga] = l m \ot u^{k} w^{l} \ga $. We observe that $ [ \msd_w^v, m \ot u^{k} v^l \ga] = l m \ot u^{k} v^{l}\ga$ and $ [\msd_u, m \ot u^{k} v^l \ga] = \delta_{k \equiv l} \frac{k-l}{d} m \ot u^{k} v^l \ga $. We could define similarly $\msd_v$ and $\msd_w^u$. We then have the relation $\msd_u = -\msd_v$ and $d \cdot \msd_u = \msd_w^u - \msd_w^v$. Dropping the index $w$, we can add two derivations $\msd^u$ and $\msd^v$ to $\wh{\mfsl}_n(A)$.

\begin{definition}
The cyclic double affine Lie algebra $ \ol{\mfsl}_n(A) $ is defined by adding the two derivations $\msd^u$ and $\msd^v$ to $ \wh{\mfsl}_n(A) $. We define similarly $ \ol{\mfsl}_n(B) $ and $ \ol{\mfsl}_n(C) $. 
\end{definition}

\subsection{Triangular decompositions}

We have the following triangular decompositions :
\begin{equation} \mfsl_n(A) \cong ( \mfn^-  A ) \oplus (
\mfh A \oplus I [A,A] )  \oplus ( \mfn^+  A  ), \label{td1} \end{equation}

and $\mfsl_n(A)$ is also isomorphic to
\begin{equation}  \label{td2}
\begin{split}
    & \mfsl_n u^{-1}\C[u^{-1},v^{\pm 1}]\rtimes\Ga \oplus \left(
\bigoplus_{\stackrel{i=1}{j\le -1}}^{d-1}
 I (\C[v^{\pm 1}] u^j \xi^i)   \oplus
\bigoplus_{\stackrel{s\leq -1}{r\not\equiv s}} I(\C u^s v^r) \right) \oplus
\mfn^- \C[v^{\pm 1}]\rtimes\Ga  \\
    \oplus & \left( \mfh \C[v^{\pm 1}]\rtimes \Ga\oplus 
\left( \bigoplus_{1\leq i\leq d-1}
I(\C[v^{\pm 1}]\xi^i) \oplus\bigoplus_{r\not\equiv 0} I(\C v^r) \right) \right) \oplus
\\
    & \mfsl_n u\C[u,v^{\pm 1}]\rtimes\Ga \oplus \left(
\bigoplus_{\stackrel{i=1}{j\ge 1}}^{d-1}
I(\C[v^{\pm 1}] u^j \xi^i)  \oplus \bigoplus_{\stackrel{s\geq 1}{r\not\equiv
s}} I(\C u^s v^r) \right)  \oplus  \mfn^+  \C[v^{\pm 1}]\rtimes\Ga. 
\end{split} 
\end{equation}

and then by, setting $w = uv$, we see that $\mfsl_n(A)$ is moreover
isomorphic to :

\begin{equation}   \label{td3}
\begin{split}
& \mfsl_n u^{-1}\C[u^{-1},w^{\pm 1}]\rtimes\Ga \oplus \left(
\bigoplus_{\stackrel{i=1}{j\le -1,r\in\Z}}^{d-1}
I( \C w^{r} u^j \xi^i)   \oplus
\bigoplus_{\stackrel{s\leq -1}{s\not\equiv 0,r\in\Z}} I(\C w^{r} u^s)  \right)
\oplus \mfn^- \C[w^{\pm 1}]\rtimes\Ga  \\
    \oplus &\left( \mfh \C[w^{\pm 1}]\rtimes \Ga\oplus 
\left( \bigoplus_{1\leq i\leq d-1} I(\C[w^{\pm 1}]\xi^i) \right) \right) \oplus 
\\
    &  \mfsl_n u\C[u,w^{\pm 1}]\rtimes\Ga \oplus  \left(
\bigoplus_{\stackrel{i=1}{j\ge 1,r\in\Z}}^{d-1}
I(\C w^{r} u^j \xi^i)  \oplus \bigoplus_{\stackrel{s\geq 1}{s\not\equiv 0,r\in\Z}}
I(\C w^{r} u^s) \right)  \oplus  \mfn^+  \C[w^{\pm 1}]\rtimes\Ga.  
\end{split}
\end{equation}
In the last two decompositions, one can exchange $u$ and $v$ and get
other decompositions.

The universal central extensions $\wh{\mfsl}_n(A),\wh{\mfsl}_n(B)$ and
$\wh{\mfsl}_n(C)$ also have three triangular decompositions: they are
obtained by adding the center to the middle part.

It is worth looking quickly at $\mfh A \oplus I [A,A]$. We know that $A
\cong M_d ( \C[s^{\pm 1}, t^{\pm 1}] )$, so, if $d>1$, $[A,A] = A$ and $\mfh A \oplus I
[A,A] \cong \mfgl_d(\C[s^{\pm 1}, t^{\pm 1}])^{\oplus n}$.

\section{Representations of cyclic double affine Lie algebras}\label{repcdala}

In this section, we begin to study the representation theory of the algebras 
defined in the previous sections.

\subsection{Integrable and highest weight modules for cyclic double
affine Lie algebras}\label{inthwmod}

At the end of section \ref{cycDALA}, we gave three triangular
decompositions of $\wh{\mfsl}_n(A)$. The first one is analogous to the
one used for Weyl modules in \cite{FeLo}, but in our situation the
middle Lie algebra is not commutative if $\Ga\neq\{ \mathrm{id} \}$. (Note
that an analog of the first triangular decomposition is not known for
quantum toroidal algebras.) We will return to Weyl modules in section \ref{Weylmod} below.

The second triangular decomposition corresponds to the first
triangular decomposition of the cyclic affine Lie algebras for the
parameter $v$, and to the second triangular decomposition of the cyclic
affine Lie algebra for the parameter $u$. Thus it is analogous to the
triangular decomposition used in \cite{Mi1, Naams, He1} to construct
$l$-highest weight representations of quantum toroidal algebras. Again, in our situation, the middle Lie
algebra is not commutative if $\Ga\neq\{ \mathrm{id} \}$.

However, the
middle term $\mfH$ of the last triangular decomposition is a
commutative Lie algebra. Actually, this last case is obtained by
considering the first
triangular decomposition of the cyclic affine Lie algebras for the
parameter $w$ and the second triangular decomposition of the cyclic
affine Lie algebra for the parameter $u$. Under the isomorphism
between $\wh{\mfsl}_n(A)$ and the toroidal Lie algebra
$\wh{\mfsl}_{nd}(\C[u^{\pm 1}, w^{\pm 1}])$ given in proposition
\ref{isotor}, it corresponds to the standard decomposition of
$\wh{\mfsl}_{nd}(\C[u^{\pm 1}, w^{\pm 1}])$ as used in \cite{cl} for a certain central
extension of $\mfsl_{nd}(\C[u^{\pm 1}, w^{\pm 1}])$. (It is analogous to the
decomposition considered
in the quantum case \cite{Mi1, Naams, He1}). In particular, the notions of integrable
and highest weight modules for
this decomposition have been studied
in \cite{cl, Ra1, yy}; we simply reformulate their results for the benefit of the reader
in the next subsection. Integrable highest
weight representations are classified by tuples of $nd-1$ polynomials. In opposition to
the quantum case, evaluation morphisms are available and provide a direct way to construct integrable
representations.

\subsubsection{The standard highest weight structure on $\wh{\mfsl}_n(A)$
and $\wh{\mfsl}_n(B)$}\label{hws}

In this subsection, we include previously known results about the
standard highest weight structure on $\wh{\mfsl}_n(A)$ and on
$\wh{\mfsl}_n(B)$. (Actually, the results below have been proved only
for $\wh{\mfsl}_n(A)$, but the proofs are similar for
$\wh{\mfsl}_n(B)$.) Let $\mfg^{\pm}$ be the positive and negative parts of the triangular
decomposition \eqref{td3}, and let $\mfH$ be the middle part.

Instead of highest weight vectors, we have to consider the notion of pseudo-highest
weight vectors. Suppose that $\La =( \la_{i,j,r} )_{0\le i\le n-1,r \in \Z}^{0\le j\le d-1}$ 
where $\la_{i,j,r} \in\C$. We define the Verma module $M(\La)$ to be the $\wh{\mfsl}_{n}(A)$-module
induced from the $\mfH\oplus\mfg^+$ representation generated by the vector $v_{\La}$ on
which $\mfg^+$ and $\msc$ act by zero and $H_{i,j,r}$ acts by multiplication by $\la_{i,j,r}$  (in
the notation of proposition \ref{DALApres}). We have a grading on the Verma module and so
it has a unique simple quotient $L(\lambda)$.  For $\mu\in\mfH^*$, we define the notion
of weight space $V_\mu$ of a representation $V$ as usual.

\begin{definition}\label{intdef1}
A module $M$ over $\wh{\mfsl}_n(A)$ is called integrable if $M = \oplus_{\mu\in\mfH^*}
M_{\mu}$ and if the vectors $E_{ij}u^sv^r\ga$ act locally nilpotently if $1\le i\neq
j\le n, r,s\in\Z,\ga\in\Ga$.
\end{definition}

Note that, in the quantum case, a stronger notion of integrability is used
instead of local nilpotency of the operators \cite{cl, Ra1}.

\begin{proposition}\label{irrepAB}\cite{cl}
The irreducible module $L(\La)$ is integrable if and only if, for any $0\le i\le n-1,
0\le j\le d-1$, we have $\la_{i,j,0} \in \Z_{\ge 0}$ and there exist monic polynomials 
$P_{i,j}(z)$ with non-zero constant terms and of degree $\la_{i,j,0}$ such that $ \sum_{r\ge 1}\la_{i,j,r}z^{r-1} =
- \frac{P'_{i,j}(z)}{P_{i,j}(z)} $ and  $ \sum_{r\ge 1}\la_{i,j,-r}z^{r-1} =  - \lambda_{i,j,0}z^{-1} + z^{-2}
\frac{P'_{i,j}(z^{-1})}{P_{i,j}(z^{-1})}  $ as formal power series.
\end{proposition}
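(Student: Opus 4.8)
The plan is to reduce the statement to the known classification of integrable irreducible highest weight modules for the toroidal algebra, and then to a node-by-node computation in affine $\mfsl_2$. By proposition \ref{isotor} and proposition \ref{DALApres}, the isomorphism $\tau$ identifies $\wh{\mfsl}_n(A)$ with the toroidal Lie algebra $\wh{\mfsl}_{nd}(\C[s^{\pm 1}, t^{\pm 1}])$ (with $t=w=uv$) in such a way that the triangular decomposition \eqref{td3} goes over to the standard decomposition used in \cite{cl}: the middle term $\mfH$ is spanned by the $H_{i,j,r}$, the spaces $\mfg^{\pm}$ are the positive and negative parts, and $v_{\La}$ is a pseudo-highest weight vector with $H_{i,j,r}v_{\La}=\la_{i,j,r}v_{\La}$. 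The key structural observation is that, for each fixed node $a=f(i,j)$ of the affine Dynkin diagram of type $\wh{A}_{nd-1}$, the relations of proposition \ref{DALApres} (with $c_{aa}=2$ and $[X^{+}_{i,j,r_1},X^{-}_{i,j,r_2}]=H_{i,j,r_1+r_2}+r_1\delta_{r_1+r_2=0}\msc$) show that $\{X^{\pm}_{i,j,r},H_{i,j,r}:r\in\Z\}$ together with $\msc$ span a subalgebra $\mfa_{i,j}\cong\wh{\mfsl}_2$, namely the loop algebra $\mfsl_2(\C[w^{\pm 1}])$ with its one-dimensional central extension.

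For necessity, assume $L(\La)$ is integrable and restrict to $\mfa_{i,j}$, so that $v_{\La}$ generates an integrable highest weight $\wh{\mfsl}_2$-module. Integrability of $X^{\pm}_{i,j,0}$ forces $\la_{i,j,0}\in\Z_{\ge 0}$ by finite-dimensional $\mfsl_2$-theory. I would then invoke Garland's identity: defining $P^{(i,j)}_k$ by $\sum_{k\ge 0}P^{(i,j)}_k z^k=\exp\big(-\sum_{s\ge 1}\tfrac{1}{s}H_{i,j,s}z^s\big)$, one has $(X^{+}_{i,j,1})^{(k)}(X^{-}_{i,j,0})^{(k)}v_{\La}\equiv P^{(i,j)}_k v_{\La}$ modulo $\mfg^{+}$, so local nilpotency of $X^{-}_{i,j,0}$ (which kills $v_{\La}$ after $\la_{i,j,0}+1$ applications) forces $P^{(i,j)}_k v_{\La}=0$ for $k>\la_{i,j,0}$. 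Hence the series $\sum_{k\ge 0}c^{(i,j)}_k z^k$, where $c^{(i,j)}_k$ is the scalar by which $P^{(i,j)}_k$ acts on $v_{\La}$, is a polynomial $P_{i,j}(z)$ of degree exactly $\la_{i,j,0}$, monic and with non-zero constant term (the latter because $w=uv$ is invertible, so evaluation points lie in $\C^{\times}$). Taking the logarithmic derivative of the defining exponential gives $\sum_{r\ge 1}\la_{i,j,r}z^{r-1}=-P'_{i,j}(z)/P_{i,j}(z)$, and repeating the computation with the loop direction reversed (the symmetry $w\mapsto w^{-1}$) yields the stated formula for the $\la_{i,j,-r}$.

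For sufficiency, given monic polynomials $P_{i,j}$ of degree $\la_{i,j,0}$ with non-zero constant terms, I would realize $\La$ through evaluation representations. Since $w=uv$ is invertible, each root of the $P_{i,j}$ determines an evaluation point in $\C^{\times}$, and the corresponding evaluation morphism $\mfsl_{nd}(\C[s^{\pm 1}, t^{\pm 1}])\lra\mfsl_{nd}$ produces a finite-dimensional module. Forming the tensor product of such evaluation modules, with multiplicities read off from the roots of the $P_{i,j}$, the tensor product of highest weight vectors is a pseudo-highest weight vector of weight $\La$; as integrability is preserved under tensor products and subquotients, the irreducible quotient of the submodule it generates is $L(\La)$ and is integrable. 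This is exactly the step where the availability of evaluation morphisms, absent in the quantum toroidal case, makes the construction direct.

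The main obstacle is the necessity computation, in two respects: making Garland's identity precise in the toroidal setting while tracking the central term $\msc$, so that $P_{i,j}$ emerges of degree exactly $\la_{i,j,0}$ with the correct normalization; and verifying that the node-by-node reduction to the subalgebras $\mfa_{i,j}$ loses no information, i.e. that the off-diagonal Cartan entries $c_{f(i_1,j_1),f(i_2,j_2)}$ impose no constraint beyond those already coming from each $\mfa_{i,j}$. For the latter one checks that, in a pseudo-highest weight module, the cross-node relations are controlled entirely by the Serre relations and the $H$-action, exactly as in the affine situation treated in \cite{cl, Ra1, yy}, so that the single-node conditions are also jointly sufficient.
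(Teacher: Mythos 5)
Your overall route is the one the paper itself takes (it essentially defers to \cite{cl}): identify $\wh{\mfsl}_n(A)$ with the toroidal algebra via proposition \ref{isotor}, reduce necessity node by node to the loop algebra of $\mfsl_2$ through Garland-type identities, and get sufficiency from tensor products of evaluation modules. However, your sufficiency step, as written, would fail. The decomposition \eqref{td3} is not the loop--loop decomposition \eqref{td1}: under the identification of proposition \ref{isotor} its positive part $\mfg^+$ contains all of $\mfsl_{nd}\ot s\C[s]\ot\C[t^{\pm 1}]$ (the Chevalley--Kac positive part of the affine direction in $u$, looped in $w$), not just $\mfn_{nd}^+\ot\C[s^{\pm1},t^{\pm1}]$. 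If you evaluate \emph{both} variables at a point of $(\C^{\times})^2$ and pull back a finite-dimensional $\mfsl_{nd}$-module, those elements act as nonzero multiples of arbitrary elements of $\mfsl_{nd}$; consequently a vector annihilated by $\mfg^+$ in a tensor product of such modules is annihilated by all of $\mfsl_{nd}$, so the tensor product of highest weight vectors is not a pseudo-highest weight vector and the construction does not yield $L(\La)$. One can also see the obstruction on weights: $\sum_{i,j}H_{i,j,0}$ is a nonzero central element (a multiple of the class $u^{-1}du$, visible in the formula for $\tau(H_{0,0,r})$), and it acts by zero on any module pulled back from the centerless toroidal algebra through evaluation, forcing $\sum_{i,j}\la_{i,j,0}=0$ and hence, given $\la_{i,j,0}\in\Z_{\ge 0}$, the trivial $\La$. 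The construction that actually works, and is the one meant by ``tensor products of evaluation modules'' in \cite{cl}, evaluates only the variable $w$ at the nonzero points determined by the roots of the $P_{i,j}$ and pulls back \emph{integrable highest weight modules of the affine Lie algebra} $\wh{\mfsl}_{nd}$ in the $u$-direction; these are in general infinite dimensional and of positive level in that direction, and their highest weight vectors are genuinely killed by $\mfn_{nd}^+\ot\C[w^{\pm 1}]\oplus\mfsl_{nd}\ot s\C[s]\ot\C[w^{\pm 1}]$.

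A smaller point on the necessity side: the Garland identity at a fixed node produces a polynomial controlling the positive series $\sum_{r\ge 1}\la_{i,j,r}z^{r-1}$, and ``repeating with the loop direction reversed'' produces a second polynomial controlling $\sum_{r\ge 1}\la_{i,j,-r}z^{r-1}$; the statement to be proved is that these are the \emph{same} polynomial (with roots inverted), i.e. that $r\mapsto\la_{i,j,r}$ for $r\in\Z$ is a single finite sum of exponentials with positive integer multiplicities. This does not follow from the $w\mapsto w^{-1}$ symmetry alone; it is precisely the content of the loop-$\mfsl_2$ classification of \cite{ca,cl}, which is what the paper invokes at this point, so you should either cite it or supply the argument linking the two series.
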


The proof of the necessary condition in this proposition reduces to the case of the loop Lie algebra of $\mfsl_2$, which is why it extends automatically from the affine to the double affine setup.
The sufficiency is proved as in \cite{cl} using tensor products of evaluation modules (the formulas for the power series
are a bit different from those in [Proposition 3.1, \cite{cl}] as we use 
different variables; see also \cite{Mi2}). In the quantum context, the polynomials $P_{i,j}(z)$ are called Drinfeld polynomials.  A similar criterion for integrability exists for quantum toroidal algebras: this is
explained in \cite{He1} after proving that certain subalgebras of a quantum toroidal
algebras are isomorphic to the quantized enveloping algebra of $\mfsl_2(\C[t^{\pm
1}])$. Affine Yangians for $\mfsl_n$ are built from copies of $Y(\mfsl_2)$, the Yangian
for $\mfsl_2$: this follows from the PBW property of affine Yangians proved in
\cite{Gu2}; therefore, a similar integrability condition holds for them also.

\subsubsection{The standard highest weight structure on $\wh{\mfsl}_n(C)$}

  The case which interests us more and presents some novelty is
$\wh{\mfsl}_n(C)$, because the Lie algebra $\mfsl_n(C)$ is not isomorphic to
$\mfsl_{nd}(\C[s,t])$.  The three triangular decompositions of $\wh{\mfsl}_n(A)$ given at
the end of the section \ref{cycDALA} yield such
decompositions for $\wh{\mfsl}_n(C)$ and we consider the one coming from
\eqref{td3}. More precisely, $\mfsl_n(C)$ can be decomposed into the direct sum of the
following three subalgebras.
\begin{equation} \label{td3C}
\begin{split}
& \left( \mfsl_n v\C[v,w]\rtimes\Ga \oplus  \left(
\bigoplus_{j\ge 1, 1\leq i\le d-1} I(\C[w] v^j \xi^i)   \oplus
\bigoplus_{r\ge 1,r\not\equiv 0} I(\C[w] v^r)  \right) \oplus \mfn^- \C[w]\rtimes\Ga
\right)  \\
  \oplus & \left( \mfh \C[w]\rtimes \Ga \oplus 
\Big( \bigoplus_{1\leq i\leq d-1} I(w\C[w]\xi^i) \Big) \right) 
\\
  \oplus &\left( \mfsl_n u\C[u,w]\rtimes\Ga \oplus \left(
\bigoplus_{j\ge 1, 1\leq i\leq d-1}
I(\C[w] u^j \xi^i)  \oplus \bigoplus_{r\geq 1,r\not\equiv 0}
I(\C[w] u^r) \right)  \oplus  \mfn^+  \C[w]\rtimes\Ga
    \right). 
\end{split}
\end{equation}

We have an embedding $\wh{\mfsl}_n(C) \into \wh{\mfsl}_{nd}(\C[u^{\pm 1},v])$, but in
order to classify integrable, highest weight  representations of $\wh{\mfsl}_n(C)$, we
will instead use the following presentation.

\begin{proposition}\label{presC}
The Lie algebra $\wh{\mfsl}_n(C)$ is isomorphic to the Lie algebra $\mfk$ generated by
the elements $X_{i,j,r}^{\pm}, H_{i,j,r}, X_{0,j,r}^+, X_{0,j,r+1}^-, H_{0,j,r+1}$ for
$1\le i\le n-1, 0\le j\le d-1, r\in\Z_{\ge 0}$, satisfying the following list of
relations.
\[ [H_{i_1,j_1,r_1}, H_{i_2,j_2,r_2}] = 0, \;\; [H_{i_1,j_1,0}, X_{i_2,j_2,r_2}^{\pm}] =
\pm c_{f(i_1,j_1),f(i_2,j_2)}  X_{i_2,j_2,r_2}^{\pm}, \]
   \[ [H_{i_1,j_1,r_1+1}, X_{i_2,j_2,r_2}^{\pm}]  =
[H_{i_1,j_1,r_1}, X_{i_2,j_2,r_2+1}^{\pm}],\;\;
[X_{i_1,j_1,r_1+1}^{\pm}, X_{i_2,j_2,r_2}^{\pm}]  =
[X_{i_1,j_1,r_1}^{\pm}, X_{i_2,j_2,r_2+1}^{\pm}], \]
\[ [X_{i_1,j_1,r_1}^+, X_{i_2,j_2,r_2}^{-}] =
\delta_{i_1=i_2}\delta_{j_1=j_2} H_{i_1,j_1,r_1+r_2},\;\;
 ad(X_{i_1,j_1,r_1}^{\pm})^{1-c_{f(i_1,j_1),f(i_2,j_2)}}(X_{i_2,j_2,r_2}^{\pm})
= 0  .\]
\end{proposition}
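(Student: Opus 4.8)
The plan is to establish the isomorphism $\mfk \iso \wh{\mfsl}_n(C)$ by exhibiting an explicit map on generators and then arguing it is both well-defined and bijective, leaning heavily on the already-proven presentation of $\wh{\mfsl}_n(A)$ in proposition \ref{DALApres}. First I would define a candidate homomorphism $\ka:\mfk \lra \wh{\mfsl}_n(C)$ by sending the generators to the same images used for the isomorphism $\tau$ of proposition \ref{DALApres}, restricted to the elements that actually lie in $\wh{\mfsl}_n(C)$. Concretely, $X_{i,j,r}^{\pm}$ and $H_{i,j,r}$ for $1\le i\le n-1$ go to $E_{i,i+1}w^r\mbe_j$, $E_{i+1,i}w^r\mbe_j$ and $(E_{ii}-E_{i+1,i+1})w^r\mbe_j$; the affine-node generators $X_{0,j,r}^{+}$, $X_{0,j,r+1}^{-}$, $H_{0,j,r+1}$ go to the corresponding expressions involving $u^{\mp 1}$ that land in $C=\C[u,v]\rtimes\Ga$ rather than requiring negative powers of $u$. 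The key point motivating the asymmetric index set (allowing only $r\in\Z_{\ge 0}$, and shifting the $r$-index on $X_{0,j}^{-}$ and $H_{0,j}$) is precisely that $\wh{\mfsl}_n(C)$ sits inside $\wh{\mfsl}_n(A)$ as the subalgebra where powers of $u$ and $w$ are suitably bounded, and one must check these images generate the whole of $\wh{\mfsl}_n(C)$.

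Next I would verify that $\ka$ is well-defined, i.e. that the images satisfy the listed relations. For the relations not involving the affine node, this follows verbatim from proposition \ref{DALApres} since the $r=0$ and $i\neq 0$ generators span an isomorphic copy of the finite-type data. For the relations involving $X_{0,j,r}^{\pm}$, the crucial structural input is that in $\mfk$ there is \emph{no} central term $\msc$: the relation $[X_{i_1,j_1,r_1}^+, X_{i_2,j_2,r_2}^{-}] = \delta_{i_1=i_2}\delta_{j_1=j_2} H_{i_1,j_1,r_1+r_2}$ has lost the $r_1\delta_{r_1+r_2=0}\msc$ summand present in proposition \ref{DALApres}. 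I would check that this is consistent because the $HC_1$-valued cocycle term $r_1\delta_{r_1+r_2=0}$ cannot be realized within $\wh{\mfsl}_n(C)$ for the restricted (non-negative, shifted) range of $r$-indices: the degrees forced by the shift never produce the balancing $r_1+r_2=0$ with a nonzero central contribution. This is the step where the precise bookkeeping of the index shift on the affine node pays off, and it is the natural place where proposition \ref{isotor} and the explicit bracket formula written just before proposition \ref{DALApres} get used to confirm the vanishing.

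To prove bijectivity I would follow the strategy of proposition 3.5 in \cite{MRY} as adapted in proposition \ref{DALApres}. Surjectivity amounts to checking that the images of the generators, together with their iterated brackets, recover all of $\mfsl_n(C)\oplus HC_1(C)$; this reduces to showing that products $E_{ab}u^iv^j\mbe_k$ with $i,j\ge 0$ are all hit, which one does by building up from the simple-root-type generators exactly as one builds the root vectors of an untwisted affine algebra, plus the computation of $HC_1(C)=v\C[u,v]^{\Ga}du$ realized as brackets of $X^{+}$ with $X^{-}$ at the affine node. For injectivity, the clean route is a triangular/PBW argument: decompose $\mfk$ using the defining relations into a negative part, a Cartan part spanned by the $H_{i,j,r}$, and a positive part, show the resulting spanning set maps to a basis of $\wh{\mfsl}_n(C)$ under $\ka$ (using the triangular decomposition \eqref{td3C}), and conclude that $\ka$ is injective because it carries a spanning set of $\mfk$ to a linearly independent set. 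The main obstacle I anticipate is exactly the affine-node bookkeeping in the second step: verifying that the relations of $\mfk$ (which omit the central cocycle and impose the $r$-shift on $X_0^{-}$, $H_0$) are simultaneously sufficient to generate and consistent with the embedding $\wh{\mfsl}_n(C)\into\wh{\mfsl}_n(A)$, without accidentally either collapsing or over-extending the center. Since the authors state the proof mirrors that of proposition \ref{DALApres} up to these few extra relations, I would emphasize the differences rather than reproduce the \cite{MRY} argument in full.
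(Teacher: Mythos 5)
Your overall plan (adapt the argument of \cite{MRY} behind proposition \ref{DALApres} directly to $C$, build an explicit homomorphism on generators, then prove bijectivity using the decomposition \eqref{td3C}) is in the right spirit, and your bookkeeping showing that the central summand $r_1\delta_{r_1+r_2=0}\msc$ can never be triggered for the allowed index ranges is correct. But there is a genuine gap at the very first step: the candidate map cannot be ``the same images as $\tau$, restricted to elements lying in $\wh{\mfsl}_n(C)$,'' because for the affine node those images simply do not lie in $\wh{\mfsl}_n(C)$. Under $\tau$ one has $X^{+}_{0,j,r}\mapsto E_{n1}u^{-1}w^{r}\mbe_j=E_{n1}u^{r-1}v^{r}\mbe_j$, which belongs to $C=\C[u,v]\rtimes\Ga$ only for $r\ge 1$ (so $X^{+}_{0,j,0}$ has no image), while $X^{-}_{0,j,r}\mapsto E_{1n}uw^{r}\mbe_{j-1}$ lies in $C$ for \emph{all} $r\ge 0$; moreover for $i=j=0$ the $\tau$-formulas involve $u^{\pm(2d-1)}$, so a literal restriction would force index bounds depending on $j$ and, worse, the shift would fall on the $+$ side rather than the $-$ side. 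In other words, restricting $\tau$ does not reproduce the generator set $X^{+}_{0,j,r},\,X^{-}_{0,j,r+1},\,H_{0,j,r+1}$ ($r\in\Z_{\ge 0}$) of the statement. To prove the proposition as stated you must choose new images at the affine node adapted to \eqref{td3C} (of the shape $X^{+}_{0,j,r}\mapsto E_{n1}u\,w^{r}\mbe_{\bullet}$ and $X^{-}_{0,j,r+1}\mapsto E_{1n}v\,w^{r}\mbe_{\bullet}$, so that the $r=0$ elements generate $\mfsl_n(\C[u]\rtimes\Ga)$ and the $X^{\pm}_{i,j,0}$, $X^{-}_{0,j,1}$ generate $\mfsl_n(\C[v]\rtimes\Ga)$, as indicated after the proposition); this choice is exactly where the asymmetric index set comes from, and all the relation checks, the identification of the positive/negative parts, and the treatment of the infinite-dimensional centre $HC_1(C)$ in your surjectivity/injectivity step depend on it. As written, your second and third paragraphs verify relations and count root vectors for a map that is not defined on all generators.

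For comparison, the paper does not argue directly with $C$: it observes that the proof of proposition \ref{DALApres} from \cite{MRY} also applies to $\wh{\mfsl}_n(B)$ with $B=\C[u^{\pm 1},v]\rtimes\Ga\cong\C[u^{\pm 1},w]\rtimes\Ga$, and then deduces the presentation of the subalgebra $\wh{\mfsl}_n(C)\subset\wh{\mfsl}_n(B)$ from that, referring to \cite{Gu3} for the details. Your direct route is viable and would make the argument self-contained, but only after the generator images are fixed as above; the remaining delicate point, which you correctly identify but do not resolve, is showing that the presented algebra $\mfk$ is a central extension of $\mfsl_n(C)$ with kernel exactly $HC_1(C)$ (neither collapsing nor enlarging the centre), which is the substantive content of the \cite{MRY}-type argument.
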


The proof of proposition \ref{DALApres} in \cite{MRY} works also for $\wh{\mfsl}_n(B)$ and it is possible to deduce from it proposition \ref{presC}. See \cite{Gu3} for more details.

The elements with $i\neq 0$ generate a Lie subalgebra isomorphic to
$\mfsl_n(\C[w]\rtimes\Ga)$, the $X_{i,j,r}^+, 0\le i\le n-1, 0\le j \le d-1, r\in\Z_{\ge
0}$ generate the positive part $\wh{\mfsl}_n(C)^+$  of the decomposition \eqref{td3C} and the
$X_{i,j,r}^-, X_{0,j,r+1}^-, 0\le i\le n-1, 0\le j \le d-1, r\in\Z_{\ge 0}$ generate the
negative part $\wh{\mfsl}_n(C)^-$. Note that $\wh{\mfsl}_n(C)^+ \cong \wh{\mfsl}_n(C)^-$ via
$X_{i,j,r}^+ \mapsto X_{i,j,r}^-$ for $1\le i\le n-1$ and $X_{0,j,r}^+ \mapsto
X_{0,j,r+1}^-$. The elements with $r=0$ generate a copy of $\mfsl_n(\C[u] \rtimes \Ga)$,
whereas the elements $X_{i,j,0}^{\pm}, X_{0,j,1}^-$ with $1\le i\le n-1, 0\le j\le d-1$
generate a Lie subalgebra isomorphic to $\mfsl_n(\C[v] \rtimes \Ga)$. For a fixed $0\le j\le d-1$, the elements
$X_{0,j,r}^+, X_{0,j,r+1}^-, H_{0,j,r+1}$ for all $r\in\Z_{\ge 0}$ generate a subalgebra of $\mfsl_2(\C[w])$ which, as a vector
space, is $\mfn_2^- w\C[w] \oplus \mfh_2 w\C[w] \oplus \mfn_2^+ \C[w]$ where the index $_2$ indicates the corresponding subalgebra of $\mfsl_2$. Let us denote this subalgebra of $\mfsl_2(\C[w])$ by $\check{\mfsl_2}(\C[w])$.

Integrability of representations of $\wh{\mfsl}_n(C)$ has the same meaning as in
definition \ref{intdef1}. As for $\wh{\mfsl}_n(A)$ and $\wh{\mfsl}_n(B)$, we have Verma
modules $M(\La)$ and their irreducible quotients $L(\La)$ for each pseudo-weight $\La =( \la_{i,j,r} \in\C )$ with $0 \le i\le n-1,r \in \Z, 0\le j\le d-1 $ but $r\ge 1$ if $i=0$; the highest weight cyclic generator is again denoted $v_{\Lambda}$. 

\begin{proposition}\label{irrintC}
The irreducible module $L(\La)$ is integrable if and only if $\la_{i,j,0} \in \Z_{\ge 0}$ for $1\le i\le n-1,
0\le j\le d-1$,  and there exist monic polynomials $P_{i,j}(z)$ for $0\le i\le n-1, 0\le j\le d-1$, such that
$\sum_{r\ge 1}\la_{i,j,r}z^{-r-1} = -\mathrm{deg}(P_{i,j}) z^{-1} + \frac{P'_{i,j}(z)}{P_{i,j}(z)}$ as formal power series and $P_{i,j}(z)$ is of
degree $\la_{i,j,0}$ if $1\le i\le n-1, 0\le j\le d-1$.
\end{proposition}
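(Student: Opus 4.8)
The plan is to reduce the integrability of $L(\La)$ to a family of independent rank-one conditions, one for each node $(i,j)$, in direct analogy with the affine statement of proposition \ref{irrepAB}, and then to recognise the asserted power-series identity as the generating function of the power sums of the roots of $P_{i,j}$. Throughout I would use the presentation of proposition \ref{presC} together with the subalgebra identifications recorded just after it, which are exactly what make the reduction to $\mfsl_2$ transparent. By definition \ref{intdef1}, integrability means that every real root vector acts locally nilpotently, and each such vector is supplied by one of the rank-one subalgebras below; this is precisely why, as for \ref{irrepAB}, the necessary condition is at bottom an $\mfsl_2$-computation that does not see the passage from the affine to the double affine setting.

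For necessity, fix a node $(i,j)$ with $1\le i\le n-1$. The elements $X^{\pm}_{i,j,r}, H_{i,j,r}$ with $r\ge 0$ span a copy of the current algebra $\mfsl_2\ot\C[w]$, with $X^+_{i,j,r},X^-_{i,j,r},H_{i,j,r}$ playing the roles of $e\ot w^r,f\ot w^r,h\ot w^r$, and $v_{\La}$ is a highest weight vector on which $h\ot w^r$ acts by $\la_{i,j,r}$. Local nilpotency of the honest $\mfsl_2$-triple $(X^+_{i,j,0},X^-_{i,j,0},H_{i,j,0})$ forces $\la_{i,j,0}\in\Z_{\ge 0}$, and the classification of integrable highest weight modules of $\mfsl_2\ot\C[w]$ (the evaluation-module description used in \cite{cl}) forces $(\la_{i,j,r})_{r\ge 0}$ to be the power sums $\la_{i,j,r}=\sum_k a_k^{\,r}$ of a multiset of $\la_{i,j,0}$ complex numbers $a_k$, i.e. of the roots of a monic $P_{i,j}(z)=\prod_k(z-a_k)$ of degree $\la_{i,j,0}$. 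For the node $i=0$ one restricts instead to $\check{\mfsl_2}(\C[w])$; here there is no generator $H_{0,j,0}$ and no integrality requirement, but the integrability analysis for this (parabolic-current) triangular decomposition carried out in \cite{cl, Ra1, yy} again pins down $(\la_{0,j,r})_{r\ge 1}$ as the power sums of the roots of a monic $P_{0,j}$ of now-unconstrained degree. In either case the stated identity is just the expansion $P'_{i,j}/P_{i,j}=\sum_k (z-a_k)^{-1}=\sum_{r\ge 0}\big(\sum_k a_k^{\,r}\big)z^{-r-1}$ with its $z^{-1}$-term $\mathrm{deg}(P_{i,j})z^{-1}$ moved to the other side, a routine computation.

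For sufficiency I would realise $\La$ on a tensor product of evaluation modules, as in \cite{cl}. Using the injection $C\into M_d(\C[s,t])$ recalled above, evaluation of $s$ and $t$ at scalars yields homomorphisms $\wh{\mfsl}_n(C)\lra\mfsl_{nd}$; pulling back finite dimensional irreducibles of $\mfsl_{nd}$ gives finite dimensional, hence integrable, modules on whose highest weight vector each $H_{i,j,r}$ acts by a power of the evaluation point. Choosing the evaluation points to be the roots $a_k$ of the $P_{i,j}$ (these may be zero, which is allowed here because $C$ uses non-negative powers of $u,v$) and the finite highest weights according to the $\la_{i,j,0}$, the highest weight vector of the tensor product has $H_{i,j,r}$-eigenvalue $\sum_k a_k^{\,r}=\la_{i,j,r}$; thus $L(\La)$ is a subquotient of an integrable module and is therefore integrable.

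The one genuinely new point, and the main obstacle, is the node $i=0$. The subalgebra $\check{\mfsl_2}(\C[w])=\mfn_2^- w\C[w]\oplus\mfh_2 w\C[w]\oplus\mfn_2^+\C[w]$ carries its positive part in all degrees $\ge 0$ but its Cartan and negative part only in degrees $\ge 1$, so the natural elements $X^+_{0,j,0},X^-_{0,j,1}$ do not close into an $\mfsl_2$-triple and no integer ``$\la_{0,j,0}$'' is available or needed. Checking that local nilpotency still forces $(\la_{0,j,r})_{r\ge 1}$ into the power-sum form, and that the evaluation construction can be arranged to hit exactly these shifted data, is where one must lean on the analysis of \cite{cl, Ra1, yy}; the remaining nodes are the standard current-algebra $\mfsl_2$ computation.
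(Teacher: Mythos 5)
Your overall strategy coincides with the paper's: reduce integrability to rank-one computations node by node, treat the nodes $1\le i\le n-1$ by the standard current-algebra $\mfsl_2\ot\C[w]$ argument of Chari \cite{ca}, and prove sufficiency by building an integrable quotient of $M(\La)$ from tensor products of evaluation modules as in \cite{cl}. The expansion $P'_{i,j}/P_{i,j}=\sum_{r\ge 0}\bigl(\sum_k a_k^r\bigr)z^{-r-1}$ and the resulting identification of $\la_{i,j,r}$ with power sums of the roots are also as in the paper.

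However, at the node $i=0$ --- which you correctly single out as the only genuinely new point --- your proposal has a real gap: you defer to the analysis of \cite{cl, Ra1, yy}, but those references treat $\wh{\mfsl}_n(A)$ and $\wh{\mfsl}_n(B)$ (equivalently the toroidal algebras over $\C[s^{\pm1},t^{\pm1}]$ and $\C[s^{\pm1},t]$), where the generator $X^-_{0,j,0}$ exists and Chari's Proposition 1.1 (Garland's identity, lemma 7.5 of \cite{Ga}) applies verbatim. For $\wh{\mfsl}_n(C)$ that generator is absent, so their analysis cannot simply be cited; this is precisely what the paper identifies as the obstruction. The paper's proof supplies the missing ingredient explicitly: an identity for $(X^+_{0,j,0})^r(X^-_{0,j,1})^r$ applied to $v_{\La}$, obtained by transporting Garland's identity through the automorphism of $\check{\mfsl_2}(\C[w])$ sending $E_{21}w^{r+1}\mapsto E_{12}w^{r}$, $E_{12}w^{r}\mapsto E_{21}w^{r+1}$ and $(E_{11}-E_{22})w^{r+1}\mapsto(E_{22}-E_{11})w^{r+1}$. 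This identity is what forces the shifted data $(\la_{0,j,r})_{r\ge1}$ into power-sum form, and it also explains the asymmetry in the statement you should account for: there is no integrality constraint tying $\deg P_{0,j}$ to a weight, since $\wh{\mfsl}_n(C)$ contains no $\mfsl_2$-copy $\{X^{\pm}_{0,j,0},H_{0,j,0}\}$, and $\deg P_{0,j}$ is instead characterized as the least $r$ with $(X^-_{0,j,1})^{r+1}v_{\La}=0$. Without some such replacement for Garland's identity in $\check{\mfsl_2}(\C[w])$, your necessity argument at $i=0$ does not go through.
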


Let us say a few words about the proof. The main difference with the cases
$\wh{\mfsl}_n(A)$ and $\wh{\mfsl}_n(B)$ is that we do not have the elements $X_{0,j,0}^-, \, 0\le j\le d-1$.
However, it is still possible to apply the same argument as in the affine $\mfsl_2$-case
\cite{ca}, modulo some small differences. For instance, proposition 1.1 in \cite{ca} is
fundamental for the rest of that article, but it cannot be applied in our case when $i=0$: what we need instead is an expression for $(X_{0,j,0})^r(X_{0,j,1}^{-})^r$.  Proposition 1.1 in \cite{ca} is a consequence of lemma 7.5 in \cite{Ga}. To obtain an expression for $(X_{0,j,0})^r(X_{0,j,1}^{-})^r$, we just have to apply the automorphism of $\check{\mfsl_2}(\C[w])$ given by $ E_{21}w^{r+1} \mapsto E_{12}w^r, \, E_{12}w^{r} \mapsto E_{21}t^{r+1}$ and $ (E_{11}-E_{22})t^{r+1} \mapsto (E_{22}-E_{11})t^{r+1} $ for $r\in\Z_{\ge 0}$.  We lose the condition that the degree of
$P_{0,j}(z)$ is $\la_{0,j}$ because $\wh{\mfsl}_n(C)$ does not contain the $\mfsl_2$-copies
generated by $X_{0,j,0}^{\pm}, H_{0,j,0}$. The degree of $P_{0,j}(z)$ is the smallest
integer $r$ such that $(X_{0,j,1}^-)^{r+1} v_{\Lambda}=0$. The proof of the sufficiency
of the condition in the proposition consists in the construction of an integrable
quotient of the Verma module $M(\Lambda)$ using tensor products of evaluation modules, as
in \cite{cl}. Note that also in the case of $\wh{\mfsl}_n(C)$, the degree of $P_{0,j}(z)$ is the smallest integer $r$ such that $(X_{0,j,1}^-)^{r+1}$ acts by zero on the cyclic highest weight vector.

\subsection{Weyl modules for $\mfsl_n(\C[u,v] \rtimes \Gamma)$}\label{Weylmod}

For a Lie algebra $\mfg$ and a commutative $\C$-algebra $\mcA$, we will often use the notation $\mfg(\mcA)$ or $\mfg\mcA$ to denote $\mfg \ot_{\C} \mcA$. If $\mathfrak{g} = \mathfrak{n}^+ \oplus
\mathfrak{H}\oplus \mathfrak{n}^-$  the Weyl modules \cite{FeLo} are certain representations of
$\mathfrak{g}(\mathcal{A})$ generated by a weight vector $v$
satisfying
$\big(\mathfrak{n}^+ (\mathcal{A})\big).v = 0$. (In this subsection, we consider only the
local Weyl modules, not the global ones.)  The motivation to study Weyl modules is
that they should be simpler to understand than the finite dimensional irreducible
modules. 
This is what happens in the quantum affine setup
where the Weyl modules for the affine Lie  
algebras are closely related to finite dimensional irreducible modules of the  
corresponding affine quantum group when $q\mapsto 1$ \cite{ChPr, ChLo}.
The definition of Weyl modules depends on the choice of a triangular
decomposition, but only the first of our triangular decompositions for cyclic double
affine Lie algebras seems appropriate.
It should be noted that we cannot use proposition \ref{isotor} to deduce results about
Weyl
modules for $\mfsl_n(A)$ in our context because, when $\Ga$ is non-trivial, the
isomorphism in that proposition does not map the triangular decomposition \eqref{td1}
of $\mfsl_n(A)$ to the decomposition of $\mfsl_{nd}(\C[s^{\pm 1},
t^{\pm 1}])$ considered in \cite{FeLo}, which is $\mfsl_{nd}(\C[s^{\pm
1}, t^{\pm 1}]) =\mfn_{nd}^-\C[s^{\pm 1}, t^{\pm
1}]\oplus\mfh_{nd}\C[s^{\pm 1}, t^{\pm 1}]\oplus\mfn_{nd}^+\C[s^{\pm
1}, t^{\pm 1}]$.

In this subsection, we need a stronger definition of integrability than the one presented
in \ref{intdef1}. It is the same as the one used in \cite{FeLo}.

\begin{definition}\label{intdef2}
A module $M$ over a Lie algebra of the type $\mfg \ot_{\C} \mcA$ is said to be integrable
if $M_{\mu}$ is non-zero for only finitely many $\mu \in P$.
\end{definition}

When $\mcA$ is the coordinate ring of an affine algebraic variety $X$, Weyl modules are
associated to multisets of
points of $X$. In the simplest case of a (closed) point, we have an augmentation $\mcA
\lra \C$. However, when it comes to the triangular decomposition \eqref{td1}, the middle
term is isomorphic to $\mfd \ot_{\C} M_d(\C[s^{\pm 1}, t^{\pm 1}])$, where $\mfd$ is the
abelian Lie algebra of the diagonal matrices in $\mfgl_n$. When $d=1$, we are exactly in
the same situation as in \cite{FeLo} (with $X$ the two-dimensional torus $\C^{\times}
\times \C^{\times}$), but when $d>1$, the Lie algebra is non-commutative. On new possibility is to consider maximal two-sided ideals in $A \rtimes \Ga$, or, equivalently, augmentation
maps. We are thus led to the following definition, which we can formulate in a more
general setting. 

\begin{definition}\label{lWeylmod} Let $\mcA$ be a commutative, finitely generated algebra
with a unit and let $G$ be a finite group acting on $\mcA$ by algebra automorphisms.
Consider an augmentation  $\epsilon$ of $\mcA\rtimes G$, that is, an algebra
homomorphism $ \mcA\rtimes G \lra \C$, and let $\la \in \mfh^*$ be a dominant integral 
weight. We define the Weyl module $W_{\mcA \rtimes G}^{\epsilon}$ to be the maximal
integrable cyclic $\mfsl_n(\mcA\rtimes G)$-module generated by a vector $v_{\la}$
such that, for $a \in \mcA\rtimes G $: \begin{equation*} (ha)(v_{\la}) = \la(h)
\epsilon(a) v_{\la}, \;\; \mfn^+ ( \mcA\rtimes G)(v_{\la}) = 0.\end{equation*}
\end{definition}

The existence of such a maximal module can be proved as in \cite{FeLo} using the notion
of global Weyl module. This definition agrees with the one used in the paper \cite{FeLo}
in the case $G=\{\mathrm{id}\}$. We note that, by $\mfsl_2$-theory, we have that
$f_i^{\la(h_i)+1} v_{\la}=0$ where, as usual, we denote by $f_i,h_i,e_i, 1\le i \le n-1$
the standard Chevalley generators of $\mfsl_n$.

It turns out that Weyl modules for the smash product $\mcA \rtimes G$ are related to
Weyl modules for a much smaller ring.  We have a decomposition of $G$-modules $\mcA =
\mcA^{G} \oplus \mcA'$, where $\mcA'$ is the subrepresentation without invariants.
Let us denote by $\ol{\mcA}$ the quotient of $\mcA$ by the two-sided ideal generated by
$\mcA'$.  Note that it may be much smaller than $\mcA^{G}$, it can even reduce to
$\C$, for instance, when $G$ is $\Z/d\Z$ and $\mcA = \C[u]$ or even $\mcA = \C[u,v]$.

Consider an augmentation  $\epsilon$ of $ \mcA \rtimes G $.  Note that $\mcA'
\subset [\mcA\rtimes G ,\mcA\rtimes G ]$, so $\epsilon(\mcA')=0$ and $\epsilon$
descends to an augmentation $\overline{\epsilon}$ of $\ol{\mcA}$.

\begin{theorem}\label{isoWeyl} Let $\la\in\mfh^*$ be a dominant integral weight. We have an
isomorphism of modules over $\mfsl_n(\mcA \rtimes G)$: \[ W_{\mcA\rtimes
\C[G]}^\epsilon(\lambda) \cong W_{\ol{\mcA}}^{\overline{\epsilon}} (\lambda). \]
\end{theorem}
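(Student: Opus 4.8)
The plan is to realise both sides as modules over a single Lie algebra through one surjection of rings, and then to play the two maximality properties off against each other. Put $\chi=\epsilon|_{G}$, a one--dimensional character of $G$ (forced, since $\epsilon$ lands in the commutative algebra $\C$ and each $g$ is a unit). Since $G$ acts trivially on $\ol{\mcA}$ (every $a-g(a)$ lies in $\mcA'$), the assignment $a\,g\mapsto \bar a\,\chi(g)$ is a surjective algebra homomorphism $p\colon \mcA\rtimes G\onto \ol{\mcA}$ with $\epsilon=\ol{\epsilon}\circ p$, and it induces a surjective Lie algebra homomorphism $\phi\colon \mfsl_n(\mcA\rtimes G)\onto\mfsl_n(\ol{\mcA})$. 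Pulling $W_{\ol{\mcA}}^{\ol{\epsilon}}(\lambda)$ back along $\phi$ gives an integrable cyclic $\mfsl_n(\mcA\rtimes G)$--module whose cyclic vector satisfies exactly the highest weight conditions of Definition \ref{lWeylmod} for $(\lambda,\epsilon)$: the weights are unchanged because $\phi$ is the identity on $\mfh\ot 1$, and $(ha)\bar v_\lambda=\lambda(h)\ol{\epsilon}(p(a))\bar v_\lambda=\lambda(h)\epsilon(a)\bar v_\lambda$. By maximality of $W:=W_{\mcA\rtimes G}^{\epsilon}(\lambda)$ this produces, for free, a surjection $W\onto W_{\ol{\mcA}}^{\ol{\epsilon}}(\lambda)$.

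The content of the theorem is the reverse direction, which I would get by showing that the action of $\mfsl_n(\mcA\rtimes G)$ on $W$ factors through $\phi$, i.e.\ that the ideal $\ker\phi$ acts by zero. Writing $R:=\mcA\rtimes G$ and $K:=\ker p$, one has $\ker\phi=\mfsl_n\ot K\oplus I\ot[R,R]$. Since $\ker\phi$ is a Lie ideal and $W$ is cyclic on $v_\lambda$, one has $\ker\phi\cdot W=U(\mfsl_n(R))\,(\ker\phi\cdot v_\lambda)$, so it suffices to prove $\ker\phi\cdot v_\lambda=0$. Three of the four cases are easy. The strictly upper triangular elements $E_{ij}\ot x$ ($i<j$) kill $v_\lambda$ because $\mfn^+(R)v_\lambda=0$; the Cartan elements $\mfh\ot x$ kill $v_\lambda$ because $\epsilon(K)=0$ (as $\epsilon=\ol\epsilon\circ p$); and the scalar part $I\ot[R,R]$ kills $v_\lambda$ because, realising $W$ as a quotient of the module induced from the one--dimensional representation of the middle term $\mfh R\oplus I[R,R]$ of the decomposition \eqref{td1}, that inducing character must annihilate $[R,R]\cdot I$ — indeed $I\ot[a,a']$ appears in $[\,h\ot a,\,h'\ot a'\,]$ with coefficient $\tfrac{1}{n}\operatorname{tr}(hh')$ modulo $\mfh\ot[a,a']$, so vanishing of the character on brackets together with $\epsilon([R,R])=0$ and nondegeneracy of the trace form on $\mfh$ forces it.

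The real work, and the step I expect to be the main obstacle, is the strictly lower triangular case: $E_{pq}\ot x\cdot v_\lambda=0$ for $p>q$ and $x\in K$. This cannot come from the commutation relations alone: over a purely commutative coefficient ring the operators $E_{pq}\ot a$ with $a\in\ker\epsilon$ do \emph{not} annihilate $v_\lambda$ (this is precisely why ordinary local Weyl modules carry the extra ``jet'' directions and are not trivial), so the argument must genuinely use the group. The mechanism I would exploit is that, once $G$ is present, every element of $\mcA'$ is a commutator — concretely $u^{j}=\frac{1}{\zeta^{j}-1}[\xi,u^{j}\xi^{-1}]$ in the cyclic case and $a-g(a)=[a g^{-1},g]$ in general, the identities already used in Section \ref{cycALA}. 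Hence each lowering operator $E_{pq}\ot x$ with $x\in K$ can be rewritten, via $E_{pq}\ot bc=[E_{pr}\ot b,E_{rq}\ot c]$ with an auxiliary index $r$ (for $n\ge 3$; the rank--two case uses the extra generators $H_{12}$ of the Kassel--Loday presentation), as a combination of brackets in which one factor is raising and the surviving scalar contributions all have the form $I\ot[\,\cdot\,,\cdot\,]$, which was just shown to kill $v_\lambda$. One then invokes integrability in the strong sense of Definition \ref{intdef2}: because $W$ has only finitely many $\mfh$--weights and each lowering operator drops the weight by a fixed root independent of its ring coefficient, $W$ is finite in the $\mfsl_n$--directions and the relevant $\mfsl_2$ (Garland / Chari--Pressley) computations close up. The crux is to check that the commutator rewriting together with integrability really \emph{annihilates} $E_{pq}\ot x\cdot v_\lambda$ rather than merely reorganising it; this is the delicate point, since the naive relations among the $[E_{pq}\ot b,E_{qp}\ot c]$ turn out to be tautological and carry no information on their own.

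Once $\ker\phi$ is shown to annihilate $W$, the action descends to $\mfsl_n(\ol{\mcA})$, and $W$ becomes an integrable cyclic $\mfsl_n(\ol{\mcA})$--module whose cyclic vector satisfies the highest weight conditions for $(\lambda,\ol{\epsilon})$; maximality of $W_{\ol{\mcA}}^{\ol{\epsilon}}(\lambda)$ then yields a surjection in the opposite direction. Both surjections send highest weight vector to highest weight vector, so by cyclicity their composites fix $v_\lambda$ and are the identity, whence they are mutually inverse isomorphisms and $W_{\mcA\rtimes G}^{\epsilon}(\lambda)\cong W_{\ol{\mcA}}^{\ol{\epsilon}}(\lambda)$ as $\mfsl_n(\mcA\rtimes G)$--modules.
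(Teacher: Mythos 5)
Your framing coincides with the paper's: both directions come from the universal property, and everything reduces to showing that $\mfsl_n\otimes J_\epsilon$ kills the cyclic vector $v_\lambda$, with the $\mfn^+$, $\mfh$ and scalar cases being immediate (your observation that $(I\otimes[a,a'])v_\lambda=0$ follows from $[h\otimes a,h'\otimes a']$ with $\operatorname{tr}(hh')\neq 0$ is fine, and in fact the paper does not even need to isolate it). The problem is that the one step carrying all the content --- $(E_{pq}\otimes x)v_\lambda=0$ for $p>q$ and $x\in J_\epsilon$ --- is not proved. You yourself flag it: the proposed rewriting $E_{pq}\otimes bc=[E_{pr}\otimes b,E_{rq}\otimes c]$ always leaves a lowering factor that does not annihilate $v_\lambda$, so it only reorganises the expression, and the appeal to ``integrability plus Garland/Chari--Pressley computations closing up'' is a hope, not an argument. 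As written, the proposal establishes the easy surjection $W^\epsilon_{\mcA\rtimes G}(\lambda)\onto W^{\ol\epsilon}_{\ol\mcA}(\lambda)$ and nothing more.

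The missing mechanism in the paper is quite different from commutator rewriting, and it is worth recording what it is. First, integrability is used in the precise form $f^{\lambda(h)+1}v_\lambda=0$, and applying $e\otimes P$ repeatedly with $\epsilon(P)=0$ (a Garland-type computation, lemma \ref{sym}) gives the genuinely new relation $(f\otimes P^{\lambda(h)})v_\lambda=0$ for every $P$ in the augmentation kernel. Second, a multiplicativity lemma (lemma \ref{mul}): from $(f\otimes P)v_\lambda=0$ one gets $(f\otimes(PQ+QP))v_\lambda=0$ for $Q\in\mcA$, and $(f\otimes PQ)v_\lambda=(f\otimes QP)v_\lambda=0$ when $Q\in[\mcA\rtimes G,\mcA\rtimes G]$. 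Third, one exploits the semisimplicity of $\C[G]$: the augmentation ideal $I_\epsilon\subset\C[G]$ satisfies $I_\epsilon^{k}=I_\epsilon$, so the power relation from lemma \ref{sym} already covers all of $I_\epsilon$, hence all of $\C[G]$ after adjusting by the idempotent $c_\epsilon$; lemma \ref{mul} then handles $\mcA(x-\epsilon(x))$. Finally $\mcA'$ is killed because $(f\otimes\gamma a)v_\lambda=(f\otimes a\gamma)v_\lambda=\epsilon(\gamma)(f\otimes a)v_\lambda$ forces $(f\otimes(a-\gamma(a)))v_\lambda=0$, and the elements $a-\gamma(a)$, $a\in\mcA'$, $\gamma\in G$, span $\mcA'$ (average over $G$). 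None of these ingredients --- the power identity from lemma \ref{sym}, the idempotency of $I_\epsilon$, the spanning of $\mcA'$ by twists --- appears in your sketch, and without them the lower-triangular case, and hence the theorem, is not established.
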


We have a surjective map $ \mcA \rtimes G \to \ol{\mcA}$, hence $\mfsl_n(\mcA
\rtimes G)$ acts on $W_{\ol{\mcA}}^{\overline{\epsilon}} (\lambda)$ and this yields
a surjective map $W_{\mcA \rtimes G}^\epsilon(\lambda) \to
W_{\ol{\mcA}}^{\overline{\epsilon}} (\lambda)$ of modules over $\mfsl_n(\mcA \rtimes
G)$. The ring $\ol{\mcA}$ is the quotient of $\mcA \rtimes G$ by the ideal
$J_\epsilon$ generated by $\mcA'$ and
elements $\gamma -\epsilon(\gamma)$ in $\C[G]$ for $\gamma \in G$. The only
thing we need to show is that $\mfsl_n \otimes J_\epsilon$ acts on $W_{\mcA \rtimes
G}^\epsilon(\lambda)$ by zero.

Actually, since $\mfsl_n \otimes J_\epsilon$ is an ideal, it is enough to show that it
acts by zero on the highest weight vector $v_{\la}$. As $\epsilon(J_\epsilon) = 0$, this
is true for $\mfb \otimes J_\epsilon$, so it remains to prove our claim for $E_{ij}
\otimes \epsilon$ with $i>j$. Now the question is reduced to $\mfsl_2$-case, so, to simplify
the notation, let us set $f=E_{ij}$, $e = E_{ji}$ and $h = E_{ii}-E_{jj}$.

\begin{lemma}\label{sym}
Let $P \in \mcA \rtimes \Gamma$. If $\epsilon(P)= 0$, then $(f \otimes P^{\la(h)}) v_\lambda = 0$.
\end{lemma}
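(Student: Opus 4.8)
The assertion is a statement purely about the $\mfsl_2$-triple $e=E_{ji}$, $h=E_{ii}-E_{jj}$, $f=E_{ij}$ (with $i>j$) tensored with $\mcA\rtimes\Ga$, acting on the cyclic vector $v_\la$. Put $N=\la(h)\in\Z_{\ge 0}$. The first point is that only \emph{powers of the single element} $P$ enter, and since $P^aP^b=P^{a+b}=P^bP^a$, all the brackets and products needed stay inside $\mfsl_2\ot\C[P]$, where $\C[P]\subseteq\mcA\rtimes\Ga$ is the \emph{commutative} subalgebra generated by $P$. (That the copies of $f$ commute among themselves is automatic, since $E_{ij}^2=0$ for $i\neq j$, so $[f\ot a,f\ot b]=0$ for all $a,b$.) Hence the problem reduces to a statement about the current algebra $\mfsl_2\ot\C[z]$, pulled back along $z\mapsto P$, and the noncommutativity of $\mcA\rtimes\Ga$ is irrelevant.

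Next I would record how this current algebra acts on $v_\la$. Since $\epsilon$ is an algebra homomorphism and $\epsilon(P)=0$, one has $\epsilon(P^r)=\epsilon(P)^r=0$ for $r\ge 1$. Therefore, by Definition \ref{lWeylmod},
\[
(e\ot P^r)v_\la=0\ (r\ge 0),\qquad (h\ot P^r)v_\la=\la(h)\,\epsilon(P^r)\,v_\la=
\begin{cases} N v_\la & r=0,\\ 0 & r\ge 1.\end{cases}
\]
Moreover $v_\la$ is a highest weight vector of weight $N$ for the copy $\mfsl_2\ot 1$ (it is annihilated by $e\ot 1$), so finite dimensional $\mfsl_2$-theory gives $(f\ot 1)^{N+1}v_\la=0$, as already noted before the statement.

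The engine is now Garland's identity (Lemma 7.5 of \cite{Ga}, the same computation underlying Proposition 1.1 of \cite{ca}). Writing $x^{(k)}=x^k/k!$ for divided powers, it yields in $U(\mfsl_2\ot\C[z])$ a congruence of the form
\[
(e\ot z)^{(N)}\,(f\ot 1)^{(N+1)}\ \equiv\ c\,(f\ot z^{N}),\qquad c\neq 0,
\]
modulo the left ideal generated by $\mfn^+\ot\C[z]$ together with the Cartan currents $h\ot z^r$ for $r\ge 1$. Applying both sides to $v_\la$: the left-hand side is $0$ because $(f\ot 1)^{N+1}v_\la=0$, while every generator of the ideal annihilates $v_\la$ by the formulas above. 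Hence $c\,(f\ot P^{N})v_\la=0$, and since $c\neq 0$ we get $(f\ot P^{\la(h)})v_\la=0$, which is the claim.

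The substance of the proof is the Garland congruence itself: one commutes the $N$ factors $e\ot z$ rightward through $(f\ot 1)^{N+1}$ and checks that, after the terms lying in $\mfn^+\ot\C[z]$ and the positive Cartan currents $h\ot z^r$ are discarded, the unique surviving contribution is a single copy of $f\ot z^{N}$. The two structural facts that make this work here are the commutativity of $\C[P]$, which reduces everything to the classical current-algebra computation, and the vanishing $(h\ot P^r)v_\la=0$ for $r\ge 1$, which kills all the intermediate Cartan-current terms. For a self-contained argument one proves the congruence by induction on $N$, descending one step at a time: a single application of $e\ot z$ to $(f\ot 1)^{N+1}v_\la=0$ produces a mixed relation with one fewer factor of $f\ot 1$, and iterating $N$ times isolates $f\ot z^{N}$. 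This bookkeeping is the only delicate part.
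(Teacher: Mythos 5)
Your proof is correct and follows essentially the same route as the paper: the paper's argument is precisely the iterated application of $e\otimes P$ to $(f\otimes 1)^{\la(h)+1}v_\la=0$, using $\epsilon(P^r)=0$ to kill the Cartan-current terms, which is exactly your Garland-type computation in $\mfsl_2\otimes\C[P]$. Your write-up merely makes explicit (via the reduction to the commutative current algebra and the citation of Garland's lemma) the bookkeeping the paper leaves implicit.
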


\begin{proof}
We already know that $f^{\la(h)+1} v_{\lambda}=0$. Applying $e\ot P$ $j$ times and using
the assumption $\epsilon(P)= 0$ yields $(f^{\la(h)+1-j} \ot P^j) v_{\lambda}=0$, so
taking $j=\la(h)$ proves the lemma.
\end{proof}

\begin{lemma}\label{mul}
If $(f \otimes P) v_\lambda = 0$ and $Q\in\mcA$, then $\big( f \otimes (PQ+QP)\big)
v_\lambda = 0$.
Moreover, if $Q$ belongs to the commutator $ [ \mfsl_n(\mcA \rtimes G),\mfsl_n(\mcA
\rtimes G)]$, then
$(f \otimes PQ) v_\lambda = (f \otimes QP) v_\lambda =0$.
\end{lemma}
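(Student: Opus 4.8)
The plan is to deduce both claims by bracketing $f\ot P$ against a well-chosen diagonal element of $\mfsl_n(\mcA\rtimes G)$ and evaluating on $v_\lambda$, exploiting the two defining properties of the cyclic generator in Definition \ref{lWeylmod}: $(f'\ot a)v_\lambda=0$ for $f'\in\mfn^+$, and $(h'\ot a)v_\lambda=\lambda(h')\epsilon(a)v_\lambda$ for $h'\in\mfh$ and any $a\in\mcA\rtimes G$.

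For the first assertion I would compute, using the associative product in $\mfsl_n(\mcA\rtimes G)$ together with $hE_{ij}=E_{ij}$ and $E_{ij}h=-E_{ij}$ (where $h=E_{ii}-E_{jj}$), that $[\,h\ot Q,\ f\ot P\,]=f\ot(PQ+QP)$. Evaluating this on $v_\lambda$ and expanding the commutator, the summand $(h\ot Q)(f\ot P)v_\lambda$ vanishes because $(f\ot P)v_\lambda=0$ by hypothesis, while $(f\ot P)(h\ot Q)v_\lambda=\lambda(h)\epsilon(Q)\,(f\ot P)v_\lambda=0$ because $v_\lambda$ is an eigenvector of $h\ot Q$. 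Hence $(f\ot(PQ+QP))v_\lambda=0$.

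For the second assertion, the idea is to also kill the antisymmetric combination $f\ot(PQ-QP)$ and then separate $PQ$ from $QP$. Reading the hypothesis on $Q$ as $Q\in[\mcA\rtimes G,\mcA\rtimes G]$, the scalar matrix $I\ot Q$ is an element of $\mfsl_n(\mcA\rtimes G)$ (recall that $\mfsl_n(A)$ contains precisely the scalar matrices with entries in $[A,A]$), and a direct computation gives $[\,I\ot Q,\ f\ot P\,]=f\ot(QP-PQ)$. The crucial preliminary step is that $I\ot Q$ annihilates $v_\lambda$. To see this I would write $Q$ as a sum of commutators $[a,b]$; since $v_\lambda$ is a simultaneous eigenvector of $\mfh\ot(\mcA\rtimes G)$ we get $[\,h\ot a,\ h\ot b\,]v_\lambda=0$, while $[\,h\ot a,\ h\ot b\,]=h^2\ot[a,b]=(E_{ii}+E_{jj})\ot[a,b]$. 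Decomposing $E_{ii}+E_{jj}=\big(E_{ii}+E_{jj}-\tfrac{2}{n}I\big)+\tfrac{2}{n}I$ into a traceless diagonal matrix lying in $\mfh$ plus a multiple of $I$, and using $\epsilon([a,b])=0$, the $\mfh$-part acts on $v_\lambda$ by $\lambda\big(E_{ii}+E_{jj}-\tfrac{2}{n}I\big)\epsilon([a,b])=0$; therefore $\tfrac{2}{n}(I\ot[a,b])v_\lambda=0$, giving $(I\ot Q)v_\lambda=0$.

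With this established, evaluating $[\,I\ot Q,\ f\ot P\,]=f\ot(QP-PQ)$ on $v_\lambda$ makes both resulting summands vanish, the first by the hypothesis $(f\ot P)v_\lambda=0$ and the second because $I\ot Q$ kills $v_\lambda$, so $(f\ot(QP-PQ))v_\lambda=0$. Adding and subtracting this from the identity $(f\ot(PQ+QP))v_\lambda=0$ of the first assertion isolates $(f\ot PQ)v_\lambda=(f\ot QP)v_\lambda=0$. The step I expect to be the real obstacle is proving $(I\ot Q)v_\lambda=0$: the weight-$\lambda$ space of the Weyl module need not be one-dimensional, so this cannot simply be read off from a naive weight argument, and it is precisely the extraction of the $I$-component of $h^2$ together with the vanishing of $\epsilon$ on commutators that makes it work.
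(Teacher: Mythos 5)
Your proposal is correct and is essentially the paper's own argument: the first assertion is obtained by the same bracket computation $[h\ot Q, f\ot P]=f\ot(PQ+QP)$ evaluated on $v_\lambda$, and the second rests on the same key identity $(E_{ii}+E_{jj})\ot[a,b]=[h\ot a,h\ot b]$, which annihilates $v_\lambda$ because $v_\lambda$ is a common eigenvector for $\mfh\ot(\mcA\rtimes G)$. The only cosmetic difference is that you peel off the scalar matrix $I\ot Q$ (using $\epsilon([a,b])=0$ to kill the traceless diagonal part) and bracket it with $f\ot P$ to get $(f\ot(QP-PQ))v_\lambda=0$, which you then combine with the first assertion, whereas the paper applies $E_{ii}\ot Q$ and $E_{jj}\ot Q$ directly to $(f\ot P)v_\lambda=0$.
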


\begin{proof}
Applying $f\ot P$ to both sides of $(h \ot Q) v_{\la} = \la(h)\epsilon(Q) v_{\la}$ yields
the first equality. If now, say, $Q=Q_1Q_2-Q_2Q_1$, then $ (E_{ii}+E_{jj}) \ot Q = [h\ot
Q_1, h\ot Q_2]$; starting from $( f \otimes P ) v_\lambda = 0$ and applying $E_{ii}\ot Q,
E_{jj} \ot Q$, we obtain the second equality. (Note that $E_{ii} \ot Q$ belongs to
$\mfsl_2( \mcA \rtimes G)$ since, by assumption, $Q \in [ \mfsl_n(\mcA \rtimes
G),\mfsl_n(\mcA \rtimes G) ]$.)
\end{proof}

\begin{proof}[Proof of theorem \ref{isoWeyl}] First let us show  that
\begin{equation}\label{grz}
(f \otimes (x - \epsilon(x))) v_\lambda =0 \quad \mbox{for}\  x \in \C[G].
\end{equation} Note that $\C[G] = \C c_\epsilon \oplus I_\epsilon$, where
$c_\epsilon^2 = c_\epsilon$,
$\epsilon(c_\epsilon)=1$, and $I_\epsilon$ is the kernel of $\epsilon_{| \C[G]}$.
We have $I_\epsilon^{l+1} = I_\epsilon$, so lemma \ref{sym} yields equation \eqref{grz}
for $x \in I_\epsilon$. Moreover, note that $c_\epsilon -1$ belongs to $I_\epsilon$, so
we have \eqref{grz} also for $x = c_\epsilon$ and, therefore, for any $x$.

Since $\mcA' \in [ \mfsl_n(\mcA \rtimes G),\mfsl_n(\mcA \rtimes G) ]$ and
$\mcA^G$ commutes with $\C[G]$, lemma \ref{mul} implies that $(f \otimes \mcA(x -
\epsilon(x))) v_\lambda =0$.

It remains to show that $(f \otimes \mcA')v_\lambda = 0$.  By lemma \ref{mul}, for any $a
\in \mcA'$, $\gamma \in G$, we have \[ (f\otimes \gamma a) v_\lambda = (f\otimes
a\gamma ) v_\lambda = (f\otimes a \epsilon(\gamma))v_\lambda. \]
So \[  \big( f\otimes (a - \gamma^{-1}(a))\big)v_\lambda = \frac{1}{\epsilon(\gamma)}
(f\otimes \big( \gamma a - a \gamma)\big) v_\lambda = 0. \] At last, note that the
elements $(a - \gamma^{-1}(a))$ for $a \in \mcA'$, $\gamma \in G$, span $\mcA'$.
\end{proof}

\subsection{Weyl modules associated to rings of invariants}\label{Weylmodinv}

When $\mcA$ is a commutative, unital, finitely generated $\C$-algebra, Weyl modules for $\mfsl_n\ot_{\C} \mcA$ can be attached to multisets of points in $\mathrm{Spec}(\mcA)$ or, more generally, to any ideal in $\mcA$. In this subsection, we first apply an approach due to Feigin-Loktev \cite{FeLo} and Chari-Pressley \cite{ChPr} to describe certain local Weyl modules for $\mfsl_n(\C[u,v])$ and $\mfsl_n(\C[u,v]^{\Gamma})$: this approach realizes them as the Schur-Weyl duals of certain modules of co-invariants.  A natural question to ask is: what is the dimension of these local Weyl modules? For loop algebras $\mfsl_n(\C[u,u^{-1}])$, this question was fully answered in \cite{ChLo}. For $\mfsl_n(\C[u,v])$ and a multiple of the fundamental weight of the natural representation of $\mfsl_n$ on $\C^n$, this problem was solved in \cite{FeLo}, but the answer relies on the difficult theorem of M. Haiman on the dimension of diagonal harmonics \cite{Ha2}. To compute the dimension of the Weyl modules that we introduce below, we would need an extension of Haiman's theorem to certain rings of coinvariants attached to wreath products of the cyclic group $\Z/d\Z$, but this is still an open problem as far as we know.  At least, we are able to provide a lower bound for the dimension of some local Weyl modules by using a theorem of R. Vale \cite{Va}, which generalizes an earlier result of I. Gordon \cite{Go1}.

\begin{definition}
Let $U$ be a representation of $\mfsl_n \ot \mcA$ and $\mu \in \mfh^*$. Suppose that we have an augmentation map $\epsilon: \mcA \lra \C$. A vector $v_{\mu} \in U$ is called a highest weight vector if $(g\ot a) v_{\mu} =0$ when $g\in\mfn^+, a\in\mcA$ and $(h\ot a) v_{\mu} = \mu(h) \epsilon(a) v_{\mu}$ for all $h\in\mfh, a\in \mcA$.
\end{definition}

\begin{theorem}\cite{FeLo}
Suppose that $\mu \in \mfh^*$ is a dominant integral weight. There exists a universal finite dimensional module $W_{\eps}^{\mcA}(\mu)$ such that any finite dimensional representation of $\mfsl_n \ot \mcA$ generated by a highest weight vector $v_{\mu}$ is a quotient of $W_{\eps}^{\mcA}(\mu)$. 
\end{theorem}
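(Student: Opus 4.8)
The statement asserts the existence of the \emph{local Weyl module} $W^{\mcA}_\eps(\mu)$, and the plan is to produce it as an explicit cyclic quotient of a universal highest weight module and then to prove finiteness by realizing it as the specialization of a \emph{global} Weyl module, following \cite{FeLo} (and \cite{ChPr} in the one-variable case). Throughout write $\mfb=\mfn^+\oplus\mfh$ and $\mfm=\ker\eps$.

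\emph{Construction and universality.} Let $\C v_\mu$ be the one-dimensional $\mfb\ot\mcA$-module on which $\mfn^+\ot\mcA$ acts by zero and $h\ot a$ acts by $\mu(h)\eps(a)$; this is well defined because $[\mfb\ot\mcA,\mfb\ot\mcA]\subseteq\mfn^+\ot\mcA$. Set $M=U(\mfsl_n\ot\mcA)\ot_{U(\mfb\ot\mcA)}\C v_\mu$, the universal highest weight module, which by the triangular decomposition $\mfsl_n\ot\mcA=(\mfn^-\ot\mcA)\oplus(\mfh\ot\mcA)\oplus(\mfn^+\ot\mcA)$ and PBW is free of rank one over $U(\mfn^-\ot\mcA)$. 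Define $W^{\mcA}_\eps(\mu)=M/R$, where $R$ is the submodule generated by the vectors $(f_i\ot1)^{\mu(h_i)+1}v_\mu$, $1\le i\le n-1$. Any finite dimensional module $V$ generated by a highest weight vector with the stated data is a quotient of $M$ by the universal property of induction, and since $\mu(h_i)\in\Z_{\ge0}$, $\mfsl_2$-theory applied to the subalgebra $\langle e_i\ot1,f_i\ot1,h_i\ot1\rangle$ forces $(f_i\ot1)^{\mu(h_i)+1}$ to kill the generator of $V$; hence $V$ is a quotient of $W^{\mcA}_\eps(\mu)$. Thus, once finite-dimensionality is established, $W^{\mcA}_\eps(\mu)$ is itself such a module and is the required universal object.

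\emph{Finiteness.} Two things must be bounded. First, the weight support: restricting to $\mfg=\mfsl_n\ot1$, the relations $(f_i\ot1)^{\mu(h_i)+1}v_\mu=0$ make $W^{\mcA}_\eps(\mu)$ an integrable $\mfsl_n$-module, since the subspace of $\mfg$-locally finite vectors is stable under $\mfsl_n\ot\mcA$ (the adjoint $\mfg$-action preserves each finite dimensional slice $\mfg\ot a$), so it contains $v_\mu$ and is therefore everything; consequently all weights lie in the finite set of weights below $\mu$ inside the convex hull of the Weyl-group orbit of $\mu$. Second, each weight space must be finite-dimensional. For this I would introduce the global Weyl module $W^{\mathrm{gl}}(\mu)$, defined by the same relations except that only $h\ot1$ (not all of $\mfh\ot\mcA$) is required to act by the scalar $\mu(h)$; the image of $U(\mfh\ot\mcA)$ on the cyclic generator is a commutative algebra $\mbA_\mu$ that acts on $W^{\mathrm{gl}}(\mu)$ by $\mfsl_n\ot\mcA$-module endomorphisms, and $W^{\mcA}_\eps(\mu)\cong W^{\mathrm{gl}}(\mu)\ot_{\mbA_\mu}\C_\eps$ is its specialization at the character $h\ot a\mapsto\mu(h)\eps(a)$. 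The structural input is that $\mbA_\mu$ is a finitely generated (hence Noetherian) commutative algebra and that $W^{\mathrm{gl}}(\mu)$ is a finitely generated $\mbA_\mu$-module, so each weight space $W^{\mathrm{gl}}(\mu)_\nu$ is a finitely generated $\mbA_\mu$-module and its specialization $W^{\mathrm{gl}}(\mu)_\nu\ot_{\mbA_\mu}\C_\eps$ is finite-dimensional. Combined with the finite support, this gives $\dim_\C W^{\mcA}_\eps(\mu)<\infty$.

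The main obstacle is precisely this last structural statement: that $W^{\mathrm{gl}}(\mu)$ is finitely generated over a finitely generated commutative algebra $\mbA_\mu$. This is exactly where the hypothesis that $\mcA$ is finitely generated enters, via Garland-type identities in the rank-one subalgebras $\langle e_i\ot a,f_i\ot a,h_i\ot a\rangle$: these identities express the elements $f_i\ot b$ with $b\in\mfm^{N}$, $N\gg0$, in terms of lower ones, so that only the modes in a fixed finite dimensional truncation $\mcA/\mfm^{N}$ are needed while the number of factors of each $f_\beta$ is bounded by integrability. Carrying out this reduction, as in \cite{FeLo}, is the technical heart of the argument; everything else is formal.
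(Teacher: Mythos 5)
The paper itself offers no proof of this statement (it is quoted from \cite{FeLo}, and the paper's only gloss, after Definition \ref{lWeylmod}, is that existence ``can be proved as in \cite{FeLo} using the notion of global Weyl module''), and your outline reconstructs precisely that strategy: induce the universal highest weight module, impose $(f_i\ot 1)^{\mu(h_i)+1}v_\mu=0$, deduce finiteness of the weight support from $\mfsl_n$-integrability, and obtain finite-dimensional weight spaces by viewing the module as the specialization of the global Weyl module, which is finitely generated over the finitely generated commutative algebra $\mbA_\mu$ — with the finite-generation step (Garland-type identities and the truncation $\mcA/\mfm^N$, where finite codimension of $\mfm^N$ uses that $\mcA$ is finitely generated) correctly identified as the technical heart and deferred to \cite{FeLo}. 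Your proposal is correct and is essentially the same approach as the cited source.
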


Let us recall a general result of \cite{FeLo}, see also \cite{Lo}.  The symmetric group $S_l$ acts on $\mcA^{\ot l}$ and we can form the quotient $\mathsf{DH}_l(\mcA) = \mcA^{\ot l} / ( \mathrm{Sym}^l(\mcA)_{\eps})$ of $\mcA^{\ot l}$ by the ideal generated by the tensors invariant under the action of $S_l$ and which are in the kernel of $\eps$ (extended as an augmentation $\mathrm{Sym}(\mcA)$). When $\mcA = \C[u,v]$, this quotient is called the space of diagonal co-invariants.  

If $E$ is a representation of $S_l$, denote by $\mathsf{SW}_l^n(E)$ the representation of $\mfsl_n$ given by $\big( (\C^n)^{\ot l} \ot E \big)^{S_l}$. This is the classical Schur-Weyl construction. Note that $ \mathsf{DH}_l(\mcA) $ is a representation of $S_l$. More generally, as is observed in \cite{Lo}, if $E$ is a representation of the smash product $(\mcA^{\ot l}) \rtimes S_l $, then $\mathsf{SW}_l^n(E)$ is a representation of $\mfsl_n(\mcA)$. 

Let $\omega_1$ be the fundamental weight of $\mfsl_n$ which is the highest weight of its natural representation $\C^n$.

\begin{theorem}\cite{FeLo}
The Weyl module $W_{\eps}^{\mcA}(l\omega_1)$ of $\mfsl_n(\mcA)$ is isomorphic to $\mathsf{SW}_l^n( \mathsf{DH}_l(\mcA) )$.
\end{theorem}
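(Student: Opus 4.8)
The plan is to realize both sides as the specialization at $\eps$ of one large module over $\mfsl_n(\mcA)$, and to match them by an averaging argument; the only nontrivial input is that this large module is the global Weyl module. Write $M = \Sym^l(\C^n\ot\mcA) = \big((\C^n)^{\ot l}\ot\mcA^{\ot l}\big)^{S_l}$, with the $\mfsl_n(\mcA)$-action of \cite{Lo}: $x\ot a$ acts by $\sum_{k=1}^l \rho_k(x)\,a_k$, where $\rho_k(x)$ acts on the $k$-th copy of $\C^n$ and $a_k$ denotes $a$ placed in the $k$-th slot of $\mcA^{\ot l}$. Let $Z = (\mcA^{\ot l})^{S_l} = \Sym^l(\mcA)$ act by multiplication on the factor $\mcA^{\ot l}$; this commutes with both $\mfsl_n(\mcA)$ and $S_l$, and $\eps$ determines a character $\C_{\eps} = Z/Z_+$ with $Z_+ = \Sym^l(\mcA)_{\eps}$. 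By definition $\mathsf{DH}_l(\mcA) = \mcA^{\ot l}\ot_Z\C_{\eps}$. Since $\mathrm{char}\,\C = 0$, the Reynolds projector $\pi = \frac{1}{l!}\sum_{\sigma\in S_l}\sigma$ is $Z$-linear, so $(-)^{S_l} = \pi(-)$ commutes with the right-exact functor $-\ot_Z\C_{\eps}$. As $\mfsl_n(\mcA)$-modules this gives
\[ M\ot_Z\C_{\eps} = \Big((\C^n)^{\ot l}\ot\mcA^{\ot l}\Big)^{S_l}\ot_Z\C_{\eps} \cong \Big((\C^n)^{\ot l}\ot\mathsf{DH}_l(\mcA)\Big)^{S_l} = \mathsf{SW}_l^n(\mathsf{DH}_l(\mcA)). \]

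Next I produce a highest weight vector and a surjection. Let $e_1$ be the highest weight vector of $\C^n$ and set $v = (e_1\ot 1)^{\ot l}\in M$. For $x\in\mfn^+$ one has $\rho_k(x)e_1 = 0$, so $(\mfn^+\ot\mcA)v = 0$; and $(h\ot a)v = \om_1(h)\,p(a)\,v$, where $p(a) = \sum_k a_k\in Z$ satisfies $\eps^{\ot l}(p(a)) = l\eps(a)$. Hence the image $\bar v$ of $v$ in $M\ot_Z\C_{\eps}\cong\mathsf{SW}_l^n(\mathsf{DH}_l(\mcA))$ is a highest weight vector of weight $l\om_1$ for the augmentation $\eps$. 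Because $S_l$ is finite and $\mcA^{\ot l}$ is a finite module over $Z$ (Noether), $\mathsf{DH}_l(\mcA)$ is finite-dimensional, so $\mathsf{SW}_l^n(\mathsf{DH}_l(\mcA))$ is finite-dimensional and integrable in the sense of Definition \ref{intdef2}. It is also generated by $\bar v$: starting from $v$ and applying the operators $E_{c1}\ot a$ and $E_{1c}\ot a$ (for $c\neq 1$), an induction on the number of nontrivial factors produces every symmetric monomial $(e_{c_1}\ot a_1)\cdots(e_{c_l}\ot a_l)$, so $M$ is cyclic on $v$ and its quotient is cyclic on $\bar v$. By the universal property of the Weyl module we obtain a surjection $\Phi\colon W_{\eps}^{\mcA}(l\om_1)\onto \mathsf{SW}_l^n(\mathsf{DH}_l(\mcA))$.

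It remains to prove $\Phi$ is injective; equivalently, that $M$ is the global Weyl module and $W_{\eps}^{\mcA}(l\om_1)$ is exactly its specialization $M\ot_Z\C_{\eps}$. Since $\Phi$ already gives $\dim W_{\eps}^{\mcA}(l\om_1)\ge\dim\mathsf{SW}_l^n(\mathsf{DH}_l(\mcA))$, it suffices to prove the reverse inequality. By the PBW theorem $W_{\eps}^{\mcA}(l\om_1)$ is spanned by $U(\mfn^-\ot\mcA)v_{\la}$, and the size of this space is controlled, one positive root at a time, by the $\mfsl_2$-triples $f=E_{ij}$, $e=E_{ji}$, $h=E_{ii}-E_{jj}$. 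The symmetrization relations established in Lemmas \ref{sym} and \ref{mul} — namely $(f\ot P^{\la(h)})v_{\la}=0$ whenever $\eps(P)=0$, together with $\big(f\ot(PQ+QP)\big)v_{\la}=0$ once $(f\ot P)v_{\la}=0$ — force the action of $\mfn^-\ot\mcA$ on $v_{\la}$ to factor through the $S_l$-coinvariant quotient defining $\mathsf{DH}_l(\mcA)$, matching precisely the monomial spanning set of $M\ot_Z\C_{\eps}$. This yields $\dim W_{\eps}^{\mcA}(l\om_1)\le\dim\mathsf{SW}_l^n(\mathsf{DH}_l(\mcA))$, completing the isomorphism.

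The main obstacle is this last step: producing \emph{enough} relations in the local Weyl module to collapse $U(\mfn^-\ot\mcA)v_{\la}$ down to the a priori smaller coinvariant dimension. The symmetrization identities of Lemmas \ref{sym}--\ref{mul} are exactly the mechanism that achieves this collapse onto the Schur--Weyl image of the diagonal coinvariants; the real work, for which I would follow \cite{FeLo,Lo}, lies in carrying this bookkeeping out uniformly across all simple roots rather than for a single $\mfsl_2$.
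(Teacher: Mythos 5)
Your first two steps are sound and reproduce the easy half of the Feigin--Loktev argument: with $Z=\Sym^l(\mcA)$ one has $\Sym^l(\C^n\ot\mcA)\ot_Z\C_{\eps}\cong \mathsf{SW}_l^n(\mathsf{DH}_l(\mcA))$, this module is finite dimensional (Noether), cyclic on the image of $(e_1\ot 1)^{\ot l}$, and that vector has the correct highest-weight property for $l\om_1$ and $\eps$, so the universal property of the Weyl module yields a surjection $W_{\eps}^{\mcA}(l\om_1)\onto\mathsf{SW}_l^n(\mathsf{DH}_l(\mcA))$. The gap is in the reverse inequality, which is the actual content of the theorem: one must show the local Weyl module is no larger, equivalently that the global Weyl module of weight $l\om_1$ is exactly $\Sym^l(\C^n\ot\mcA)$ and that $W_{\eps}^{\mcA}(l\om_1)$ is its fibre at $\eps$. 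Your appeal to Lemmas \ref{sym} and \ref{mul} does not deliver this. Those lemmas produce only ``depth one'' relations --- identities of the form $(f\ot P^{\la(h)})v_{\la}=0$ for $\eps(P)=0$, and their symmetrized multiples, for a single root triple applied directly to the highest weight vector --- and they were designed for the smash-product comparison of Theorem \ref{isoWeyl}, where the smaller target module is already known. They say nothing about length-$k$ monomials $(f_{i_1}\ot a_1)\cdots(f_{i_k}\ot a_k)v_{\la}$ with $k>1$, which is exactly where the size of $U(\mfn^-\ot\mcA)v_{\la}$ must be controlled, and there is no mechanism by which the action would ``factor through the $S_l$-coinvariant quotient'': $\mathsf{DH}_l(\mcA)$ is a quotient of $\mcA^{\ot l}$, not of anything on which single root vectors act one at a time. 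You acknowledge this by deferring ``the real work'' to \cite{FeLo,Lo}; but that work is precisely the statement being proved, so at its crux the argument is a citation rather than a proof.

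For comparison: the paper itself does not prove this theorem either --- it quotes it from \cite{FeLo} --- and in the analogous Proposition \ref{SWdhc} it only sketches the same strategy you follow, taking from \cite{FeLo} the identification of the global Weyl module of weight $l\om_1$ with $\Sym^l(\C^n\ot\mcA)$ and then specializing at the augmentation. That identification (equivalently the upper bound $\dim W_{\eps}^{\mcA}(l\om_1)\le\dim\mathsf{SW}_l^n(\mathsf{DH}_l(\mcA))$) is exactly the missing ingredient in your write-up; a self-contained proof would have to establish it, for instance by the Chari--Pressley/Feigin--Loktev analysis of the highest weight algebra acting on the global Weyl module, and neither your proposal nor the present paper supplies a substitute for that step.
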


When $\mcA = \C[u,v]$, the dimension of the ring of diagonal harmonics $ \mathsf{DH}_l(\mcA) $ is a difficult result proved by M. Haiman in \cite{Ha2}. Until after proposition \ref{SWdhc}, we will assume that $\Ga$ is an arbitrary finite subgroup of $SL_2(\C)$.  The preceding theorem can be applied to $\mcA = \C[u,v]^{G} $ and $\eps: \C[u,v]^{G} \lra \C$ the homomorphism given by the maximal ideal $\C[u,v]^{G}_{+}$ corresponding to the singularity. (Here, $G$ is an arbitrary finite subgroup of $SL_2(\C)$.)  It gives a nice description of the Weyl module for multiples of $\omega_1$, but, to compute its dimension, we would have to know more about the structure of $\mathsf{DH}_l( \C[u,v]^{G} )$ as a module for $S_l$: as far as we know, this is still an open problem when $G \neq \{ 1 \}$.  We are, however, able to obtain a partial result by 
considering the ring $\mcA = \C[u,v]$ but with highest weight conditions on $\mfh \otimes \C[u,v]^{\Ga}$, and from it we can deduce a lower bound when $\mcA=\C[u,v]^{\Ga}$.

By $\dhc{G}{l}$, denote the quotient of 
$\C[u_1, \dots, u_l, v_1, \dots, v_l]$ by the ideal generated by
$S_l\ltimes G^{\times l}$-invariants with zero at the origin. 
This is a module for $S_l \ltimes G^{\times l}$,
and, moreover, for $(S_l \ltimes G^{\times l})\ltimes \C [u,v]^{\otimes l}$.

\begin{definition}\label{defWeyl}
Let $\mu \in \mfh^*$ be a dominant integral weight and let $\nwmod{G}(\mu)$ be the maximal finite dimensional module over
$\mfsl_n(\C[u,v])$, generated by a vector $v_{\mu}$ such that
$$(\mfn^+ \otimes \C[u,v]) v_{\mu} = 0, \qquad (h \otimes P) v_{\mu} = \mu(h) P(0,0) v_{\mu}  \ \  \mbox{for}\ 
h \in \mfh, \  P \in \C [u,v]^G.$$  
We say that $\nwmod{G}(\mu)$ is the Weyl module for $\mfsl_n(\C[u,v])$ associated to the ideal $(\C [u,v]_+^G)$.
\end{definition}
\begin{remark}
The existence of a maximal finite dimensional module with this property can be proved as in \cite{FeLo}.  
\end{remark}

\begin{proposition}\label{SWdhc}
The Weyl module $\nwmod{G}(l\omega_1)$ is Schur-Weyl dual to $ \dhc{G}{l} $, i.e. $\nwmod{G}(l\omega_1) = \mathsf{SW}_l^n(\dhc{G}{l} )$. 
\end{proposition}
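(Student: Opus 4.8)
The plan is to adapt the proof of the Feigin--Loktev isomorphism $W_\eps^{\mcA}(l\omega_1)\cong\mathsf{SW}_l^n(\mathsf{DH}_l(\mcA))$ recalled above; the only genuinely new point is that relaxing the Cartan condition from $\mfh\ot\C[u,v]$ to $\mfh\ot\C[u,v]^{G}$ corresponds, under the Schur--Weyl functor, to dividing $\C[u,v]^{\ot l}$ by the $(S_l\ltimes G^{\times l})$-invariants rather than by the $S_l$-invariants. I would begin with two structural remarks. First, $\mathsf{SW}_l^n(-)=\big((\C^n)^{\ot l}\ot-\big)^{S_l}$ is an exact functor (in characteristic zero, taking $S_l$-invariants is exact), and there is an isomorphism of $\mfsl_n(\C[u,v])$-modules
\[ \mathsf{SW}_l^n(\C[u,v]^{\ot l})\cong\Sym^l(\C^n\ot\C[u,v]) \]
carrying the class of $e_1^{\ot l}\ot 1$ to $(e_1\ot 1)^l$; by \cite{FeLo} the right-hand side is the global Weyl module of highest weight $l\omega_1$, cyclic on $(e_1\ot 1)^l$. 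Second, letting $I_G\subset\C[u,v]^{\ot l}$ be the ideal generated by the $(S_l\ltimes G^{\times l})$-invariants vanishing at the origin (an $S_l$-stable ideal), exactness applied to $0\to I_G\to\C[u,v]^{\ot l}\to\dhc{G}{l}\to 0$ yields
\[ \mathsf{SW}_l^n(\dhc{G}{l})\cong\Sym^l(\C^n\ot\C[u,v])\big/\mathsf{SW}_l^n(I_G). \]

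Because $\nwmod{G}(l\omega_1)$ satisfies the relations $\mfn^+(\C[u,v])v_\mu=0$, $(h\ot 1)v_\mu=\mu(h)v_\mu$ (take $P=1$ in Definition \ref{defWeyl}) and $(E_{i+1,i}\ot 1)^{\mu(h_i)+1}v_\mu=0$ (the $\mfsl_2$-relation noted after Definition \ref{lWeylmod}), it is a quotient $\Sym^l(\C^n\ot\C[u,v])/K$ of the global Weyl module. I would then check that $\mathsf{SW}_l^n(\dhc{G}{l})$ is itself such a quotient, which forces $K\subseteq\mathsf{SW}_l^n(I_G)$. Put $v_\mu=e_1^{\ot l}\ot\ol 1\in\mathsf{SW}_l^n(\dhc{G}{l})$; it has weight $l\omega_1$ and is annihilated by $\mfn^+\ot\C[u,v]$ since $\mfn^+e_1=0$. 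The essential computation is
\[ (h\ot P)\,v_\mu=\omega_1(h)\;e_1^{\ot l}\ot\Big(\,\ol{\textstyle\sum_{i=1}^{l}P(u_i,v_i)}\,\Big)\qquad(h\in\mfh,\ P\in\C[u,v]^{G}), \]
and here the $G$-invariance of $P$ is exactly what makes $\sum_i P(u_i,v_i)$ an $(S_l\ltimes G^{\times l})$-invariant, so it is congruent modulo the defining ideal of $\dhc{G}{l}$ to its value $l\,P(0,0)$ at the origin; thus $(h\ot P)v_\mu=\mu(h)\,P(0,0)\,v_\mu$, the remaining relation of Definition \ref{defWeyl}. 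As $\dhc{G}{l}$ is the coinvariant algebra of the finite group $S_l\ltimes G^{\times l}$ it is finite dimensional, and $\mathsf{SW}_l^n(\dhc{G}{l})$ is cyclic on $v_\mu$ (being a quotient of $\Sym^l(\C^n\ot\C[u,v])$); by maximality in Definition \ref{defWeyl} there is a surjection $\nwmod{G}(l\omega_1)\onto\mathsf{SW}_l^n(\dhc{G}{l})$ fixing $v_\mu$, giving $K\subseteq\mathsf{SW}_l^n(I_G)$.

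It remains to prove the opposite inclusion $\mathsf{SW}_l^n(I_G)\subseteq K$, i.e. that $\mathsf{SW}_l^n(I_G)$ is killed in $\nwmod{G}(l\omega_1)$; this forces $K=\mathsf{SW}_l^n(I_G)$ and hence $\nwmod{G}(l\omega_1)\cong\mathsf{SW}_l^n(\dhc{G}{l})$. The Cartan relations of Definition \ref{defWeyl} give, for $P\in\C[u,v]^{G}$, the identity in $\Sym^l(\C^n\ot\C[u,v])$
\[ w_P:=\big(h\ot P-\mu(h)P(0,0)\big)v_\mu=\mu(h)\,\big(e_1\ot(P-P(0,0))\big)(e_1\ot 1)^{l-1}, \]
and $w_P$ lies in $\mathsf{SW}_l^n(I_G)$, being the image of $e_1^{\ot l}\ot\sum_i\big(P(u_i,v_i)-P(0,0)\big)$, an invariant vanishing at the origin; as the image of $w_P$ in $\nwmod{G}(l\omega_1)$ vanishes by Definition \ref{defWeyl}, we have $w_P\in K$. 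The point is then that $\mathsf{SW}_l^n(I_G)$ is generated as an $\mfsl_n(\C[u,v])$-submodule by the orbit of the vectors $w_P$: since $I_G$ is generated as an ideal by the power sums $\sum_i P(u_i,v_i)$ with $P\in(\C[u,v]^{G})_+$, one recovers an arbitrary element of $\mathsf{SW}_l^n(I_G)$ from the $w_P$ by acting with lowering operators $E_{b1}\ot R$ (which insert an arbitrary factor $R$ in one tensor slot and lower $e_1$ to $e_b$) and symmetrising.

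The hard part will be this last spanning statement, which is the adaptation of the corresponding step of \cite{FeLo}. It rests on a PBW-type reduction, available because $l\omega_1$ is a multiple of the minuscule weight $\omega_1$ so that every weight space of $\nwmod{G}(l\omega_1)$ is reached from $v_\mu$ by lowering operators acting in a single column, together with the invariant theory of the wreath product $S_l\ltimes G^{\times l}$. Concretely, one must verify that the invariant ring $(\C[u,v]^{\ot l})^{S_l\ltimes G^{\times l}}=\Sym^l(\C[u,v]^{G})$ has its augmentation ideal generated by the $G$-invariant power sums $\sum_i P(u_i,v_i)$; in characteristic zero this follows from Newton's identities applied to $R=\C[u,v]^{G}$, just as in \cite{FeLo} but with the polynomial ring $\C[u,v]$ replaced by the non-regular invariant ring $\C[u,v]^{G}$, and it is precisely this replacement that produces $\dhc{G}{l}$ in place of $\mathsf{DH}_l(\C[u,v])$.
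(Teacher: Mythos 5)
Your argument is correct and takes essentially the same route as the paper's proof (which itself defers to Feigin--Loktev): realize the global Weyl module as $\Sym^l(\C^n \ot \C[u,v])$, identify the submodule generated by the relations $(h \ot \C[u,v]^G_+)v_{l\omega_1}$ with $\mathsf{SW}_l^n$ of the ideal generated by the $S_l \ltimes G^{\times l}$-invariants vanishing at the origin (using that, in characteristic zero, this invariant ring is generated by the power sums $\sum_i P(u_i,v_i)$, $P \in \C[u,v]^G$), and conclude that $\nwmod{G}(l\omega_1) \cong \mathsf{SW}_l^n(\dhc{G}{l})$. Your two-inclusion packaging and explicit flagging of the multisymmetric power-sum lemma are simply a more detailed write-up of the same sketch.
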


\begin{proof}
The argument is the same as the one used in \cite{FeLo}, so we just sketch it for the benefit of the reader. The Weyl module $\nwmod{G}(l\omega_1)$ is the quotient of the global Weyl module of $\mfsl_n(\C[u,v])$ for the weigh $l\omega_1$ by the submodule generated by $\left(h \otimes \C [u,v]^G_+\right) v_{l\omega_1}$.  The global Weyl module for this weight is $\mathrm{Sym}^l(\C^n \otimes \C [u,v])$ and we have to quotient by the submodule generated by the action of $\mathrm{Sym}^l(\C [u,v]^G_+)$. Thus the Weyl module $\nwmod{G}(l\omega_1)$ is obtained by applying the Schur-Weyl construction to the quotient of $\C[u_1, \dots, u_l, v_1, \dots, v_l]$ by the ideal generated by the $S_l\ltimes G^{\times l}$-invariant polynomials. 
\end{proof}

The following theorem of R. Vale is a generalization of a theorem of I. Gordon for $S_l$ \cite{Go1}.

\begin{theorem}\cite{Va}\label{quotdc}
The representation $\dhc{\Ga}{l}$ has a quotient $\dhc{\Ga}{l}^0$ such that the 
trace on $\dhc{\Ga}{l}^0\otimes {\rm Sign}$ of a permutation $\sigma \in S_l$ consisting of  $s$ cycles is equal to
$(dl+1)^s$.
\end{theorem}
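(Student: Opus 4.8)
I would work inside the representation theory of the rational Cherednik algebra of the complex reflection group $W=\Ga\wr S_l=G(d,1,l)$, acting on its reflection representation $V=\C^l=\mathrm{Span}(v_1,\dots,v_l)$ with dual $V^*=\mathrm{Span}(u_1,\dots,u_l)$. The key preliminary observation is that $\dhc{\Ga}{l}$ is exactly the diagonal coinvariant ring $\C[V\oplus V^*]^{co\,W}=\C[u_1,\dots,u_l,v_1,\dots,v_l]/(\C[u,v]^W_+)$ of $W$, since $S_l\ltimes\Ga^{\times l}=W$. The plan, following Gordon's treatment of Weyl groups and its adaptation to $G(d,1,l)$, is to realize $\dhc{\Ga}{l}^0$ as a quotient coming from a finite-dimensional module over the rational Cherednik algebra $H_c=H_c(W)$. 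I would fix the parameter $c$ at the distinguished value (the $G(d,1,l)$-analogue of the Berest--Etingof--Ginzburg value, governed by the largest degree $h=dl$) for which the standard module $M_c(\mathrm{triv})=\C[V]$ admits a finite-dimensional irreducible quotient $L_c(\mathrm{triv})$, of dimension $(dl+1)^l$ and with trivial central character.

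The next step is to produce the surjection $\dhc{\Ga}{l}\twoheadrightarrow\dhc{\Ga}{l}^0$. Equip $H_c$ with the Bernstein filtration, in which $V$ and $V^*$ both have degree one, so that $\gr\,H_c\cong\C[V\oplus V^*]\rtimes W$. Following Gordon, a good filtration on the finite-dimensional module $L_c(\mathrm{triv})$ makes $\gr\,L_c(\mathrm{triv})$ a cyclic $\C[V\oplus V^*]\rtimes W$-module on which the positive-degree $W$-invariants act trivially; this is the technical core of the construction and is where the special value of $c$ enters. One thereby obtains $\dhc{\Ga}{l}^0$ as a $W$-equivariant quotient of the diagonal coinvariant ring $\dhc{\Ga}{l}$ and, tracking the $\det$-twist built into Gordon's argument, an isomorphism of $W$-modules $\dhc{\Ga}{l}^0\cong L_c(\mathrm{triv})\ot\det$. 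Its dimension is $(dl+1)^l$, the value of the asserted formula at the identity.

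With this in hand the trace formula reduces to a character computation for $L_c(\mathrm{triv})$. Since the determinant character of $G(d,1,l)$ restricts to the sign character on $S_l$, for $\sigma\in S_l$ one has $\det(\sigma)\,{\rm Sign}(\sigma)=1$, so the trace of $\sigma$ on $\dhc{\Ga}{l}^0\ot{\rm Sign}$ equals the trace of $\sigma$ on $L_c(\mathrm{triv})$. It therefore suffices to show that, as $W$-modules, $L_c(\mathrm{triv})$ has the same character as the permutation module $\C[(\Z/(dl+1)\Z)^{\,l}]$, where $W=G(d,1,l)$ acts through the reduction modulo $dl+1$ of its reflection representation. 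Granting this, the trace of $\sigma\in S_l$ is the number of points of $(\Z/(dl+1)\Z)^l$ fixed by the coordinate permutation $\sigma$; since such a point must be constant along each cycle of $\sigma$, this count is exactly $(dl+1)^{s}$, with $s$ the number of cycles. In particular the answer depends on $\sigma$ only through $\dim V^\sigma=s$.

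The hard part is precisely this character identity, and it is where the representation theory of $H_c(G(d,1,l))$ must be used in earnest: one needs the graded multiplicities of $L_c(\mathrm{triv})$ in category $\mcO$, the nondegeneracy of the contravariant form on $L_c(\mathrm{triv})$ (so that it is a graded Frobenius module of the expected dimension), and the identification of the resulting graded $W$-character with that of the finite permutation module above. I expect this to be the main obstacle; by contrast, once the permutation-module description is established, the cycle-by-cycle factorization of the fixed-point count producing $(dl+1)^s$ is elementary. This is exactly the structure of Gordon's argument for Weyl groups, with $dl$ playing the role of the Coxeter number and $V=\C^l$ that of the reflection representation, which is genuinely $l$-dimensional here because $\Ga$ is nontrivial.
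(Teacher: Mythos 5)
The paper offers no proof of this statement: it is quoted from Vale \cite{Va} (which generalizes Gordon \cite{Go1}), and your sketch follows exactly that Gordon--Vale strategy — the finite-dimensional module $L_c(\mathrm{triv})$ of dimension $(dl+1)^l$ over the rational Cherednik algebra of $G(d,1,l)$ at the distinguished parameter, a good filtration making $\gr L_c(\mathrm{triv})\otimes\det$ a quotient of the diagonal coinvariant ring $\dhc{\Ga}{l}$, and the identification of its $W$-character with the permutation module on $(\Z/(dl+1)\Z)^{l}$, from which the fixed-point count $(dl+1)^{s}$ for $\sigma\in S_l$ with $s$ cycles (and the cancellation $\det(\sigma)\,\mathrm{Sign}(\sigma)=1$) is immediate. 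So your outline is the intended argument and contains no wrong turns, but the steps you defer — existence and dimension of $L_c(\mathrm{triv})$, the filtration argument killing the positive-degree diagonal invariants, and the character identity — are precisely the substance of the cited papers, so on its own it remains a sketch rather than a proof.
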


Now let us apply Theorem~\ref{quotdc} to the character calculation for  $\nwmod{\Ga}(l\omega_1)$.
Let $F(l,k)$ be the set of functions from $\{1,\dots, l \}$ to
$\{1, \dots, k\}$. This set admits an action of $S_l$
by permutation of the arguments. Denote by 
$\C F(l,k)$ the corresponding complex representation of 
$S_l$.

\begin{lemma}\label{charFlk}
Suppose that $\sigma \in S_l$ is a product of $s$ cycles.
Then the trace of $\sigma$ on $\C F(l,k)$ is equal to $k^{s}$.
\end{lemma}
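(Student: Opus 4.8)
The plan is to exploit the fact that $\C F(l,k)$ is, by construction, a \emph{permutation representation}: $S_l$ acts on the finite set $F(l,k)$ and $\C F(l,k)$ is the corresponding linearization, with basis indexed by the functions $\phi \in F(l,k)$. For any permutation representation, the matrix of a group element in this basis is a permutation matrix, so its trace equals the number of basis elements it fixes. Thus the first step is simply to reduce the computation of $\mathrm{Tr}(\sigma)$ on $\C F(l,k)$ to counting the functions $\phi \in F(l,k)$ that are fixed by $\sigma$.

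Next I would make the action explicit. With the convention that $S_l$ acts by permuting arguments, we have $(\sigma\cdot\phi)(i) = \phi(\sigma^{-1}(i))$ for $\phi \in F(l,k)$ and $i \in \{1,\dots,l\}$. Hence $\sigma\cdot\phi = \phi$ if and only if $\phi(\sigma^{-1}(i)) = \phi(i)$ for all $i$, which says precisely that $\phi$ is constant along the $\sigma$-orbits of $\{1,\dots,l\}$, i.e.\ constant on each cycle of $\sigma$.

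Finally I would count these fixed functions. Since $\sigma$ is a product of $s$ cycles, its orbits on $\{1,\dots,l\}$ are exactly these $s$ cycles. A function constant on each orbit is determined freely and independently by its value in $\{1,\dots,k\}$ on each of the $s$ orbits, and conversely any such choice yields a fixed function. Therefore the number of fixed functions is $k^s$, which gives $\mathrm{Tr}(\sigma) = k^s$ and proves the lemma.

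There is really no substantial obstacle here; the only point requiring a little care is fixing the action convention consistently (so that ``fixed by $\sigma$'' correctly translates into ``constant on the cycles of $\sigma$''), but either convention leads to the same orbit condition and hence the same count $k^s$.
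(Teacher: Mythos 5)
Your proof is correct and follows essentially the same argument as the paper: the trace of $\sigma$ on the permutation module $\C F(l,k)$ equals the number of functions fixed by $\sigma$, and these are exactly the functions constant on each of the $s$ cycles, giving $k^s$. The extra care you take with the action convention is fine but not needed beyond what the paper already states.
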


\begin{proof}
The trace of $\sigma$ is equal to the number of functions stable under the action of $\sigma$. A function is stable if it has the same value on all the elements of each cycle, so it is determined by $s$ elements of   $\{1, \dots, k\}$. 
\end{proof}

\begin{lemma}\label{SWFlk}
The $\mfsl_n$-module $\mathsf{SW}_l^n( \C F(l,k)\otimes {\rm Sign} )$ is isomorphic to $\bigwedge^l\big( (\C^n)^{\oplus k}\big)$.
\end{lemma}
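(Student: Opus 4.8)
The plan is to reduce the statement to the standard identification of the sign-isotypic component of a tensor power with an exterior power. First I would rewrite the permutation module $\C F(l,k)$ as an $l$-fold tensor power. Writing $e_1,\dots,e_k$ for the standard basis of $\C^k$, the map sending the basis vector $e_f$ (for $f\in F(l,k)$) to $e_{f(1)} \ot \cdots \ot e_{f(l)}$ is an isomorphism of $S_l$-modules $\C F(l,k) \iso (\C^k)^{\ot l}$, where $S_l$ permutes the tensor factors. Indeed, the action on $\C F(l,k)$ by permutation of arguments sends $e_f$ with $(\sigma f)(i)=f(\sigma^{-1}(i))$ to $\bigotimes_i e_{f(\sigma^{-1}(i))}$, which is exactly the permutation of the $l$ factors of $(\C^k)^{\ot l}$.

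Next I would combine this with the diagonal action on $(\C^n)^{\ot l}$. Setting $V = \C^n \ot \C^k \cong (\C^n)^{\oplus k}$, there is a canonical isomorphism $(\C^n)^{\ot l} \ot (\C^k)^{\ot l} \iso V^{\ot l}$ obtained by regrouping the $i$-th factor of $(\C^n)^{\ot l}$ with the $i$-th factor of $(\C^k)^{\ot l}$. Under this isomorphism the simultaneous $S_l$-action on both sets of factors becomes the permutation action on the $l$ factors of $V^{\ot l}$, while the $\mfsl_n$-action (diagonal on $(\C^n)^{\ot l}$, trivial on $\C F(l,k)$) becomes the diagonal action of $\mfsl_n$ on $V^{\ot l}$ through the $\C^n$-factor of $V$, i.e.\ through $\mfsl_n \into \mfgl_n \ot 1 \subset \mfgl(V)$. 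Hence
\[
\mathsf{SW}_l^n(\C F(l,k) \ot \mathrm{Sign}) = \big( (\C^n)^{\ot l} \ot \C F(l,k) \ot \mathrm{Sign} \big)^{S_l} \cong \big( V^{\ot l} \ot \mathrm{Sign} \big)^{S_l}
\]
as $\mfsl_n$-modules.

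Finally I would invoke the standard fact that taking $S_l$-invariants of $V^{\ot l} \ot \mathrm{Sign}$ selects the antisymmetric tensors: an element is fixed precisely when $\sigma$ acts on the $V^{\ot l}$-part by $\mathrm{sgn}(\sigma)$, so the invariant subspace is the image of the antisymmetrization idempotent, namely $\bigwedge^l V$. This identification is manifestly $\mfgl(V)$-equivariant, hence $\mfsl_n$-equivariant for the action described above, yielding $\big( V^{\ot l} \ot \mathrm{Sign} \big)^{S_l} \cong \bigwedge^l V = \bigwedge^l\big( (\C^n)^{\oplus k} \big)$ as $\mfsl_n$-modules, which is the claim.

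The only genuine bookkeeping — and the step requiring care — is tracking the two commuting structures at once: one must verify that the $S_l$-action used in the Schur--Weyl functor (permuting the $\C^n$-factors while permuting the arguments of functions in $F(l,k)$) coincides, after regrouping, with the single permutation action on $V^{\ot l}$, and that the $\mfsl_n$-action is carried intact onto the $\C^n$-side of $V$. Once the actions are matched up, the conclusion follows immediately from the exterior-power description of the sign-isotypic subspace; there is no substantive analytic or combinatorial obstacle.
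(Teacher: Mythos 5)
Your proof is correct and follows essentially the same route as the paper: both identify $(\C^n)^{\ot l}\ot \C F(l,k)$ with $\big((\C^n)^{\oplus k}\big)^{\ot l}$ as an $SL_n\times S_l$-module (the paper's map $(v_1\ot\cdots\ot v_l)\ot f\mapsto v_1^{(f(1))}\ot\cdots\ot v_l^{(f(l))}$ is exactly your regrouping through $V=\C^n\ot\C^k$) and then extract the Sign-isotypic part to obtain $\bigwedge^l\big((\C^n)^{\oplus k}\big)$. Your write-up simply makes the intermediate isomorphism $\C F(l,k)\cong(\C^k)^{\ot l}$ and the antisymmetrization step explicit.
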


\begin{proof}
Note that $(\C^n)^{\otimes l} \otimes \C F(l,k)$ is isomorphic to $\big( (\C^n)^{\oplus k} \big)^{\otimes l}$ 
as $SL_n \times S_l$-module. The isomorphism can be constructed as the map sending $(v_1\otimes \dots \otimes v_l) \otimes f$ to $v_1^{(f(1))} \otimes \dots \otimes v_l^{(f(l))}$, where $v^{(i)}$ belongs to the $i$-th summand of $(\C^n)^{\oplus k}$.
Then the lemma follows by restricting  this isomorphism to the ${\rm Sign}$ component.
\end{proof}

\begin{theorem}\label{Wmodquot}
The Weyl module $\nwmod{\Ga}(l\omega_1)$ has a quotient which, as a representation of $\mfsl_n$, is isomorphic to $\bigwedge^l\big( (\C^n)^{\oplus (dl+1)}\big)$.
\end{theorem}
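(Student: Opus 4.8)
The plan is to assemble the ingredients already in place---Proposition~\ref{SWdhc}, Theorem~\ref{quotdc}, and Lemmas~\ref{charFlk} and~\ref{SWFlk}---by pushing the surjection supplied by Vale's theorem through the Schur--Weyl functor $\mathsf{SW}_l^n(-) = \big((\C^n)^{\ot l}\ot -\big)^{S_l}$ and then identifying the resulting coefficient $S_l$-module by a character computation.

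First I would isolate the one formal property of $\mathsf{SW}_l^n$ that the argument rests on. Since we work over $\C$ and $\C[S_l]$ is semisimple, the functor of $S_l$-invariants is exact; tensoring by the fixed space $(\C^n)^{\ot l}$ is also exact; hence $\mathsf{SW}_l^n$ sends surjections of $S_l$-modules to surjections. Moreover the $\mfsl_n$-action on $\mathsf{SW}_l^n(E)$ comes entirely from the diagonal action on $(\C^n)^{\ot l}$, which commutes with $S_l$ and does not involve the coefficient module $E$; so any $S_l$-equivariant map $E\to E'$ induces an $\mfsl_n$-equivariant map $\mathsf{SW}_l^n(E)\to\mathsf{SW}_l^n(E')$. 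This is exactly why a quotient on the $S_l$-side will descend to an $\mfsl_n$-module quotient on the Schur--Weyl side.

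Next I would transport the surjection. Theorem~\ref{quotdc} provides an $S_l$-equivariant surjection $\dhc{\Ga}{l}\onto \dhc{\Ga}{l}^0$. Applying $\mathsf{SW}_l^n$ and invoking Proposition~\ref{SWdhc}, which identifies $\nwmod{\Ga}(l\omega_1)$ with $\mathsf{SW}_l^n(\dhc{\Ga}{l})$, yields a surjection of $\mfsl_n$-modules
\[
\nwmod{\Ga}(l\omega_1) \;=\; \mathsf{SW}_l^n(\dhc{\Ga}{l}) \;\onto\; \mathsf{SW}_l^n(\dhc{\Ga}{l}^0).
\]
Thus the right-hand side is the desired quotient, and it only remains to identify it as an $\mfsl_n$-module.

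Finally I would pin down $\dhc{\Ga}{l}^0$ as an $S_l$-module by comparing characters. By Theorem~\ref{quotdc}, the trace of a $\sigma\in S_l$ with $s$ cycles on $\dhc{\Ga}{l}^0\ot\mathrm{Sign}$ is $(dl+1)^s$, while Lemma~\ref{charFlk} with $k=dl+1$ gives the same value $(dl+1)^s$ for the trace of $\sigma$ on $\C F(l,dl+1)$. These traces agree on every conjugacy class of $S_l$, so, since characters determine representations in characteristic zero, $\dhc{\Ga}{l}^0\ot\mathrm{Sign}\cong\C F(l,dl+1)$, whence $\dhc{\Ga}{l}^0\cong\C F(l,dl+1)\ot\mathrm{Sign}$. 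Applying $\mathsf{SW}_l^n$ and Lemma~\ref{SWFlk} gives
\[
\mathsf{SW}_l^n(\dhc{\Ga}{l}^0)\;\cong\;\mathsf{SW}_l^n\!\big(\C F(l,dl+1)\ot\mathrm{Sign}\big)\;\cong\;\bigwedge\nolimits^l\big((\C^n)^{\oplus(dl+1)}\big),
\]
the claimed quotient. The only genuinely delicate point is the surjection-preservation (exactness) of $\mathsf{SW}_l^n$, which guarantees that Vale's quotient descends to an honest $\mfsl_n$-module quotient of the Weyl module; once that is granted, the rest is bookkeeping with characters.
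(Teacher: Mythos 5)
Your proposal is correct and follows essentially the same route as the paper's own proof: pass the surjection $\dhc{\Ga}{l}\onto\dhc{\Ga}{l}^0$ of Theorem~\ref{quotdc} through the Schur--Weyl functor using Proposition~\ref{SWdhc}, identify $\dhc{\Ga}{l}^0$ as $\C F(l,dl+1)\ot\mathrm{Sign}$ by the character comparison of Lemma~\ref{charFlk}, and conclude with Lemma~\ref{SWFlk}. The only difference is that you spell out the exactness of $\mathsf{SW}_l^n$ and the functoriality giving the $\mfsl_n$-equivariant surjection, which the paper leaves implicit.
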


\begin{proof}
Since $\dhc{\Ga}{l}^0$ is a quotient of $\dhc{\Ga}{l}$, we have that $\mathsf{SW}_l^n(\dhc{\Ga}{l}^0)$ is a quotient of $\nwmod{\Ga}(l\omega_1)$ by proposition \ref{SWdhc}. Lemma \ref{charFlk} and theorem \ref{quotdc} imply that $\dhc{\Ga}{l}^0$ is isomorphic to $\C F(l,k)\otimes {\rm Sign}$ with $k=dl+1$. By lemma \ref{SWFlk}, $\mathsf{SW}_l^n(\dhc{\Ga}{l}^0)$ is thus equal to $\bigwedge^l\big( (\C^n)^{\oplus (dl+1)}\big)$.
\end{proof}

\begin{corollary}\label{lbWmod1}
The dimension of $\nwmod{\Ga}(l\omega_1)$ is bounded below by $ \left( \begin{array}{c} n(dl+1) \\ l \end{array} \right) $.
\end{corollary}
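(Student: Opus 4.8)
The plan is to read off the bound directly from Theorem~\ref{Wmodquot}, so that the corollary is essentially a dimension count applied to the quotient constructed there. First I would invoke Theorem~\ref{Wmodquot}, which asserts that $\nwmod{\Ga}(l\omega_1)$ admits a surjection of $\mfsl_n$-modules onto $\bigwedge^l\big((\C^n)^{\oplus(dl+1)}\big)$. Since $\nwmod{\Ga}(l\omega_1)$ is by definition finite dimensional, its dimension is a well-defined non-negative integer, and a surjective linear map can never increase dimension; hence
\[
\dim \nwmod{\Ga}(l\omega_1) \;\ge\; \dim \bigwedge^l\big((\C^n)^{\oplus(dl+1)}\big).
\]

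It then remains to compute the dimension of the exterior power on the right. The vector space $(\C^n)^{\oplus(dl+1)}$ has dimension $n(dl+1)$, and the $l$-th exterior power of an $N$-dimensional space has dimension $\binom{N}{l}$. Applying this with $N = n(dl+1)$ gives
\[
\dim \bigwedge^l\big((\C^n)^{\oplus(dl+1)}\big) \;=\; \binom{n(dl+1)}{l},
\]
which is exactly the claimed lower bound.

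There is no genuine obstacle remaining at this stage: the substantive content has already been absorbed into Theorem~\ref{Wmodquot}, whose proof relied on Vale's trace formula (Theorem~\ref{quotdc}) together with the Schur--Weyl identifications of Lemmas~\ref{charFlk} and~\ref{SWFlk}. The only point that merits a word of care is that the inequality genuinely requires the quotient to be a quotient of \emph{vector spaces} (not merely an abstract subquotient), which is guaranteed since the surjection in Theorem~\ref{Wmodquot} is a morphism of $\mfsl_n$-modules and in particular $\C$-linear. I would therefore present the argument as a short two-line deduction, emphasizing that the bound is a direct consequence of the existence of the exterior-power quotient.
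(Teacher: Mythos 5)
Your proposal is correct and matches the paper's (implicit) argument exactly: the corollary is read off from Theorem~\ref{Wmodquot} by observing that a surjective image cannot exceed the source in dimension, together with the count $\dim\bigwedge^l\bigl((\C^n)^{\oplus(dl+1)}\bigr)=\binom{n(dl+1)}{l}$. Nothing further is needed.
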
 

By modifying slightly the argument in the previous paragraphs, we can give a lower bound also for some local Weyl modules when $\mcA=\C[u,v]^{\Ga}$. Let us introduce the following action of $\Ga^{\times l}$ on $ \C F(l,k)$: if we fix a generator $\xi_i$ of the $i^{th}$ copy of $\Ga$ in $\Ga^{\times l}$ and if $f\in \C F(l,k)$, then $\xi_i(f)=\zeta^{j-1} f$ if $f(i)=j$. This action can be combined with the one associated to $S_l$ to obtain an action of $\Ga^{\times l} \rtimes S_l$. The trace of $\xi_i$ on $\C F(l,k)$ is $\left(\sum_{j=0}^{k-1} \zeta^j \right)k^{l-1}$, which equals $\left(\sum_{j=0}^{\ol{k}} \zeta^j \right)k^{l-1}$ where $0\le \ol{k}\le d-1$ and $k-1\equiv\ol{k} \,\mathrm{mod}\,(d)$. In particular, if $k\equiv 1 \,\mathrm{mod}\,(d)$, then this trace is $k^{l-1}$.

Theorem \ref{quotdc} (see \cite{Va}) also states that the trace of $\xi_i$ on $\dhc{\Ga}{l}^0\otimes {\rm Sign}$ is equal to $(dl+1)^{l-1}$. That theorem actually applies to any element in $\Ga^{\times l} \rtimes S_l$, and from it we can deduce that we have an isomorphism of $\Ga^{\times l} \rtimes S_l$-modules between  $ \dhc{\Ga}{l}^0 $ and  $\C F(l,ld+1)\otimes {\rm Sign}$. ($\Ga^{\times l}$ acts trivially on ${\rm Sign}$.) Therefore,  $(\dhc{\Ga}{l}^0)^{\Ga^{\times l}} \cong \C F(l,l+1)\otimes {\rm Sign}$ since the functions in $\C F(l,ld+1)$ which are invariants under $\Ga^{\times l}$ can be identified with $\C F(l,l+1)$.

Now, we can repeat the argument we used above. We have that $\mathsf{SW}_l^n((\dhc{\Ga}{l}^0)^{\Ga^{\times l}})$ is a quotient of $\mathsf{SW}_l^n(\dhc{\Ga}{l}^{\Ga^{\times l}})$ isomorphic to $\bigwedge^l\big( (\C^n)^{\oplus (l+1)}\big)$. Let $\nwmods{\Ga}(\mu)$ be defined as $\nwmod{\Ga}(\mu)$ in definition \ref{defWeyl}, but with $\C[u,v]$ replaced by $\C[u,v]^{\Ga}$. The Weyl module $\nwmods{\Ga}(l\omega_1)$ is isomorphic to $\mathsf{SW}_l^n(\mathsf{DH}_l( \C[u,v]^{\Ga}))$.  Since $ \mathsf{DH}_l( \C[u,v]^{\Ga} ) \cong (\dhc{\Ga}{l})^{\Ga^{\times l}}$, we conclude that  $\nwmods{\Ga}(l\omega_1)$ has a quotient isomorphic to $\bigwedge^l\big( (\C^n)^{\oplus (l+1)}\big)$ as $\mfsl_n$-module, whence the following corollary.

\begin{corollary}\label{lbWmod2}
The dimension of $\nwmods{\Ga}(l\omega_1)$ is bounded below by $ \left( \begin{array}{c} n(l+1) \\ l \end{array} \right) $.
\end{corollary}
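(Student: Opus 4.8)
The plan is to read off the bound directly from the exterior-power quotient exhibited in the paragraph immediately preceding the corollary, exactly as corollary \ref{lbWmod1} is deduced from theorem \ref{Wmodquot}. First I would invoke the established isomorphism $\nwmods{\Ga}(l\omega_1) \cong \mathsf{SW}_l^n(\mathsf{DH}_l(\C[u,v]^{\Ga}))$ together with the identification $\mathsf{DH}_l(\C[u,v]^{\Ga}) \cong (\dhc{\Ga}{l})^{\Ga^{\times l}}$. Since $\dhc{\Ga}{l}^0$ is a quotient of $\dhc{\Ga}{l}$ compatibly with the $\Ga^{\times l}\rtimes S_l$-action, taking $\Ga^{\times l}$-invariants and applying the exact functor $\mathsf{SW}_l^n$ produces a surjection of $\mfsl_n$-modules from $\nwmods{\Ga}(l\omega_1)$ onto $\mathsf{SW}_l^n\big((\dhc{\Ga}{l}^0)^{\Ga^{\times l}}\big)$.

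Next I would identify this target explicitly via the character bookkeeping already in place. Vale's theorem \ref{quotdc}, read off on the whole group $\Ga^{\times l}\rtimes S_l$, gives an isomorphism of $\Ga^{\times l}\rtimes S_l$-modules $\dhc{\Ga}{l}^0 \cong \C F(l,dl+1)\otimes {\rm Sign}$, where ${\rm Sign}$ carries the trivial $\Ga^{\times l}$-action. Restricting to $\Ga^{\times l}$-invariants, a function $f\in\C F(l,dl+1)$ is invariant precisely when $f(i)\equiv 1 \bmod d$ for every $i$, and there are exactly $l+1$ admissible values in $\{1,\dots,dl+1\}$; hence $(\dhc{\Ga}{l}^0)^{\Ga^{\times l}} \cong \C F(l,l+1)\otimes {\rm Sign}$. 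Applying lemma \ref{SWFlk} with $k=l+1$ then identifies $\mathsf{SW}_l^n(\C F(l,l+1)\otimes {\rm Sign})$ with $\bigwedge^l\big((\C^n)^{\oplus(l+1)}\big)$.

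Finally, since the dimension of a module is at least the dimension of any of its quotients, it remains only to compute $\dim \bigwedge^l\big((\C^n)^{\oplus(l+1)}\big)$. The underlying space $(\C^n)^{\oplus(l+1)}$ has dimension $n(l+1)$, and the standard formula $\dim\bigwedge^l W = \binom{\dim W}{l}$ yields the stated lower bound $\binom{n(l+1)}{l}$.

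I do not expect any genuine obstacle in this final step: all the substantive input is contained in Vale's theorem \ref{quotdc} and in the Schur-Weyl lemmas \ref{charFlk} and \ref{SWFlk}, so the corollary is purely the numerical consequence of the quotient already constructed. The only point demanding a little care is the counting of $\Ga^{\times l}$-invariant functions, i.e. verifying that the invariants of $\C F(l,dl+1)$ are indexed by the $l+1$ residues congruent to $1$ modulo $d$, which is what reduces the rank $dl+1$ of corollary \ref{lbWmod1} to $l+1$ here.
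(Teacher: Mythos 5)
Your proposal is correct and follows essentially the same route as the paper: Vale's theorem applied to the full group $\Ga^{\times l}\rtimes S_l$ identifies $\dhc{\Ga}{l}^0$ with $\C F(l,dl+1)\otimes{\rm Sign}$, its $\Ga^{\times l}$-invariants with $\C F(l,l+1)\otimes{\rm Sign}$, and the Schur--Weyl functor then produces the quotient $\bigwedge^l\big((\C^n)^{\oplus(l+1)}\big)$ of $\nwmods{\Ga}(l\omega_1)$, whence the bound $\binom{n(l+1)}{l}$. Your counting of the $l+1$ residues congruent to $1$ modulo $d$ and the exactness remarks match the argument given just before the corollary.
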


There is no reason to expect that the lower bound in corollary \ref{lbWmod1} is the best possible. Indeed, we can show that, when $d=2,l=4$, it is too low, following ideas of I. Gordon. In this case, $\Ga^{\times l} \rtimes S_l$ is isomorphic to the Weyl group $W$ of type $B_4$. In \cite{Ha1}, M. Haiman explains that the ring of diagonal coinvariants in this case has dimension $9^4+1$, which is one more than the dimension of a certain quotient introduced in conjectures 7.1.2,7.1.3 and 7.2.3 in \textit{op. cit.} These conjectures were proved by I. Gordon in \cite{Go1} and we denoted above this quotient by $\dhc{\Ga}{l}^0$. 

This means that, in  $\dhc{\Z/2\Z}{4}$, there is a one-dimensional subspace $E$ which carries a non-trivial representation of $W$. There is an action of $\mfsl_2(\C)$ on $\dhc{\Z/2\Z}{4}$ commuting with the action of $S_4$ (this is actually true in $\C[u_1,v_1,\ldots,u_l,v_l]$ for any $l\in\Z_{\ge 1}$), so $E$ is also a representation of $\mfsl_2(\C)$ and must thus be trivial. The standard diagonal element $h \in\mfsl_2(\C)$ acts by $\sum_{i=1}^4 \left(u_i\frac{d}{du_i} - v_i\frac{d}{dv_i} \right)$, so this operators acts trivially on $E$, which implies that the monomials which appear in $E$ have their $u$-degree equal to their $v$-degree. 

The Weyl module $\nwmod{\Z/2\Z}(4\omega_1)$ is obtained by applying the Schur-Weyl functor to $\dhc{\Z/2\Z}{4}$, so, if $n\ge 4$,  $\nwmod{\Z/2\Z}(4\omega_1)$ has dimension greater than $ \left( \begin{array}{c} 9n \\ 4 \end{array} \right) $, which shows that the lower bound in corollary \ref{lbWmod1} is too low.

\section{Matrix Lie algebras over rational Cherednik algebras of rank one}\label{matdiff}

The polynomial ring $\C[u,v]$ can be deformed into the first Weyl algebra $A_1 =
\C\langle u,v \rangle / ( vu - uv - 1) $, which can be viewed as the ring $\mcD(\C)$ of
algebraic differential operators on the affine line $\mathbb{A}_{\C}^1$. Such differential
operators play an important role in the representation theory of Cherednik algebras and
the $G$-DDCA
of \cite{Gu1,Gu2,Gu3} are also deformations of the enveloping algebra of $\mfgl_n(A_1
\rtimes \Ga)$ when $G$ is a finite subgroup of $SL_2(\C)$.

More generally, the rational Cherednik algebra $\msH_{t,\mbc}(G_l)$ for the wreath
product $G_l = G^{\times l} \rtimes S_l$ admits two specializations of particular
interest: $\msH_{t=0,\mbc = \mathbf{0}}(G_l) \cong \C[x_1,y_1, \ldots, x_l,y_l] \rtimes
G_l $ and $\msH_{t=1,\mbc=\mathbf{0}}(G_l) \cong A_l \rtimes G_l$ where $A_l$ is
the $l^{th}$ Weyl algebra. The representation theories of these two algebras differ
greatly: for instance, in the first case,  $\msH_{t=0,\mbc = \mathbf{0}}(G_l)$ has
infinitely many irreducible finite dimensional representations, whereas
$\msH_{t=1,\mbc=\mathbf{0}}(G_l)$ has none. Actually, $\msH_{t=1,\mbc}(G_l)$ does not
have any finite dimensional representations for generic values of $\mbc$. The $\Ga$-DDCA
also admit two such specializations and it is reasonable to expect that their
representation theories will thus differ noticeably. In this article, we want to start
investigating the categories of modules for these two specializations, so, in this section
we will study matrix Lie algebras over rational Cherednik algebras of rank one with $t\neq 0$.

\begin{definition}\label{ratCher}
Let $\mbc = (c_1, \ldots, c_{d-1}) \in \C^{d-1}$. The rational Cherednik algebra
$\msH_{t,\mbc}(\Ga)$ of rank one is the algebra generated by elements $u,v,\ga$ with $\ga
\in \Ga = \Z / d\Z$ and the following relations: \begin{equation*}
\ga u \ga^{-1} = \zeta u, \;\; \ga v \ga^{-1} = \zeta^{-1} v
\end{equation*}
\begin{equation}
vu - uv = t + \sum_{i=1}^{d-1} c_i \xi^i \mbox{ where } \xi \mbox{ is a generator of }
\Ga. \label{rc1}
\end{equation}
\end{definition}

It will also be convenient to rewrite equation \eqref{rc1} in the form $vu - uv = t +
\sum_{i=0}^{d-1} \wt{c}_i (\mbe_i - \mbe_{i+1} ) $ for some $\wt{c}_i\in\C$. We will need
to use later the element $\omega$ which can be written in the following three equivalent
ways: \begin{equation*} \begin{split} \omega  =  & -uv + \sum_{i=0}^{d-1} \wt{c}_i
\mbe_{i+1} =  -vu + t + \sum_{i=0}^{d-1} \wt{c}_{i} \mbe_i     
  =  -\frac{uv+vu}{2} + \frac{t}{2} + \frac{1}{2}\sum_{i=0}^{d-1} \wt{c}_i (\mbe_i +
\mbe_{i+1} ).
\end{split}
\end{equation*}
Then one can check that $[\omega,u]=-tu, \; [\omega,v]=tv$.

\begin{definition}\label{trigCher}
Let $\mbc = (c_1, \ldots, c_{d-1}) \in \C^{d-1}$. We will call trigonometric Cherednik
algebra of rank one the algebra $\mbH_{t,\mbc}(\Ga) = \C[u^{\pm 1}] \ot_{\C[u]}
\msH_{t,\mbc}(\Ga)$.
\end{definition}
\begin{remark}
Trigonometric Cherednik algebras exist only for Weyl groups (real Coxeter groups), but we
propose to use the terminology in the previous definition because it is convenient.
Moreover, as is explained in \cite{Gu3}, $\mbH_{t,\mbc}(\Ga)$ depends actually only on
the $t$ parameter, that is, $ \mbH_{t,\mbc}(\Ga) \cong \mbH_{t,\mbc = \mathbf{0}}(\Ga) $,
  but this is not true for $\msH_{t,\mbc}(\Ga)$.  An explicit isomorphism
$\mbH_{t,\mbc}(\Ga) \iso \mbH_{t,\mbc = \mathbf{0}}(\Ga) $ is given by \[ v \mapsto v +
\left( \sum_{i=1}^{d-1} \frac{c_i}{1-\zeta^{-i}} \xi^i \right) u^{-1}. \] One can also
note that $ \mbH_{t,\mbc}(\Ga)$ is generated by $\omega, u,u^{-1},\Ga$ and that
$[\omega,u^{-1}]= tu^{-1}$.
\end{remark}

The associative algebras $ \mbH_{t,\mbc}(\Ga)$ and $\msH_{t,\mbc}(\Ga)$ can be
turned into Lie algebras in the usual way and the representation theory of a central
extension of the Lie algebra $\mfgl_n(\mbA_1)$ was studied in \cite{BKLY,KaRa}. (Here, $\mbA_1$ is the algebra of differential operators on $\C^{\times}$.)
The case of the quantum torus $ \mcD_q(\C^{\times}) = \C\langle u^{\pm 1},v^{\pm 1}
\rangle / ( vu = q
uv) $ was considered in \cite{BoLi}. In this section, we present some
results about the structure of the Lie algebras $ \mfsl_n(\msH_{t,\mbc}(\Ga)), \mfsl_n(
\mbH_{t,\mbc}(\Ga) ) $ mostly when $t\neq 0$.

The (Lie) algebras $ \mbH_{t,\mbc}(\Ga) $ and $ \msH_{t,\mbc}(\Ga) $  are graded:
$\mathrm{deg}(u)=-1, \mathrm{deg}(v)= 1, \mathrm{deg}(\ga)=0$. This induces gradings on
the associative algebras $M_n( \mbH_{t,\mbc}(\Ga) ), M_n(\msH_{t,\mbc}(\Ga))$ and on the
Lie algebras  $ \mfgl_n(\mbH_{t,\mbc}(\Ga)) ,
\mfgl_n(\msH_{t,\mbc}(\Ga))$. However, we will consider instead the following grading: \[
\mathrm{deg}(E_{ij}v^r u^s \ga) =
(r-s)n+j-i  .\] In the case $\mbH_{t=1,\mbc=\mathbf{0}}(\Ga=\{ 1 \})$, this is the
opposite of the principal $\Z$-gradation considered in \cite{BKLY}.  The graded pieces of
degree $k$ will be denoted $ \mfgl_n(\mbH_{t,\mbc}(\Ga))[k]$ and
$\mfgl_n(\msH_{t,\mbc}(\Ga))[k]$.

\subsection{Central extensions}

It was computed in \cite{AFLS} that $HH_1(A_1 \rtimes \Ga) = 0$, hence
$HC_1(A_1 \rtimes \Ga ) = 0$.  Furthermore, it is proved in \cite{EtGi} that, for any $\mbc$, $HH_1( \msH_{t,\mbc}(\Ga) ) =
0$ for all $t\in\C^{\times}$ except in a countable set. For all such values of $t,\mbc$, the Lie algebra $\mfsl_n(\msH_{t,\mbc}(\Ga))$ has no non-trivial central extension. For this reason, contrary to \cite{BKLY}, we will not consider central extensions.

\subsection{Parabolic subalgebras}

In this subsection, we will assume that $t\neq 0$, so, without loss of generality, let us
set $t=1$. For a Lie algebra with triangular decomposition, one usually wants to construct
representations by induction from its non-negative Lie subalgebra (a sort of Borel
subalgebra) or, more generally, from a bigger subalgebra which contains this one.  This
suggests that the following definition may be relevant.

\begin{definition}\label{paralg}\cite{BKLY} A parabolic subalgebra $\mfq$
of the Lie algebra  $\mfgl_n( \msH_{t=1,\mbc}(\Ga) )$ is a graded Lie subalgebra of the form   \[
\mfq = \oplus_{\Z} \mfq[k], \; \mfq[k] = \mfgl_n( \msH_{t=1,\mbc}(\Ga) )[k]
\mbox{ if } k\ge 0, \mfq[k] \subset \mfgl_n( \msH_{t=1,\mbc}(\Ga) )[k] \mbox{ if } k <
0).   \]
\end{definition}

For $k<0$, we can decompose $\mfq[k]$ as \[  \mfq[k] = \bigoplus_{\stackrel{r,l,i,j}{-rn+j-i=k}} E_{ij}  u^r
I_{k}^{i,l} \mbe_l \] for some subspace $I_{k}^{i,l} \subset \C[\omega]$.

\begin{lemma}\label{ideal}
The subspace $I_k^{i,l}$ is an ideal of $\C[\omega ]$.
\end{lemma}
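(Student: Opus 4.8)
The plan is to prove that each $I_k^{i,l}$ is stable under multiplication by $\omega$. Since $I_k^{i,l}$ is, by the very way the decomposition is set up, a linear subspace of $\C[\omega]$, and since any subspace of $\C[\omega]$ closed under multiplication by $\omega$ is automatically an ideal (the powers $\omega^a$ span $\C[\omega]$, so closure under $\cdot\omega$ propagates to closure under $\cdot p(\omega)$ by induction and linearity), this is all that is needed. The only structural input I would use is that $\mfq$ is a graded subalgebra containing the full non-negative part: in particular $\mfq[0]=\mfgl_n(\msH_{t=1,\mbc}(\Ga))[0]$ contains every $E_{ii}\,\omega^a\,\mbe_m$ (each has degree $0$, because $\omega$ has $\msH$-degree $0$ and $E_{ii}$ contributes $j-i=0$), and $[\mfq[0],\mfq[k]]\subseteq\mfq[k]$. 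So it suffices to find, for each $f\in\C[\omega]$ with $E_{ij}u^r f(\omega)\mbe_l\in\mfq$ (where $r=(j-i-k)/n\ge 0$), an element of $\mfq[0]$ whose bracket against $E_{ij}u^r f(\omega)\mbe_l$ delivers $E_{ij}u^r(\omega f)\mbe_l$, up to terms already known to lie in the block.

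The computation rests on the two commutations $\omega u^r=u^r(\omega-r)$ (from $[\omega,u]=-u$) and $\mbe_m u^r=u^r\mbe_{m-r}$ (from $\xi u=\zeta u\xi$), together with the fact that $\omega$ commutes with $\C[\Ga]$. With these, bracketing $E_{ii}\,\omega\,\mbe_m$ against a block vector $E_{ij}u^r f\mbe_l$ produces the idempotent-selected one-sided multiplications $E_{ij}u^r(\omega-r)f\mbe_l$ and $E_{ij}u^r\omega f\mbe_l$, the two terms being switched on or off by the congruences $m-r\equiv l$ and $m\equiv l$ respectively. When $j\neq i$ the second term disappears (because $E_{ij}E_{ii}=0$), so the choice $m\equiv l+r$ already yields $(\omega-r)f\in I_k^{i,l}$, whence $\omega f\in I_k^{i,l}$; this covers in particular the $r=0$ lower-triangular blocks. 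The same choice works for a diagonal block $j=i$ provided $r\not\equiv 0\bmod d$, since then the congruences $m\equiv l$ and $m\equiv l+r$ can be separated.

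The genuinely delicate case, and the one I expect to be the main obstacle, is a diagonal block $j=i$ with $r\equiv 0\bmod d$ (here $k=-rn$ forces $r\ge 1$); note this is the \emph{only} case when $\Ga$ is trivial, i.e. for the Weyl algebra $A_1$. Now both idempotent conditions coincide, and $\mathrm{ad}(E_{ii}\,\omega\,\mbe_m)$ collapses to $E_{ii}u^r[(\omega-r)-\omega]f\mbe_l=-r\,E_{ii}u^r f\mbe_l$, a scalar multiple carrying no new information. The remedy I would use is to bracket instead with the second-order element $E_{ii}\,\omega^2\,\mbe_m\in\mfq[0]$: choosing $m\equiv l$ this produces $E_{ii}u^r[(\omega-r)^2-\omega^2]f\mbe_l=E_{ii}u^r(r^2-2r\omega)f\mbe_l\in\mfq$, and subtracting the known multiple $r^2 f$ and dividing by $-2r\neq 0$ extracts $\omega f\in I_k^{i,l}$.

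In every case one thereby obtains $\omega\cdot I_k^{i,l}\subseteq I_k^{i,l}$, and the ideal property follows. The bookkeeping of the idempotent indices $\mbe_m$ and the check that the chosen generators really sit in $\mfq[0]$ are routine; the single conceptual point is the passage from $\omega$ to $\omega^2$ in the degenerate diagonal case, where first order is annihilated by antisymmetry but second order survives.
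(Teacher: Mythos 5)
Your argument is correct, and it rests on the same underlying mechanism as the paper's proof: bracketing a block vector $E_{ij}u^r f(\omega)\mbe_l\in\mfq[k]$ against degree-zero elements of $\mfq$ and exploiting the commutation $\omega u^r=u^r(\omega-r)$ so that the nonvanishing of $r$ produces genuinely new elements of $I_k^{i,l}$. The execution differs, though. The paper handles $r=0$ by bracketing with $E_{ii}f(\omega)\mbe_l$ (which gives $f p\in I_k^{i,l}$ for every $f$ at once, using $i\neq j$), and for $r>0$ it brackets with the scalar matrices $I\,\omega^{a+1}$: since these carry no idempotent and commute with all matrix units, there is no case distinction on $i=j$ or on $r\bmod d$, and one gets $E_{ij}u^r\big((\omega-r)^{a+1}-\omega^{a+1}\big)p\,\mbe_l\in\mfq[k]$; the leading term $-r(a+1)\omega^a$ then drives an induction on $a$ showing $\omega^a p\in I_k^{i,l}$. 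You instead reduce to $\omega$-stability and use the idempotent-decorated diagonal elements $E_{ii}\omega\mbe_m$, which forces the bookkeeping of $\mbe_m$ and the split into the cases $j\neq i$, $j=i$ with $r\not\equiv 0\ (\mathrm{mod}\ d)$, and the degenerate diagonal case $r\equiv 0\ (\mathrm{mod}\ d)$, where you correctly observe that the first-order bracket collapses to $-r f$ and you recover the missing information from $E_{ii}\omega^2\mbe_m$, since $(\omega-r)^2-\omega^2=r^2-2r\omega$ is linear in $\omega$ with $-2r\neq 0$. What your route buys is the elimination of the induction (everything follows from one or two brackets per block); what the paper's choice of scalar matrices buys is uniformity — no idempotent analysis and no case split — at the cost of the induction on the power of $\omega$. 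Both proofs use the same tacit fact that an element of $\mfq[k]$ supported in a single block $E_{ij}u^r\C[\omega]\mbe_l$ lies in $E_{ij}u^r I_k^{i,l}\mbe_l$, which is exactly the block decomposition stated before the lemma.
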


\begin{proof}
Suppose that $ E_{ij} u^r p(\omega )\mbe_l \in \mfq[k]$ with $p(\omega) \in I_k^{i,l}$, and choose $f(\omega ) \in
\C[\omega ]$. If $r=0$, then, since $ E_{ii} f(\omega )\mbe_l \in \mfq[0]$, we deduce
that, if $1\le i\neq j\le n$,  $[E_{ii} f(\omega )\mbe_l ,E_{ij} p(\omega )\mbe_l ]  =
E_{ij} f(\omega )p(\omega ) \mbe_l \in \mfq[k]$, hence $f(\omega )p(\omega ) \in
I_k^{i,l}$.

Now suppose that $r>0$. We want to prove by induction on $a \in \Z_{\ge 0}$ that
$\omega^a p(\omega)\in I_k^{i,l}$. We note that \[ [I \omega^{a+1}, E_{ij}(u^r p(\omega)
\mbe_l) ] = E_{ij}\big( u^r ((\omega-r)^{a+1} - \omega^{a+1}) p(\omega) \mbe_l \big) \in
\mfq[k] \] and that the term of highest power in $(\omega-r)^{a+1} - \omega^{a+1}$ is
$-r(a+1)\omega^a$, so that we can apply induction.
\end{proof}

Following the ideas of \cite{BKLY,KaRa}, we chose a monic generator $b_k^{i,l}(\omega)$
of the principal ideal $I_k^{i,l}$ if this ideal is non-zero; otherwise, we set
$b_k^{i,l}(\omega)=0$. These are called the characteristic polynomials of $\mfq$.

\begin{definition}\label{nondeg}
The parabolic subalgebra $\mfq$ is called non-degenerate if $\mfq[k]\neq 0 \, \forall
k\in\Z$.
\end{definition}

\begin{proposition}\label{nondegp}
A parabolic subalgebra $\mfq$ is non-degenerate if and only if the polynomials
$b_{-1}^{i,l}(\omega), 1\le i\le n, 0\le l\le d-1$ are all non-zero.
\end{proposition}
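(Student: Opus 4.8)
The plan is to reduce everything to the combinatorics of a single cyclic quiver attached to the degree $-1$ generators. First I would observe that in non-negative degrees the non-degeneracy condition is vacuous: by Definition \ref{paralg} we have $\mfq[k]=\mfgl_n(\msH_{1,\mbc}(\Ga))[k]$ for $k\ge 0$, and these pieces are nonzero, so $\mfq$ is non-degenerate if and only if $\mfq[-m]\neq 0$, i.e. some $I_{-m}^{i,l}\neq 0$, for every $m\ge 1$. Next I would record that, for fixed $k$ and $i$, the constraint $-rn+j-i=k$ with $1\le j\le n$ determines the pair $(r,j)$ uniquely (the admissible interval for $r$ has length $(n-1)/n<1$); at $k=-1$ this produces exactly the elements $E_{i,i-1}\mbe_l$ ($2\le i\le n$, $r=0$) and $E_{1,n}u\mbe_l$ ($r=1$). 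Using $u\mbe_l=\mbe_{l+1}u$ and $[\omega,\mbe_l]=0$, these behave as the arrows of a single directed cycle of length $nd$ on the vertex set $\{1,\dots,n\}\times\{0,\dots,d-1\}$, namely $(i,l)\mapsto(i-1,l)$ for $i\ge 2$ and $(1,l)\mapsto(n,l+1)$. A basis element of $\mfq[-m]$ supported at a fixed matrix/idempotent position then corresponds to walking $m$ consecutive steps along this cycle, its coefficient being a power of $u$ times a product of the $\omega$-polynomials attached to the arrows traversed.

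For sufficiency, assuming every $I_{-1}^{i,l}\neq 0$ with monic generator $b_{-1}^{i,l}(\omega)$, I would, for each $m\ge 1$, choose a path of length $m$ in the cycle and form the iterated bracket of the corresponding generators $E_{\bullet}\,b_{-1}^{\bullet}(\omega)\mbe_\bullet$. Since only lowering arrows occur, no occurrence of $v$ is created, and the resulting coefficient is $u^{c}\prod_t b_{-1}^{(t)}(\omega)\,\mbe_l$ where $c$ counts the wraps. Writing $\omega\mbe_l=(-uv+\textrm{scalar})\mbe_l$ and using the PBW basis $\{u^a v^b\mbe_l\}$ of $\msH_{1,\mbc}(\Ga)$, this product has a nonzero top term, hence is a nonzero element of $\mfq[-m]$; this yields non-degeneracy.

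For necessity I would argue the contrapositive. Suppose $I_{-1}^{i_0,l_0}=0$ and fix $m\ge nd$, so that every length-$m$ path in the cycle traverses the arrow labelled $(i_0,l_0)$. Given any nonzero $X=E_{ij}u^r p(\omega)\mbe_l\in\mfq[-m]$, I would repeatedly bracket with degree $+1$ elements --- all of them available because $\mfq[1]$ is the full graded piece --- so as to peel arrows off both ends of the path until only the arrow $(i_0,l_0)$ survives, landing in $\mfq[-1]$. Peeling a lowering arrow is clean and leaves the coefficient unchanged up to a nonzero scalar; peeling across a wrap arrow invokes $vu=uv+t+\sum_{i}c_i\xi^i$ and therefore drops the $u$-power by one while multiplying the coefficient by a nonzero polynomial $g(\omega)$ (and shifting its argument by an integer). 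The surviving element lies in the $(i_0,l_0)$-component of $\mfq[-1]$, so the resulting coefficient $g(\omega)p(\omega)$ (up to an argument shift) lies in $I_{-1}^{i_0,l_0}=0$; since $\C[\omega]$ is a domain and $g\neq 0$, we get $p=0$, a contradiction. Hence $\mfq[-m]=0$ and $\mfq$ is degenerate.

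The technical heart --- and the step I expect to be the main obstacle --- is precisely this peeling procedure across wrap arrows: keeping careful track of how the commutation relation $vu-uv=t+\sum_i c_i\xi^i$ transforms the $\omega$-coefficient, and verifying that the polynomial factors it introduces are nonzero, so that landing in a vanishing ideal genuinely forces the coefficient to vanish. By contrast, the sufficiency direction is comparatively routine once the nonvanishing of the products $u^{c}\prod_t b_{-1}^{(t)}(\omega)\mbe_l$ in $\msH_{1,\mbc}(\Ga)$ is confirmed from the PBW basis.
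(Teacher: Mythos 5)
Your argument is correct and, at bottom, runs on the same engine as the paper's proof: explicit brackets of a single component $E_{ij}u^rp(\omega)\mbe_l$ with degree $\pm 1$ elements, tracking how the $\omega$-coefficient is multiplied by a nonzero polynomial (possibly with an integer shift of the argument, and with the wrap relation $vu-uv=t+\sum_i c_i\xi^i$ contributing a nonzero linear factor), plus the fact that $\C[\omega]$ is a domain. The difference is in the packaging and in how much of the converse is actually written down. The paper reformulates non-degeneracy as the nonvanishing of all $b_k^{i,l}$ for $k\le -1$ and proves the ``if'' direction by a degree-by-degree induction with divisibility statements, computing $[\mfq[-1],\mfq[k]]\subset\mfq[k-1]$; your sufficiency step (iterated brackets of degree $-1$ generators along a path of the cyclic quiver, nonzero by the PBW basis) is the same computation done in one stroke. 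For the ``only if'' direction the paper only asserts ``similar computations'' giving: $b_{k-1}^{i,l}\neq 0$ implies $b_k^{i-1,l}\neq 0$; note that iterating this single statement moves only the row index and keeps the idempotent, so by itself it reaches just one of the $nd$ degree $-1$ components. Your two-ended peeling (left multiplication by $E_{i-1,i}$- or $E_{n1}v$-type elements at the target end, right multiplication at the source end, with $m\ge nd$ so every length-$m$ path crosses the prescribed arrow $(i_0,l_0)$) is exactly what is needed to hit an arbitrary prescribed $I_{-1}^{i_0,l_0}$, so your write-up usefully completes what the paper leaves implicit. Two bookkeeping points you should make explicit: first, that a nonzero $\mfq[-m]$ contains a nonzero \emph{pure} component $E_{ij}u^rp(\omega)\mbe_l$ uses the decomposition $\mfq[k]=\bigoplus E_{ij}u^r I_k^{i,l}\mbe_l$, which follows from stability of $\mfq[k]$ under $\mathrm{ad}(E_{ii}\mbe_l)$ with $E_{ii}\mbe_l\in\mfq[0]$ (the paper also assumes this silently); second, the step you rightly flag as the technical heart works because in each peel the two products $YX$ and $XY$ land in different components (their rows, columns or idempotents differ whenever $n\ge 2$), so the extracted coefficient is exactly the old one times the nonzero factor $g(\omega)$ and injectivity is preserved; the degenerate case $n=1$ needs a separate (easy) check, but the paper's own computations also tacitly assume $n\ge 2$.
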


\begin{proof}
The parabolic subalgebra $\mfq$ is non-degenerate if and only if the polynomials
$b_{k}^{i,l}(\omega), 1\le i\le n, 0\le l\le d-1$ are all non-zero for all $k\in\Z_{\le
-1}$, so it is enough to prove the following for $i=1,\ldots,n$:  if $b_{k}^{i,l}(\omega) \neq 0$ and $b_{-1}^{i-1,l}(\omega) \neq 0$, $b_{k-1}^{i,l}(\omega)$ is also non-zero and divides $b_{k}^{i-1,l}(\omega) b_{-1}^{i,l+r}(\omega-r)$. Here, if $2\le i\le n$, $r$ is determined by $k+i-1=-rn+j$ for some $1\le j\le n$. (We set
$b_{k}^{0,l}(\omega) = b_{k}^{n,l}(\omega)$.)

If $i\neq 1$, we have \begin{equation*}\begin{split} \hspace{1cm} & [ E_{i,i-1} b_{-1}^{i,l+r}(\omega) \mbe_{l+r}, E_{i-1,j} u^r b_{k}^{i-1,l}(\omega) \mbe_l ]  \\ & = E_{ij} u^r b_{-1}^{i,l+r}(\omega-r) b_{k}^{i-1,l}(\omega) \mbe_l - \delta_{j,i} \delta_{r0} E_{i-1,i-1} u^r b_{-1}^{i,l+r}(\omega) b_{k}^{i-1,l}(\omega) \mbe_{l+r}  \in \big[ \mfq[-1],\mfq[k]\big]  \end{split}\end{equation*} and $\big[ \mfq[-1],\mfq[k]\big]
\subset \mfq[k-1]$, so $ b_{-1}^{i,l+r}(\omega-r) b_{k}^{i-1,l}(\omega)  \in I_{k-1}^{i,l} $ and the claim is true if $i\neq n$.

To prove the claim when $i=1$ (and with $r$ determined by $k=-(r+1)n+j$), we consider the following commutator:  \begin{equation*}\begin{split} \hspace{1cm} & [ E_{1n}
u b_{-1}^{1,l+r}(\omega) \mbe_{l+r} , E_{nj} u^r b_{k}^{n,l}(\omega) \mbe_l ] \\
 & = E_{1j} u^{r+1} b_{-1}^{1,l+r}(\omega-r)  b_{k}^{n,l}(\omega) \mbe_l - \delta_{j1} \delta_{r,-1} E_{nn} u^{r+1} b_{k}^{n,l}(\omega-1)b_{-1}^{1,l+r}(\omega) \mbe_{l+r}  \end{split} \end{equation*} which belongs to $ \big[ \mfq[-1],\mfq[k]\big] \subset \mfq[k-1]$, so $  b_{-1}^{1,l+r}(\omega-r)  b_{k}^{n,l}(\omega) \in I_{k-1}^{1,l}$.

Using similar computations, one can prove the following for $i=1,\ldots,n$: if $b_{k-1}^{i,l}(\omega)\neq 0 $, then $b_{k}^{i-1,l}(\omega) \neq 0$ and it divides $b_{k-1}^{i,l}(\omega)$.
\end{proof}

The characteristic polynomials $ b_{-1}^{i,l}( \omega ), 1\le i\le n, 0\le l\le d-1$ can
help us describe the derived Lie subalgebra $[\mfq,\mfq]$.  Set \begin{equation*}
\begin{split}
\mfgl_n(\msH_{t=1,\mbc}(\Ga) )[0,\mbb] =  \mathrm{span} \{ H_i \omega^{r}
b_{-1}^{i+1,l}(\omega) \mbe_{l} | 1\le i\le n-1, r\in\Z_{\ge 0}, 0\le l \le d-1  \} \\
  \oplus \, \mathrm{span}\{ E_{11} uv(\omega+1)^{r} b_{-1}^{1,l}(\omega+1)\mbe_{l+1} - E_{nn}
vu \omega^{r} b_{-1}^{1,l}(\omega)\mbe_{l} | r\in\Z_{\ge 0}    \}
\end{split}
\end{equation*}

\begin{proposition}\label{der}
Let $\mbb=( b_{-1}^{i,l}(\omega))_{1\le i\le n}^{0\le l\le d-1}$ be the first $nd$
characteristic polynomials of the parabolic subalgebra $\mfq$. Then \[ [\mfq,\mfq] =
\left (\bigoplus_{\stackrel{k\in\Z}{k\neq 0}} \mfq[k] \right ) \oplus \mfgl_n(
\msH_{t=1,\mbc}(\Ga) )[0,\mbb]  .\]
\end{proposition}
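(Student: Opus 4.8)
The plan is to compute $[\mfq,\mfq]$ grade by grade, exploiting that the grading is inner. First I would exhibit a grading element inside $\mfq[0]$: put $D=n\,\omega\,I-\sum_{a=1}^{n}a\,E_{aa}$. Both $\omega I$ and $\sum_a a E_{aa}$ are diagonal with entries in $\C[\omega]$, so $D\in\mfgl_n(\msH_{t=1,\mbc}(\Ga))[0]=\mfq[0]$; and since $[\omega,v^{r}u^{s}]=(r-s)v^{r}u^{s}$, one checks $[D,E_{ij}v^{r}u^{s}\ga]=\big(n(r-s)+j-i\big)E_{ij}v^{r}u^{s}\ga$, i.e. $\mathrm{ad}(D)$ acts on $\mfq[k]$ as multiplication by $k$. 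Hence for every $k\neq0$ and $x\in\mfq[k]$ we have $x=\tfrac1k[D,x]\in[\mfq[0],\mfq[k]]\subseteq[\mfq,\mfq]$, which together with $[\mfq,\mfq][k]\subseteq\mfq[k]$ gives $[\mfq,\mfq][k]=\mfq[k]$ for all $k\neq0$. This accounts for the first summand.

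For the degree-zero part, note that $\mfq[0]=\bigoplus_{a,l}E_{aa}\C[\omega]\mbe_l$ is abelian, since distinct diagonal matrix units commute and $\omega$ commutes with every $\mbe_l$. Therefore $[\mfq,\mfq][0]=\sum_{k\ge1}[\mfq[k],\mfq[-k]]$. I would then collapse all of these onto $[\mfq[1],\mfq[-1]]$. For $k\ge1$ the piece $\mfq[k]=\mfgl_n(\msH_{t=1,\mbc}(\Ga))[k]$ is the full graded component, and the positive part $\bigoplus_{k\ge1}\mfgl_n(\msH_{t=1,\mbc}(\Ga))[k]$ is generated by its degree-one component; granting this, $\mfq[k]\subseteq[\mfq[1],\mfq[k-1]]$ for $k\ge2$, and the Jacobi identity together with $[\mfq[k-1],\mfq[-k]]\subseteq\mfq[-1]$ and $[\mfq[1],\mfq[-k]]\subseteq\mfq[-(k-1)]$ yields
\[ [\mfq[k],\mfq[-k]]\subseteq[\mfq[1],\mfq[-1]]+[\mfq[k-1],\mfq[-(k-1)]]. \]
Induction on $k$ then gives $[\mfq,\mfq][0]=[\mfq[1],\mfq[-1]]$.

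It remains to compute the degree-one bracket. Here $\mfq[1]$ is spanned by the $E_{i,i+1}q(\omega)\mbe_l$ ($1\le i\le n-1$) and the $E_{n1}v\,q(\omega)\mbe_l$, while $\mfq[-1]$ is spanned by the $E_{i,i-1}b_{-1}^{i,l}(\omega)\C[\omega]\mbe_l$ ($2\le i\le n$) and the $E_{1n}u\,b_{-1}^{1,l}(\omega)\C[\omega]\mbe_l$. In a bracket $[E_{ab}X,E_{cd}Y]=\delta_{bc}E_{ad}XY-\delta_{ad}E_{cb}YX$ the output is diagonal, so only the index patterns $\{a=d,\ b=c\}$ survive; a short check shows the only such matches are the adjacent pairs $(i,i+1)$–$(i+1,i)$ and the affine pair $(n,1)$–$(1,n)$. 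The first gives (with $H_i=E_{ii}-E_{i+1,i+1}$)
\[ [E_{i,i+1}q(\omega)\mbe_l,\ E_{i+1,i}b_{-1}^{i+1,l}(\omega)\mbe_l]=H_i\,q(\omega)b_{-1}^{i+1,l}(\omega)\mbe_l, \]
which span the first family of generators as $q$ ranges over $\C[\omega]$. For the second I would move $u,v$ past the idempotents and past $\omega$ using $v\mbe_l=\mbe_{l-1}v$, $u\mbe_l=\mbe_{l+1}u$, $\omega v=v(\omega+1)$, $\omega u=u(\omega-1)$, exactly as in the proof of proposition \ref{nondegp}, obtaining
\[ [E_{n1}v(\omega+1)^{r}\mbe_{l+1},\ E_{1n}u\,b_{-1}^{1,l}(\omega)\mbe_l]=E_{nn}vu\,\omega^{r}b_{-1}^{1,l}(\omega)\mbe_l-E_{11}uv\,(\omega+1)^{r}b_{-1}^{1,l}(\omega+1)\mbe_{l+1}, \]
the negative of the second family of generators. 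Thus $[\mfq[1],\mfq[-1]]=\mfgl_n(\msH_{t=1,\mbc}(\Ga))[0,\mbb]$, which finishes the proof.

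The step I expect to be most delicate is the reduction in the second paragraph: one must verify that the full positive part really is generated in degree one, so that no degree-zero contributions coming from $k\ge2$ escape the induction, and one must carry out the idempotent- and $\omega$-shift bookkeeping in the affine bracket carefully enough to be sure it produces precisely the asymmetric $uv$-versus-$vu$ generator with the shifts $\omega\mapsto\omega+1$ and $\mbe_l\mapsto\mbe_{l+1}$, rather than some other degree-zero element.
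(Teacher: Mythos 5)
Your proposal is correct and follows essentially the same route as the paper: both reduce the statement to the equality $[\mfq[1],\mfq[-1]]=\mfgl_n(\msH_{t=1,\mbc}(\Ga))[0,\mbb]$ and then verify it by the same explicit bracket computations (your two displayed brackets are, up to sign and ordering of the arguments, exactly those in the paper). The additional bookkeeping you supply --- the grading element $D$ handling all components of nonzero degree and the Jacobi-identity induction collapsing $[\mfq[k],\mfq[-k]]$ onto $[\mfq[1],\mfq[-1]]$ --- merely fleshes out what the paper asserts in one line via $\mfq[k+1]=[\mfq[k],\mfq[1]]$ for $k\ge 0$, and the one fact you flag but leave unproved (that the positive part is generated by its degree-one component) is likewise assumed without proof in the paper.
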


\begin{proof}
Since $\mfq[k+1] = [\mfq[k],\mfq[1]]$ if $k\in\Z_{\ge 0}$, it is enough to show that
$[\mfq[1],\mfq[-1]] = \mfgl_n( \msH_{t=1,\mbc}(\Ga) )[0,\mbb]$.   \[ [
E_{i,i+1}\omega^{r} \mbe_{l_1} , E_{i+1,i} b_{-1}^{i+1,l_2}(\omega) \mbe_{l_2} ] =
\delta_{l_1l_2} (E_{ii} - E_{i+1,i+1}) \omega^{r} b_{-1}^{i+1,l_2}(\omega) \mbe_{l_1}  \]
\begin{equation*} \begin{split}  [ E_{1n} u b_{-1}^{1,l_1}(\omega) \mbe_{l_1}, E_{n1} v
(\omega+1)^{r} \mbe_{l_2} ] = & \delta_{l_1+1,l_2} \big( E_{11} uv (\omega+1)^{r}
b_{-1}^{1,l_1}(\omega+1)\mbe_{l_2} \\
 & - E_{nn} vu \omega^{r} b_{-1}^{1,l_1}(\omega)\mbe_{l_1} \big) \end{split}
\end{equation*}
\[ [ E_{i,i+1}\omega^{r} \mbe_{l_1} , E_{1n}u b_{-1}^{1,l_2}(\omega) \mbe_{l_2} ] = 0,
\;\; [E_{i+1,i} b_{-1}^{i+1,l_1}(\omega) \mbe_{l_1} , E_{n1} v \omega^r \mbe_{l_2} ] = 0 \text{ if } 1\le i\le n-1\]
\end{proof}

\subsection{Embedding into $\ol{\mfgl}_{\infty}$}

One of the main ingredients in the study of the representation theory of
$\mfgl_n(\mbA_1)$ in \cite{BKLY,KaRa} is an embedding of the algebra $M_n(\mbA_1)$ into
the algebra $\ol{M}_{\infty}$ of infinite matrices with only finitely many non-zero
diagonals. This induces an embedding of the Lie algebra $\mfgl_n(\mbA_1)$ into
$\ol{\mfgl}_{\infty}$. It comes from the action of $\mfgl_n(\mbA_1)$ on $\C^n \otimes \C[u,u^{-1}]$. In this subsection, we obtain similar embeddings for $\mfgl_n(
\msH_{t=1,\mbc}(\Ga) )$ and
$\mfgl_n(\mbA_1 \rtimes \Ga)$ when $\Ga$ is cyclic. The embedding $\mfgl_n(\mbA_1 \rtimes
\Ga) \into \ol{\mfgl}_{\infty}$ is the same as the one considered in
\cite{BKLY,KaRa} when $\Ga$ is trivial, and $ \mfgl_n( \msH_{t=1,\mbc}(\Ga) ) \into \ol{\mfgl}_{\infty}$ comes also from the action of  $\mfgl_n( \msH_{t=1,\mbc}(\Ga) )$ on $\C^n \otimes \C[u,u^{-1}]$ via the Dunkl embedding of $\msH_{t=1,\mbc}(\Ga)$. (We will reserve the notation $M_{\infty}$ and $\mfgl_{\infty}$ for the algebra and the Lie algebra
of infinite matrices with finitely many non-zero entries.)

The space $M_{\infty}$ has a linear basis given by elementary matrices $E_{ij}$ with
$(i,j) \in
\Z \times \Z$. The embedding of associative algebras $\mathbf{\iota}: M_n(\mbA_1
\rtimes\Ga) \into
\ol{M}_{\infty}$ is given explicitly by the formula  \[ \mathbf{\iota} (E_{ij} u^{s}
\omega^r \mbe_k) = \sum_{l \in \Z}
(-ld-k)^r E_{(ld+k+s)n + i - 1, (ld+k)n + j - 1}  , \;\; s\in \Z, 0\le k \le d-1.\] This
restricts to an embedding $\iota: M_n(\msH_{t=1,\mbc}(\Ga)) \into \ol{M}_{\infty}$ by
pulling back via $ \msH_{t=1,\mbc}(\Ga) \into \mbH_{t=1,\mbc}(\Ga)  \cong \mbA_1 \rtimes
\Ga$. Explicitly, since  \[ v = -u^{-1} (\omega - \sum_{l=0}^{d-1} \wt{c}_{l-1} \mbe_{l}
), \] we get  \[ \iota(E_{ij} v \mbe_k) = \sum_{l \in \Z} ( ld+k + \wt{c}_{k-1} )
E_{(ld+k-1)n + i - 1, (ld+k)n + j - 1}.  \] This can be extended to \begin{equation} \iota(E_{ij} v^s \omega^r \mbe_k) = \sum_{l \in \Z} \left( \prod_{p=0}^{s-1}( ld+k-p + \wt{c}_{k-p-1} ) \right) (-ld-k)^r E_{(ld+k-s)n + i - 1, (ld+k)n + j - 1}. \label{iotav} \end{equation}  The principal grading on the algebra $\ol{M}_{\infty}$ and on the Lie algebra $\ol{\mfgl}_{\infty}$ is given by
$\mathrm{deg}(E_{ij}) = j-i$ and the
embedding $\iota$  respects all the gradings.

We will need, as in \cite{BKLY}, to consider infinite matrices over the ring of truncated
polynomials $R_m = \C[t]/(t^{m+1})$. Fixing $a\in\C$, we define an algebra map
$\varphi_a^{[m]}: M_n(
\msH_{t=1,\mbc}(\Ga) ) \lra \ol{M}_{\infty}(R_m) $ by \[ \varphi_a^{[m]} (E_{ij} u \mbe_k)
= \sum_{l \in \Z} E_{(ld+k+1)n + i - 1, (ld+k)n + j - 1}, \]  \[ \varphi_a^{[m]} (E_{ij}
v \mbe_k) = \sum_{l \in \Z} ( ld+k + a + t + \wt{c}_{k-1}) E_{(ld+k-1)n + i - 1,
(ld+k)n + j - 1}.    \] This extends to a map $ \varphi_a^{[m]} : M_n( \mbA_1 \rtimes \Gamma ) \lra \ol{M}_{\infty}(R_m)$. Explicitly, \[ \varphi_a^{[m]}  (E_{ij} u^{s}
\omega^r \mbe_k) = \sum_{l \in \Z} (-ld-k -a-t)^r E_{(ld+k+s)n + i - 1, (ld+k)n + j - 1}  , \;\; s\in \Z, 0\le k \le d-1.\] 

This embedding when $d=2,n=1$ is related to the embedding considered in \cite{Sh} from the Lie algebra $\mfgl_{\la}$ to $\ol{\mfgl}_{\infty,s}$ (in the notation of \cite{Sh}) since $\mfgl_{\la}$ is obtained by turning into a Lie algebra a certain primitive quotient of $\mfU\mfsl_2(\C)$ and this primitive quotient is isomorphic to the spherical subalgebra of $\msH_{t=1,c=\lambda}(\Z/2\Z)$. 

\subsection{Geometric interpretation}

It was observed in \cite{KaRa} that the algebra of holomorphic differential operators on $\C^{\times}$ has a
geometric interpretation in terms of a certain infinite dimensional vector bundle over
the cylinder $\C / \Z$. The algebras $\mbA_1 \rtimes \Ga$ and $A_1 \rtimes \Ga$ afford
similar interpretations: to explain it, we have to extend them to a holomorphic
setting.

Let $ \mcO(w) $ be the ring of entire functions (holomorphic on all of $\C$) in the
variable $w$.  Let $ \mbA^{\mcO}_1 \rtimes \Ga $ to be the span of the operators of the form
$u^{r}f(w)\ga$ with $f \in \mcO(w),\, r\in\Z$. This span has an algebra structure extending the one
on $\mbA_1 \rtimes \Ga$. Let $ A^{\mcO}_1 \rtimes \Ga $ be the subalgebra of $\mbA^{\mcO}_1
\rtimes \Ga$ consisting of linear combinations of operators of the form $u^{r}f(w)\ga$ and $v^{s}f(w)\ga$ with $r,s\ge 0$ and $f$ holomorphic.

For $k\in\Z$, we define an automorphism $\theta_k$ of $ \ol{M}_{\infty} $ and of
$\ol{\mfgl}_{\infty}$ by $\theta_k(E_{ij}) = E_{i+k,j+k}$. For $0\le k\le d-1$, let $ \ol{M}_{\infty}^k \subset \ol{M}_{\infty}$ be the subspace of matrices such that the $(i,j)$ entry is zero if $j \not\in \cup_{l\in\Z} [ldn+kn,ldn+(k+1)n[$.  The following definition is
adapted from definition 3.4 in \cite{KaRa}.

\begin{definition}\label{monoloop}
A $(n,d)$-monodromic loop is a holomorphic map $\ell: \C \lra
\ol{M}_{\infty}$ such that $\ell(w) = \ell_0(w) + \cdots + \ell_{d-1}(w)$ with $\ell_k(w-d) = \theta_{n}^d \ell_{k}(w)$ and $\ell_k(w) \in \ol{M}_{\infty}^k$ for $0\le k\le d-1$.
\end{definition}

When $d=1$, the following proposition was established in \cite{KaRa}.

\begin{proposition}
The algebra $M_n( \mbA^{\mcO}_1 \rtimes \Ga)$ is isomorphic to the algebra $\mcL_{n,d}$ of
$(n,d)$-monodromic loops.
\end{proposition}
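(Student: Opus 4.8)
The plan is to promote the embedding $\iota$ to a holomorphic-loop-valued map by replacing the integer eigenvalue $-ld-k$ of $\om$ on the column block $ld+k$ with the continuous spectral parameter $-w-ld-k$. Concretely, I would define $\Phi\colon M_n(\mbA^{\mcO}_1\rtimes\Ga)\lra\mcL_{n,d}$ on the spanning operators by
\[ \Phi(E_{ij}u^{s}f(\om)\mbe_k)(w)=\sum_{l\in\Z} f(-w-ld-k)\,E_{(ld+k+s)n+i-1,\,(ld+k)n+j-1}, \]
for $f\in\mcO(w)$, extending linearly (this is legitimate since by definition $\mbA^{\mcO}_1\rtimes\Ga$ is spanned by operators $u^{s}f(\om)\ga$). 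Evaluating the loop at $w=0$ returns the matrix $\iota(E_{ij}u^{s}f(\om)\mbe_k)$, so $\Phi$ is a genuine holomorphic refinement of $\iota$.

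First I would check that $\Phi$ lands in $\mcL_{n,d}$. The $k$-th summand is supported on columns $(ld+k)n+j-1\in[ldn+kn,ldn+(k+1)n[$, hence lies in $\ol{M}_{\infty}^{k}$; holomorphy in $w$ is immediate as each entry is a single value $f(-w-ld-k)$ of an entire function; and only the diagonal $sn+i-j$ is occupied, so the image has finitely many non-zero diagonals. The monodromy is the one genuine computation: the $l$-th entry is $f(-w-ld-k)$, so replacing $w$ by $w-d$ turns the $l$-entry into the $(l-1)$-entry, which is exactly the shift of indices by $nd$ effected by $\theta_n^{d}$; thus $\ell_k(w-d)=\theta_n^{d}\ell_k(w)$. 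I would also record that $\mcL_{n,d}$ is closed under the pointwise matrix product: writing $\ell=\sum_k\ell_k$, the product $\ell_k\ell'_{k'}$ lies in $\ol{M}_{\infty}^{k'}$, the finite-diagonal condition is preserved, and since $\theta_n^{d}$ is an algebra automorphism of $\ol{M}_{\infty}$ the monodromy passes to products; hence $\mcL_{n,d}$ is an associative algebra with unit the constant identity loop.

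The main step, and the one I expect to be most delicate, is multiplicativity of $\Phi$. I would reduce it to the generators $u^{\pm1}$, $f(\om)$, the idempotents $\mbe_k$, and the matrix units $E_{ij}$, checking it against the defining relations $u\mbe_k=\mbe_{k+1}u$, $\om\mbe_k=\mbe_k\om$, $u\,f(\om)=f(\om+1)\,u$ and $E_{ij}E_{pq}=\delta_{jp}E_{iq}$. The content is that moving $u^{s}$ past $f(\om)$ shifts the argument to $f(\om-s)$, and that this shift is reproduced on the loop side: in the matrix product the column block of the left factor must equal the row block of the right factor, and solving this matching condition produces precisely the substitution $-w-ld-k\mapsto -w-ld-k-s$ in the argument of $f$, while $u\mbe_k=\mbe_{k+1}u$ is matched by the $+n$ shift of the row block. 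The obstacle is purely the index bookkeeping in $\ol{M}_{\infty}$ (tracking $n$, $d$, the residues $k$, the winding $l$, and the shifts $s$ at once); no idea beyond the verification already needed for $\iota$ in the case $d=1$ of \cite{KaRa} is required.

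Finally I would establish bijectivity. Injectivity follows by reading entries: the basic loops attached to distinct data $(i,j,s,k)$ occupy disjoint sets of matrix positions, and for fixed data the assignment $f\mapsto\big(w\mapsto f(-w-k)\big)$ is injective on $\mcO(w)$ as an identity of holomorphic functions of the continuous variable $w$; so $\Phi(a)=0$ forces every coefficient function to vanish. (Note that evaluation at $w=0$ alone would not suffice, since an entire function can vanish on the progression $\{-ld-k\}$.) For surjectivity, given $\ell\in\mcL_{n,d}$ I would use that it has only finitely many non-zero diagonals: on a diagonal $p$, within $\ell_k$, the non-zero entries sit at the positions $((ld+k+s)n+i-1,(ld+k)n+j-1)$ for the unique $s,i,j$ with $sn+i-j=p$, and the monodromy condition forces the family indexed by $l$ to be a single holomorphic function $w\mapsto f_{i,j,s,k}(-w-ld-k)$ for one entire $f_{i,j,s,k}$. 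Reading off these finitely many functions exhibits $\ell=\Phi\big(\sum E_{ij}u^{s}f_{i,j,s,k}(\om)\mbe_k\big)$, so $\Phi$ is onto and hence an isomorphism of algebras.
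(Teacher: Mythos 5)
Your proposal is correct and follows essentially the paper's own route: your $\Phi$ is exactly the paper's map $E \mapsto \bigl( w \mapsto \varphi^{[0]}_w(E)\bigr)$ (the holomorphic refinement of $\iota$ with spectral parameter $-w-ld-k$), and your surjectivity/injectivity argument by reading off the entire coefficient functions along each diagonal of each $\ell_k$, with the monodromy condition collapsing the $l$-dependence, is the paper's inverse construction. The only slip is cosmetic: on a fixed diagonal $p$ of $\ell_k$ the triple $(s,i,j)$ with $sn+i-j=p$ is not unique (each column position determines its own $(s,i)$, and $s$ takes two values across the block), which is why the paper's preimage formula splits into two sums involving $u^{s}$ and $u^{s+1}$; correcting this bookkeeping does not change your argument.
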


\begin{proof}
We can construct a map $ M_n(\mbA^{\mcO}_1 \rtimes \Ga) \lra \mcL_{n,d}$ by $E \mapsto ( w
\mapsto \varphi_w^{[0]}(E) )$. That the formula $w \mapsto \varphi_w^{[0]}(E)$ defines an
$(n,d)$-monodromic loop follows from the formula for $\varphi_w^{[0]}$. The inverse is
given in the following way. If, given a monodromic loop $\ell$, the loop $\ell_k$ is concentrated along the $(sn+m)^{th}$ ($0\le m\le n-1$) diagonal below the main one (so $sn+m\ge 0$; if it is above, the argument is similar), so that $\ell_k = \sum_{i=1}^{n} \sum_{l\in\Z} f_{i,l,k}(w) E_{(ld+k+s)n+m+i-1,(ld+k)n+i-1} $, then
the preimage of $\ell_k$ is $ \sum_{i=1}^{n-m} E_{m+i,i}  u^s f_{i,0,k}(-\omega-k)\mbe_k + \sum_{i=1}^{m} E_{i,n-m+i}  u^{s+1} f_{n-m+i,0,k}(-\omega-k)\mbe_k $. 
\end{proof}

Since $ M_n( A^{\mcO}_1 \rtimes \Ga ) \into M_n(\mbA^{\mcO}_1 \rtimes \Ga) $, we can identify
$M_n( A^{\mcO}_1 \rtimes \Ga )$ with the algebra of $(n,d)$-monodromic loops $\ell$ such
that, writing $\ell(w) = \sum_{i,j\in\Z} \ell_{i,j}(w) E_{ij}$, we have
that, if $i=l_1n+p_1-1, j=l_2n+p_2-1, 1\le p_1,p_2 \le n$ and $l_1 < l_2$, then $\ell_{i,j}(w)=0$ for $w=p-l_2d$ and $p=0,\ldots,l_2-l_1-1$.

\section{Highest weight representations for matrix Lie algebras over Cherednik algebras of rank one}\label{hwrep}

Inspired by the papers \cite{GGOR,Gu1}, we suggest a notion of
category $\mcO$ for the Lie algebra $\mfsl_n( \msH_{t=1,\mbc}(\Ga) )$ and we study
certain modules in it which we call
quasi-finite highest weight modules.

\begin{definition} Assume that $t\neq 0$. The category $\mcO\big( \mfsl_n(
\msH_{t=1,\mbc}(\Ga) )\big)$ is the category of finitely generated modules $M$ over
$\mfU\mfsl_n(\msH_{t=1,\mbc}(\Ga))$ upon which $\mfsl_n(v\C[v])$ acts locally nilpotently.
\end{definition}

This definition applies also to the $\Ga$-deformed double current algebras $
\msD_{\la,\mbb}^n(\Ga)$ of \cite{Gu3}. One justification for it is that the Schur-Weyl
functor studied in
\cite{Gu1,Gu3} sends modules in the category $\mcO$ of a rational Cherednik algebra for
$\Ga^{\times l} \rtimes S_l$ to a module in $\mcO\big( \msD_{\la,\mbb}^n(\Ga)\big)$ (for
appropriate values of $\la,\mbb$).  It is possible, using induction, to construct analogs
of Verma modules in this category, and one can ask about the classification of
irreducible (integrable) modules in the category $\mcO\big( \mfsl_n( \msH_{t=1,\mbc}(\Ga)
)\big)$. We will not try to answer this
question. Instead, we will study certain modules in these categories by following the
ideas in \cite{BKLY,KaRa}.

Recall the grading on $\mfsl_n(A_1\rtimes \Ga)$ and the
embeddings $\varphi_a^{[m]}: \mfsl_n(A_1\rtimes \Ga) \into \ol{\mfgl}_{\infty}(R_m)$. The
Lie algebra $\ol{\mfgl}_{\infty}$ has an obvious triangular structure compatible with the
grading given by $\mathrm{deg}(E_{ij}) = j-i$ and the embeddings $\varphi_a^{[m]}$ respect the
grading on the source and target spaces.  The following definition comes naturally from
the triangular structure.

\begin{definition}\label{Verma}
Let $\la_{i,k,r} \in \C, \, 1\le i\le n,\, 0\le k\le d-1,\, r\in\Z_{\ge 0}$ and let $\la
\in \mfgl_n( \msH_{1,\mbc}(\Ga) ) [0]^*$ be given by $\la(E_{ii}w^r\mbe_k) =
\la_{i,k,r}$. Extending $\la$ to a one-dimensional representation $\C_{\la}$ of the Lie algebra $ \mfgl_n(
\msH_{1,\mbc}(\Ga) ) [\ge 0]$ ($=\oplus_{k=0}^{\infty} \mfgl_n( \msH_{1,\mbc}(\Ga) )
[k]$) by letting $\mfgl_n( \msH_{1,\mbc}(\Ga) ) [k]$ act trivially if $k>0$, we define
the Verma module $M(\la)$ by \[ M(\la) = \mfU \mfgl_n( \msH_{1,\mbc}(\Ga) )
\ot_{\mfU\mfgl_n( \msH_{1,\mbc}(\Ga) )[\ge 0]} \C_{\la}. \] \end{definition}

The following lemma and definition are quite standard.

\begin{lemma}\label{irr}
The Verma module $M(\la)$ has a unique irreducible quotient which we denote by $L(\la)$.
\end{lemma}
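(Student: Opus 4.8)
The plan is to run the usual highest-weight machine for induced modules; the only input that is not completely formal is that submodules of $M(\la)$ are automatically homogeneous, and to obtain this I would first note that the $\Z$-grading on $\mfg := \mfgl_n(\msH_{1,\mbc}(\Ga))$ is inner. Write $\mfg = \mfg^- \oplus \mfg^0 \oplus \mfg^+$ for the decomposition into strictly negative, zero, and strictly positive degrees, and $\mfg^{\geq 0} = \mfg^0 \oplus \mfg^+$. Since $\mfg^-$ and $\mfg^{\geq 0}$ are graded Lie subalgebras with $\mfg = \mfg^- \oplus \mfg^{\geq 0}$, the PBW theorem gives a graded vector space isomorphism $M(\la) \cong \mfU(\mfg^-) \ot_{\C} \C_{\la}$. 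Hence $M(\la)$ is $\Z$-graded, lives in non-positive degrees, and its top piece is the line $M(\la)[0] = \C v_{\la}$, where $v_{\la} = 1 \ot 1$ is the image of the generator of $\C_\la$.

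Next I would exhibit a grading element. Set \[ D = n\,I\omega - \sum_{m=1}^{n} m\,E_{mm} \in \mfg^0. \] Using $[\omega,u] = -u$, $[\omega,v] = v$ and $[\omega,\ga] = 0$ one has $[I\omega, E_{ij}v^r u^s \ga] = (r-s)E_{ij}v^r u^s \ga$, while $[-\sum_m m E_{mm}, E_{ij}v^r u^s \ga] = (j-i)E_{ij}v^r u^s \ga$; adding these gives $[D,x] = \deg(x)\,x$ on every homogeneous $x$. Consequently $D$ acts on the graded piece $M(\la)[k]$ by the single scalar $\la(D) + k$ (indeed $\mathrm{ad}\,D = \deg$ extends to $\mfU(\mfg)$, and $D v_{\la} = \la(D) v_{\la}$). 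These scalars are pairwise distinct as $k$ varies, so $D$ acts semisimply with eigenspaces exactly the graded pieces. Therefore any $\mfg$-submodule $N$, being $D$-stable, is graded: $N = \oplus_k \big(N \cap M(\la)[k]\big)$.

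With homogeneity of submodules established, the conclusion is standard. A submodule is proper precisely when it omits $v_{\la}$, because $v_{\la}$ generates $M(\la)$. If $N \subsetneq M(\la)$ is proper then, being graded, $N \cap M(\la)[0]$ is a proper subspace of the line $\C v_{\la}$, hence $0$, so $N \subseteq \oplus_{k<0} M(\la)[k]$. It follows that the sum $N_{\max}$ of all proper submodules is still contained in $\oplus_{k<0} M(\la)[k]$, so it is itself proper and is the unique maximal proper submodule. Then $L(\la) := M(\la)/N_{\max}$ is irreducible, and every irreducible quotient of $M(\la)$ has the form $M(\la)/N'$ with $N'$ maximal proper, forcing $N' = N_{\max}$; this yields both existence and uniqueness.

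The step I expect to require the most care is the inner-grading claim, as the remainder is the boilerplate Verma-module argument; its entire content is the computation of $D$ above, which reduces homogeneity of submodules to the semisimplicity of a single operator with integer spectrum. I would also record in passing that $\mfg^0$ is abelian — for $1 \le i,j \le n$ the equation $(r-s)n + (j-i) = 0$ forces $i=j$ and $r=s$, so $\mfg^0$ consists of diagonal matrices with entries in the commutative degree-zero subalgebra generated by $\omega$ and $\Ga$ — which is exactly what makes $\C_{\la}$ a well-defined one-dimensional $\mfg^{\geq 0}$-module in the setup of Definition \ref{Verma}.
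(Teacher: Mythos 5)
Your proof is correct: the paper gives no argument for this lemma (it merely calls it ``quite standard''), and what you wrote is exactly the standard graded-Verma argument it has in mind, with the one non-formal point — that submodules of $M(\la)$ are homogeneous — properly justified by your grading element $D = nI\omega - \sum_{m}mE_{mm}$, whose adjoint action reproduces the paper's grading $\mathrm{deg}(E_{ij}v^ru^s\ga)=(r-s)n+j-i$ thanks to $[\omega,u]=-u$, $[\omega,v]=v$ at $t=1$. Your side remark that the degree-zero part is abelian (so that $\C_{\la}$ is a genuine one-dimensional module over the non-negative part) is also accurate and worth recording.
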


\begin{definition}\label{hwdef}
A $ \mfgl_n( \msH_{1,\mbc}(\Ga) ) $-module is called a highest weight module of highest
weight $\la \in \mfgl_n( \msH_{1,\mbc}(\Ga) ) [0]^*$  if it is generated by a vector $v$ on which $h\in \mfgl_n(
\msH_{1,\mbc}(\Ga) ) [0]$ acts by multiplication by $\la(h)$ and $ \mfgl_n(
\msH_{1,\mbc}(\Ga) ) [k]$ acts trivially if $k\in\Z_{> 0}$.  A vector with this last property is said to
be singular.
\end{definition}

The highest weight vector which generates the Verma module $M(\la)$ will be denoted
$v_{\la}$.

Given a parabolic subalgebra $\mfq$ of $\mfgl_n( \msH_{1,\mbc}(\Ga) )$ with $\mbb$ the
set of its first $nd$ characteristic polynomials, one can define similarly generalized
Verma modules $ M(\mfq,\la)$ by chosing $\la$ such that $\la(h)=0$ for any $h\in \mfgl_n(
\msH_{1,\mbc}(\Ga) ) [0,\mbb]$, since, in this case, $\la$ descends to $\mfq /
[\mfq,\mfq]$: see proposition \ref{der}.

The goal of this section is to study quasifinite irreducible highest weight modules, so
we have to introduce the next definition.

\begin{definition}\label{qfin}
A graded highest weight module $M$ over $\mfgl_n( \msH_{1,\mbc}(\Ga) )$, $M = \oplus_{k \in\Z} M[k]$, is said to be quasifinite if
$\mathrm{dim}_{\C} M[k] < \infty \, \forall\, k\in\Z$.
\end{definition}

In order to obtain below a condition equivalent to the quasifiniteness of $L(\la)$, we
need one more definition, as in \cite{KaRa}.

\begin{definition}\label{hdegd}
The Verma module $M(\la)$ is said to be highly degenerate if there exists a singular
vector $v \in M(\la)[-1]$ such that $v=Av_{\la}$ with $A \in \mfgl_n( \msH_{1,\mbc}(\Ga)
)[-1]$ and $qdet(A)\neq 0$.
\end{definition}

A few words of explanation are in order. The space $\mfgl_n( \msH_{1,\mbc}(\Ga) )[-1]$ is
spanned by $E_{i+1,i} \omega^r \mbe_l $ and by $E_{1n}  \omega^r u \mbe_l $, so the
entries of a matrix $A$ in $ \mfgl_n( \msH_{1,\mbc}(\Ga) )[-1]$ do not necessarily belong
to a commutative ring. By $qdet(A)$, we thus mean the quasi-determinant of $A$ (which, in this case, is, up to a sign, the product of the non-zero entries of $A$).

\begin{proposition}\label{hdegp}
The Verma module $M(\la)$ is highly degenerate if and only if $\la$ vanishes on $\mfgl_n(
\msH_{1,\mbc}(\Ga) )[0,\mbb]$ for some $nd$ monic (so non-zero) polynomials $\mbb =
(b^{i,l}(\omega))_{1\le i\le n}^{0\le l\le d-1}$.
\end{proposition}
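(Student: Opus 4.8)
The plan is to translate Definition~\ref{hdegd} into a condition on $\la$ and to recognise that condition as vanishing on the space $\mfgl_n(\msH_{1,\mbc}(\Ga))[0,\mbb]$ produced in Proposition~\ref{der}.

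First I would reduce singularity to a single degree. Since $M(\la)$ is induced from $\mfgl_n(\msH_{1,\mbc}(\Ga))[\ge 0]$ it is concentrated in non-positive degrees, so for $v=Av_{\la}\in M(\la)[-1]$ every $Y\in\mfgl_n(\msH_{1,\mbc}(\Ga))[k]$ with $k\ge 2$ sends $v$ into $M(\la)[k-1]=0$ automatically. Thus, by Definition~\ref{hwdef}, $v$ is singular precisely when it is killed by $\mfgl_n(\msH_{1,\mbc}(\Ga))[1]$; using $Yv_{\la}=0$ and $[Y,A]\in\mfgl_n(\msH_{1,\mbc}(\Ga))[0]$ one gets $Y(Av_{\la})=[Y,A]v_{\la}=\la([Y,A])v_{\la}$, whence
\[ Av_{\la}\ \text{is singular}\iff \la\big([\mfgl_n(\msH_{1,\mbc}(\Ga))[1],A]\big)=0. \]

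Next I would compute this bracket space. Writing a general element of $\mfgl_n(\msH_{1,\mbc}(\Ga))[-1]$ as $A=\sum_{i=1}^{n-1}E_{i+1,i}\big(\sum_l a_{i,l}(\omega)\mbe_l\big)+E_{1n}u\big(\sum_l a_{n,l}(\omega)\mbe_l\big)$, and recalling that $\mfgl_n(\msH_{1,\mbc}(\Ga))[1]$ is spanned by the $E_{i,i+1}\omega^r\mbe_m$ $(1\le i\le n-1)$ and the $E_{n1}v\omega^r\mbe_m$, the two bracket identities displayed in the proof of Proposition~\ref{der} (together with the vanishing of all remaining cross-brackets, which I would check) give
\[ [\mfgl_n(\msH_{1,\mbc}(\Ga))[1],A]=\mfgl_n(\msH_{1,\mbc}(\Ga))[0,\mbb_A], \]
where $\mbb_A=(b^{i,l})$ collects the monic generators of the ideals $(a_{i-1,l})\subset\C[\omega]$, with the cyclic convention $a_{0,l}:=a_{n,l}$ (so the subdiagonal entry $a_{i,l}$ yields $b^{i+1,l}$ and the corner entry $a_{n,l}$ yields $b^{1,l}$). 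The only delicate bookkeeping is the idempotent shift $u\mbe_l=\mbe_{l+1}u$ and the substitution $\omega\mapsto\omega\pm1$ in the corner bracket; these reproduce on the nose the corner summand $E_{11}uv(\omega+1)^r b^{1,l}(\omega+1)\mbe_{l+1}-E_{nn}vu\,\omega^r b^{1,l}(\omega)\mbe_l$ in the definition of $\mfgl_n(\msH_{1,\mbc}(\Ga))[0,\mbb]$.

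Finally I would interpret the condition $qdet(A)\neq 0$. Directly, or through the embedding $\iota$ of Section~\ref{matdiff}, the $nd$ entries $E_{i+1,i}\mbe_l,E_{1n}u\mbe_l$ of $A$ are the successive arrows of a single $nd$-cycle on the index set $\{(i,l)\}$, namely $(1,0)\to\cdots\to(n,0)\to(1,1)\to\cdots\to(n,d-1)\to(1,0)$; hence $A$ is a monomial (cyclic) matrix and its quasideterminant is, up to sign, $\prod_{i,l}a_{i,l}(\omega)$. Therefore $qdet(A)\neq 0$ is equivalent to all $nd$ polynomials $a_{i,l}$ being non-zero, i.e.\ to $\mbb_A$ consisting of $nd$ monic polynomials. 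Combining the three steps gives the forward implication: a highly degenerate $A$ forces $\la$ to vanish on $\mfgl_n(\msH_{1,\mbc}(\Ga))[0,\mbb_A]$ with $\mbb_A$ a family of $nd$ monic polynomials. For the converse I would run the construction backwards: given $nd$ monic $\mbb=(b^{i,l})$, set $a_{i,l}:=b^{i+1,l}$ and $a_{n,l}:=b^{1,l}$ in the formula for $A$; then $qdet(A)\neq 0$ and $[\mfgl_n(\msH_{1,\mbc}(\Ga))[1],A]=\mfgl_n(\msH_{1,\mbc}(\Ga))[0,\mbb]$, so the hypothesis $\la(\mfgl_n(\msH_{1,\mbc}(\Ga))[0,\mbb])=0$ makes $Av_{\la}$ a (non-zero, by the freeness of $M(\la)$ over the negative part) singular vector. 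The main obstacle is the second step: pinning down the bracket space as exactly $\mfgl_n(\msH_{1,\mbc}(\Ga))[0,\mbb_A]$ through the idempotent and degree shifts, while the reading of $qdet(A)\neq 0$ as the non-vanishing of all $nd$ entries is the point that makes the count of $nd$ polynomials come out correctly.
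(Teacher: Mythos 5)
Your argument is correct and is essentially the paper's own proof: the paper invokes the argument of proposition 4.1 in \cite{BKLY} together with the bracket computations from the proof of proposition \ref{der}, which is exactly what you unwind — reducing singularity of $Av_{\la}$ to $\la\big([\mfgl_n(\msH_{1,\mbc}(\Ga))[1],A]\big)=0$, identifying that bracket space with $\mfgl_n(\msH_{1,\mbc}(\Ga))[0,\mbb_A]$, and reading $qdet(A)\neq 0$ as the non-vanishing of the $nd$ polynomial entries. The only difference is that you make the cited \cite{BKLY} step and the reverse construction explicit, which the paper leaves implicit.
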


\begin{proof}
The same argument as in the proof of proposition 4.1 in \cite{BKLY} applies. The
polynomials $b^{i,l}(\omega)$ are related to the matrix $A$ in the following way. Since
$A \in  \mfgl_n( \msH_{t=1,\mbc}(\Ga) )[-1]$, it can be written as a linear combination of matrices of
the type $E_{i+1,i} b^{i+1,l}(w) \mbe_l $ for $1\le i\le n-1$ and $ E_{1n} b^{1,l}(w) u
\mbe_l $ with $0\le l\le d-1$. It follows from the proof of proposition \ref{der} that $\mfgl_n(
\msH_{1,\mbc}(\Ga)  )[0,\mbb]$ is spanned by $[B,A]$ for all $B \in \mfgl_n(
\msH_{1,\mbc}(\Ga)) [1]$.
\end{proof}

\begin{proposition}\cite{BKLY}
Given $\la \in \mfgl_n( \msH_{1,\mbc}(\Ga) )[0]^*$ as before, the following conditions
are equivalent: \begin{enumerate} \item $M(\la)$ is highly degenerate. \\
\item $L(\la)$ is quasi-finite. \\
\item $L(\la)$ is a quotient of a generalized Verma module $M(\mfq,\la)$ where all the
characteristic polynomials $\mbb = (b^{i,l}(\omega))_{1\le i\le n}^{0\le l\le d-1}$ of $\mfq$ are
non-zero.
\end{enumerate}
\end{proposition}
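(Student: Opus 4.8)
The plan is to follow the strategy of \cite{BKLY, KaRa} and prove the cycle of implications $(2)\Rightarrow(1)\Rightarrow(3)\Rightarrow(2)$, the heart of the matter being an analysis of the single graded piece $L(\la)[-1]$ that turns quasi-finiteness into the non-vanishing of the characteristic polynomials. Throughout I will use the explicit degree-$0$ bracket formulas already worked out in the proof of Proposition \ref{der}, together with Proposition \ref{hdegp} to translate ``highly degenerate'' into the statement that $\la$ annihilates $\mfgl_n(\msH_{1,\mbc}(\Ga))[0,\mbb]$ for some non-zero monic $\mbb=(b^{i,l}(\omega))$.

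First I would set up the degree $-1$ picture. Since, by Definition \ref{Verma}, $M(\la)=\mfU(\mfgl_n(\msH_{1,\mbc}(\Ga))[\le -1])\,v_{\la}$, we have $M(\la)[m]=0$ for $m>0$, $M(\la)[0]=\C v_{\la}$, and $M(\la)[-1]\cong\mfgl_n(\msH_{1,\mbc}(\Ga))[-1]$ by PBW. For $X\in\mfgl_n(\msH_{1,\mbc}(\Ga))[-1]$ the vector $Xv_{\la}$ is singular exactly when $\mfgl_n(\msH_{1,\mbc}(\Ga))[1]\cdot Xv_{\la}=0$ (higher positive degrees land in $M(\la)[\ge 1]=0$ automatically), i.e. when $\la([Y,X])=0$ for every $Y\in\mfgl_n(\msH_{1,\mbc}(\Ga))[1]$. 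Hence $L(\la)[-1]\cong\mfgl_n(\msH_{1,\mbc}(\Ga))[-1]/N$, where $N=\{X\mid \la([\mfgl_n(\msH_{1,\mbc}(\Ga))[1],X])=0\}$ is precisely the degree $-1$ part of the maximal proper submodule. In particular, if $L(\la)$ is quasi-finite then $\dim_{\C}L(\la)[-1]<\infty$, so $N$ has finite codimension; this gives the easy half of $(2)\Rightarrow(1)$.

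Next I would identify when $N$ has finite codimension. Writing $X=\sum_{i,l}E_{i+1,i}p_{i+1,l}(\omega)\mbe_l+\sum_l E_{1n}p_{1,l}(\omega)u\mbe_l$ and pairing against $Y=E_{i,i+1}\omega^r\mbe_{l'}$ and $Y=E_{n1}v(\omega+1)^r\mbe_{l'}$, the clean degree-$0$ formulas of Proposition \ref{der} show that the conditions $\la([Y,X])=0$ decouple slot by slot: for each pair $(i,l)$ they amount to the vanishing of one fixed functional $\phi_{i,l}$ (read off from the numbers $\la_{i,l,r}$, with the shift $\omega\mapsto\omega-r$, and $\omega\mapsto\omega+1$ in the affine slot $i=0$) on the whole ideal $(p_{i,l}(\omega))\subset\C[\omega]$. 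Thus the admissible $p_{i,l}$ form an ideal $I^{i,l}=\{p\mid \phi_{i,l}(\omega^r p)=0\ \forall r\ge 0\}$, and the rational-generating-function criterion of \cite{KaRa} shows $I^{i,l}$ has finite codimension iff it is non-zero iff $\sum_{r\ge 0}\phi_{i,l}(\omega^r)z^{-r-1}$ is the expansion of a rational function, equivalently iff $I^{i,l}=(b^{i,l}(\omega))$ for some non-zero monic $b^{i,l}$. Therefore $N$ has finite codimension iff all $nd$ polynomials $b^{i,l}(\omega)$ are non-zero; assembling $A=\sum_{i,l}E_{i+1,i}b^{i+1,l}(\omega)\mbe_l+\sum_l E_{1n}b^{1,l}(\omega)u\mbe_l$ then yields a singular vector $Av_{\la}$ with $qdet(A)=\pm\prod_{i,l}b^{i,l}(\omega)\ne 0$. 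This completes $(2)\Rightarrow(1)$, and reading the implications backwards gives $(1)\Rightarrow$ ``$N$ finite codimension''.

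Finally, for $(1)\Rightarrow(3)\Rightarrow(2)$ I would use the polynomials $\mbb$ from Proposition \ref{hdegp} to build the parabolic subalgebra $\mfq$ having these as its first $nd$ characteristic polynomials; it is non-degenerate by Proposition \ref{nondegp}. By Proposition \ref{der}, $\la$ vanishes on $\mfgl_n(\msH_{1,\mbc}(\Ga))[0,\mbb]=[\mfq,\mfq]\cap\mfgl_n(\msH_{1,\mbc}(\Ga))[0]$, so $\la$ descends to $\mfq/[\mfq,\mfq]$, the generalized Verma module $M(\mfq,\la)$ is defined, and $L(\la)$, as the unique irreducible quotient of the surjection $M(\la)\onto M(\mfq,\la)$, is a quotient of $M(\mfq,\la)$; this is $(3)$. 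For $(3)\Rightarrow(2)$, non-degeneracy forces $\mfq[k]$ to have finite codimension in $\mfgl_n(\msH_{1,\mbc}(\Ga))[k]$ for every $k$, so a graded complement $\mfn^-_{\mfq}$ sits in negative degrees with finite-dimensional pieces; hence $M(\mfq,\la)\cong\mfU(\mfn^-_{\mfq})\otimes\C_{\la}$ is quasi-finite, and so is its quotient $L(\la)$. The step I expect to be the main obstacle is the degree $-1$ analysis in the third paragraph: making rigorous that the finite-codimension condition on $N$ is equivalent to the non-vanishing of each $b^{i,l}$ requires the Kac--Radul rational-series criterion applied independently in the $nd$ slots, and the bookkeeping of the idempotents $\mbe_l$, the $\omega$-shifts, and the special affine slot $i=0$ (involving $E_{1n}u$) must be handled carefully to be sure the conditions genuinely decouple.
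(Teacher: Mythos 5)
Your proof is correct and follows essentially the same route as the paper: (1) and (3) are linked through Proposition \ref{hdegp} together with the parabolic/characteristic-polynomial machinery of Propositions \ref{nondegp} and \ref{der}, (3) implies (2) via the finite codimension of $\mfq[k]$ in each graded piece, and (2) implies (1) through the degree $-1$ singular-vector analysis. Your slot-by-slot treatment of $L(\la)[-1]$ just spells out (correctly) the step the paper compresses into the remark that some singular vector satisfies Definition \ref{hdegd}; note only that the appeal to the Kac--Radul rational-series criterion there is unnecessary, since any non-zero ideal of $\C[\omega]$ automatically has finite codimension.
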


\begin{proof}
Proposition \ref{hdegp} shows that (1) and (3) are equivalent.  Let us show that, if all
the polynomials $b^{i,l}(\omega)$ are non-zero, then $\mathrm{dim}_{\C} \big( \mfgl_n(
\msH_{1,\mbc}(\Ga) )[k] / \mfq[k] \big)$ is finite, hence $L(\la)$ is quasi-finite. Under
this assumption, it follows from the proof of lemma \ref{nondeg} that $b_k^{i,l}(w)$ are
non-zero for all $k\in\Z_{\le -1}, 1\le i\le n, 0\le l\le d-1$.  Recall that, for $k<0$,
we can write \[ \mfgl_n( \msH_{1,\mbc}(\Ga) )[k] = \sum_{\stackrel{s,l,i,j}{-sn+j-i=k}}
E_{ij} u^s \C[\omega] \mbe_l \text{ and } \mfq[k] = \sum_{\stackrel{s,l,i,j}{-sn+j-i=k}} E_{ij} u^s
\C[\omega] b^{i,l}_k(w)  \mbe_l. \] Our claim now follows from the observation that
$\mathrm{dim}_{\C} \big( \C[w] / (b^{i,l}_k(w)) \big) < \infty$.

Now suppose that $L(\la)$ is quasi-finite. Then $\mathrm{dim}_{\C} L(\la)[-1] < \infty$,
so, if $\wt{M}(\la)$ denotes the unique maximal submodule of $M(\la)$, then
$\wt{M}(\la)[-1] \neq \{ 0 \}$. All the vectors in $\wt{M}(\la)[-1] \neq \{ 0 \}$ are
singular and at least one satisfies the condition in definition \ref{hdegd}. Therefore,
$M(\la)$ is highly degenerate.
\end{proof}

In theorems \ref{irrepAB} and \ref{irrintC}, we stated a criterion in terms of certain
power series for the integrability of the simple quotients of Verma modules for $
\wh{\mfsl}_n(A),\wh{\mfsl}_n(B)$ and $\wh{\mfsl}_n(C)$. We now want to give a similar
criterion for the quasi-finiteness of $L(\la)$.  To achieve this, given $\la \in
\mfgl_n( \msH_{1,\mbc}(\Ga))[0]^*$ as before, set $d_{i,l,r} =  \la(E_{ii}w^r \mbe_l)$
for $1\le i\le n$ and $D_{i,l}(z) = \sum_{r=0}^{\infty} \frac{d_{i,l,r}}{r!} z^r$.
Recall that a quasipolynomial is a linear combination of functions of the form
$p(z)e^{az}$ where $p(z)$ is a polynomial and $a\in\C$.

\begin{theorem}\label{qfinL}
The module $L(\la)$ is quasi-finite if and only if there exist quasipolynomials
$\phi_{i,l}(z), 1\le i\le n, 0\le l\le d-1 $, such that \begin{equation*} D_{1,l}(z) =
\frac{\phi_{1,l}(z)}{1-e^{dz}}   \end{equation*}
 \begin{equation*} D_{i,l}(z) = \frac{\phi_{1,l}(z) + (1-e^{dz}) \phi_{i,l}(z)}{1-e^{dz}}
\mbox{ if } 2\le i\le n.   \end{equation*}
\end{theorem}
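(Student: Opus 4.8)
The plan is to reduce the statement, via the equivalence proved just above (that $L(\la)$ is quasi-finite iff $M(\la)$ is highly degenerate) together with Proposition \ref{hdegp}, to the existence of monic polynomials $\mbb=(b^{i,l}(\omega))_{1\le i\le n}^{0\le l\le d-1}$ on which $\la$ vanishes through $\mfgl_n(\msH_{1,\mbc}(\Ga))[0,\mbb]$, and then to translate that vanishing into generating-series identities in the manner of Kac--Radul and \cite{BKLY}. First I would introduce, for each $1\le i\le n$ and $0\le l\le d-1$, the auxiliary series $G_{i,l}(z)=\la(E_{ii}e^{z\omega}\mbe_l)=\sum_{r\ge0}\frac{\la(E_{ii}\omega^r\mbe_l)}{r!}z^r$, and record two facts used throughout: the operative identity $\la(E_{ii}Q(\omega)e^{z\omega}\mbe_l)=Q(\partial_z)G_{i,l}(z)$ for any polynomial $Q$ (since $\omega^m e^{z\omega}=\partial_z^m e^{z\omega}$); and that, writing $w=uv$, the relations $uv\,\mbe_l=(\wt{c}_{l-1}-\omega)\mbe_l$ and $vu\,\mbe_l=(1+\wt{c}_{l}-\omega)\mbe_l$ give $D_{i,l}(z)=e^{\wt{c}_{l-1}z}G_{i,l}(-z)$. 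Because multiplication by $e^{\wt{c}_{l-1}z}$ and the substitution $z\mapsto-z$ preserve both the class of quasipolynomials and divisibility by $1-e^{\pm dz}$, it suffices to phrase everything in terms of the $G_{i,l}$.

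Next I would dispatch the Cartan relations coming from $H_i=E_{ii}-E_{i+1,i+1}$. The vanishing of $\la$ on $H_i\,\omega^r b^{i+1,l}(\omega)\mbe_l$ for all $r\ge0$ says exactly that $b^{i+1,l}(\partial_z)\bigl(G_{i,l}(z)-G_{i+1,l}(z)\bigr)=0$; since a formal series is killed by a nonzero constant-coefficient differential operator precisely when it is a quasipolynomial, the existence of a monic $b^{i+1,l}$ is equivalent to $G_{i,l}-G_{i+1,l}$ being a quasipolynomial. Telescoping over $i$ shows $G_{i,l}-G_{1,l}$ is a quasipolynomial for every $i$, which after the change of variables above is precisely the second family of equations, $D_{i,l}-D_{1,l}=\phi_{i,l}$.

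The heart of the argument is the wrap-around term $E_{11}uv(\omega+1)^r b^{1,l}(\omega+1)\mbe_{l+1}-E_{nn}vu\,\omega^r b^{1,l}(\omega)\mbe_l$ of $\mfgl_n(\msH_{1,\mbc}(\Ga))[0,\mbb]$ (Proposition \ref{der}). Substituting $uv\,\mbe_{l+1}=(\wt{c}_l-\omega)\mbe_{l+1}$ and $vu\,\mbe_l=(1+\wt{c}_l-\omega)\mbe_l$, forming the generating series in $r$ (so $(\omega+1)^r$ yields $e^{z(\omega+1)}$ and $\omega^r$ yields $e^{z\omega}$), and using the operator identity $\hat b(\partial_z)=e^{z}\tilde b(\partial_z)e^{-z}$ where $\tilde b(\omega)=(\wt{c}_l-\omega)b^{1,l}(\omega+1)$ and $\hat b(\omega)=(1+\wt{c}_l-\omega)b^{1,l}(\omega)=\tilde b(\omega-1)$, the vanishing of $\la$ becomes $\tilde b(\partial_z)\bigl(G_{1,l+1}(z)-e^{-z}G_{n,l}(z)\bigr)=0$. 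Hence a monic $b^{1,l}$ exists iff $G_{1,l+1}-e^{-z}G_{n,l}$ is a quasipolynomial; combined with the previous step this gives $G_{1,l+1}-e^{-z}G_{1,l}=\psi_l$ a quasipolynomial for each $l$. Iterating cyclically around $l=0,1,\dots,d-1$ (indices mod $d$, so $G_{1,d}=G_{1,0}$) telescopes to $(1-e^{-dz})G_{1,l}=\sum_{m}e^{-(d-1-m)z}\psi_m$, a quasipolynomial, which after $z\mapsto-z$ and the exponential rescaling reads $(1-e^{dz})D_{1,l}=\phi_{1,l}$ — the first equation. The converse runs the same computation backwards: from the two displayed families one recovers quasipolynomiality of each $G_{i,l}-G_{i+1,l}$ and of each $G_{1,l+1}-e^{-z}G_{n,l}$, chooses monic annihilating polynomials (absorbing the harmless factor $\wt{c}_l-\omega$ by enlarging the annihilator), and exhibits $\la$ as vanishing on some $\mfgl_n(\msH_{1,\mbc}(\Ga))[0,\mbb]$, so $M(\la)$ is highly degenerate and $L(\la)$ quasi-finite.

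The step I expect to be the main obstacle is precisely this cyclic combination: one must track carefully the unit shift $\omega\mapsto\omega+1$ created by the Cherednik commutator $vu-uv$ and the factor $e^{-z}$ it induces, and check that traversing the $d$ idempotents $\mbe_0,\dots,\mbe_{d-1}$ once accumulates exactly $e^{-dz}$, producing the denominator $1-e^{dz}$. A related delicate point, needed to make the converse rigorous, is that all identities live in formal power series (so $1-e^{dz}$ is a non-unit vanishing only at $z=0$): one must ensure that the per-$l$ quasipolynomiality required to build the $b^{1,l}$ is genuinely recovered from the stated equations — and not merely from their cyclic sum — which is where the coupling between the within-block equations $D_{i,l}-D_{1,l}$ and the single shared numerator $\phi_{1,l}$ does the work, exactly as in the matrix computation of \cite{BKLY}.
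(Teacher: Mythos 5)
Your necessity argument is correct and is, in substance, the paper's own proof in a different packaging: where you encode the vanishing of $\la$ on $\mfgl_n(\msH_{1,\mbc}(\Ga))[0,\mbb]$ as annihilation of the series $G_{i,l}$ by monic constant-coefficient operators $b(\partial_z)$, the paper writes the same conditions as linear recurrences on the coefficients $d_{i,l,r}$; it rewrites the wrap-around generator exactly as you do (via $uv\,\mbe_{l+1}=(\wt{c}_l-\omega)\mbe_{l+1}$, $vu\,\mbe_l=(1+\wt{c}_l-\omega)\mbe_l$), deduces that $e^{z}D_{1,l+1}(z)-D_{1,l}(z)$ is a quasipolynomial for each $l$, and then telescopes cyclically over the $d$ idempotents to get $(1-e^{dz})D_{1,l}(z)$, just as you do. (Your normalization $D_{i,l}(z)=e^{\wt{c}_{l-1}z}G_{i,l}(-z)$, i.e. reading $w$ as $uv$ rather than $\omega$, is immaterial: these transformations preserve quasipolynomiality and divisibility by $1-e^{dz}$ up to units.)

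The gap is in your converse, and it is exactly at the point you flag but then wave away. The two displayed families are decoupled in $l$: they say only that each $(1-e^{dz})D_{1,l}$ and each $D_{i,l}-D_{1,l}$ is a quasipolynomial. To exhibit $\la$ as vanishing on some $\mfgl_n(\msH_{1,\mbc}(\Ga))[0,\mbb]$ with all $b^{i,l}$ monic you must, for every $l$, kill the wrap-around generators, i.e. you need the coupled statement that $G_{1,l+1}-e^{-z}G_{n,l}$ (equivalently $e^{z}D_{1,l+1}-D_{1,l}$) is a quasipolynomial for each $l$, and for $d\ge 2$ this does not follow from the decoupled conditions. Concretely, take $d\ge 2$, set $D_{i,0}(z)=z/(1-e^{dz})$ for all $i$ and $D_{i,l}=0$ for $l\neq 0$: the stated conditions hold (with $\phi_{1,0}=z$ and all other $\phi$'s zero), yet $e^{z}D_{1,1}-D_{1,0}=-z/(1-e^{dz})$ is not a quasipolynomial (its analytic continuation is not entire), so no nonzero $b^{1,0}$ annihilates the corresponding wrap-around family and your construction of a highly degenerate singular vector breaks down. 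The ``shared numerator $\phi_{1,l}$'' couples the indices $i$ within a fixed $l$, not $l$ with $l+1$; the argument of \cite{BKLY} you appeal to closes the cycle in a single step only because there $d=1$, where the decoupled and coupled forms coincide. What running your computation backwards actually proves is sufficiency of the stronger, coupled hypotheses ($e^{z}D_{1,l+1}-D_{1,l}$ and $D_{i,l}-D_{1,l}$ quasipolynomial for all $i,l$) --- which is also all that the paper's own computation (carried out only in the necessity direction, with the rest referred to \cite{BKLY}) delivers; your proposal as written does not bridge the difference between that coupled criterion and the per-$l$ statement of the theorem, and the counterexample above shows no purely formal bridge exists.
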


\begin{proof}
The proof is similar to the proof of theorem 4.1 in \cite{BKLY}, using the description of
$\mfgl_n( \msH_{1,\mbc}(\Ga)) [0,\mbb]$ given just before proposition \ref{der}. Let us
explain the differences.  Writing $b^{i,l}(\omega)$ as $\omega^{m_{i,l}} +
f_{i,l,m_{i,l}-1}\omega^{m_{i,l}-1} + \cdots + f_{i,l,0}$, we obtain the equations
$\sum_{r=0}^{m_{i,l}} f_{i,l,r} F_{i,l,r+\wt{r}} =0 $ for $1\le i\le n,
\wt{r}=0,1,\ldots$ where $F_{i,l,r} = d_{i,l,r} - d_{i-1,l,r}$ for $2\le i\le n$ and $f_{i,l,m_{i,l}}=1$. To
express $F_{1,l,r}$ in terms of the $d_{i,l,r}$ we write \[ E_{11} uv(\omega+1)^{r} b_{-1}^{1,l}(\omega+1)\mbe_{l+1} - E_{nn}
vu \omega^{r} b_{-1}^{1,l}(\omega)\mbe_{l} \] as
\begin{equation*} \begin{split} - E_{11} (\omega+1)^{r+1} b_{-1}^{1,l}(\omega+1)\mbe_{l+1} +
(\wt{c}_{l}+1) E_{11} (\omega+1)^r b_{-1}^{1,l}(\omega+1)\mbe_{l+1} \\
+ E_{nn} \omega^{r+1} b_{-1}^{1,l}(\omega)\mbe_{l} - (\wt{c}_{l}+1) E_{nn} \omega^{r} b_{-1}^{1,l}(\omega)\mbe_{l} 
\end{split}
\end{equation*}

We thus see that \begin{equation*}  \begin{split} F_{1,l,r}  =  d_{n,l,r+1} - (\tilde{c}_l+1) d_{n,l,r} -
\sum_{j=0}^{r+1} \left( \begin{array}{c} r+1 \\ j  \end{array} \right) d_{1,l+1,j} 
   + (\wt{c}_{l}+1) \sum_{j=0}^{r} \left( \begin{array}{c} r \\ j  \end{array}
\right)  d_{1,l+1,j} . \end{split} \end{equation*}

Setting $F_{i,l}(z) = \sum_{r=0}^{\infty} F_{i,l,r} \frac{z^r}{r!} $ for $1\le i\le n$,
we can conclude as in \cite{BKLY} that $F_{i,l}(z)$ is a quasipolynomial. For $2\le i\le
n$, we can write $F_{i,l}(z) = D_{i,l}(z) - D_{i-1,l}(z)$, and for $i=1$, we have \[
F_{1,l}(z) = D_{n,l}'(z) - (\wt{c}_{l}+1) D_{n,l}(z) -  (e^{z} D_{1,l+1})'(z) +
(\wt{c}_{l}+1) e^{z} D_{1,l+1}(z) .  \] Here, $D_{i,l}'(z)$ is the derivative of
$D_{i,l}(z)$.  This implies that $ D_{i,l}'(z) - D_{i-1,l}'(z)$ is also a
quasi-polynomial (for $2\le i\le n$) and hence so is \[ (D_{1,l}(z) - e^{z} D_{1,l+1}(z))' - (\wt{c}_{l}+1) ( D_{1,l}(z)-e^{z}
D_{1,l+1}(z) ) .  \] Consequently, $e^{z} D_{1,l+1}(z) - D_{1,l}(z)$ is a
quasi-polynomial, and thus so is $(1-e^{dz}) D_{1,l}(z)$.
\end{proof}

It is possible to construct quasi-finite representations of $\mfgl_n(
\msH_{1,\mbc}(\Ga))$ as tensor products of certain modules. This is where the embeddings
$\varphi_a^{[m]}$ come into play. Unfortunately, they are not necessarily irreducible.

First, we need to construct irreducible representations of $ \ol{\mfgl}_{\infty}(R_m) $
using a standard procedure. An element $\la \in \ol{\mfgl}_{\infty}(R_m)[0]^* $ is
determined by $\la_k^{(j)} = \la(E_{kk} t^j), \, k\in\Z, j=0,\ldots,m$, which we call its
labels, following the terminology in \cite{BKLY}. Using induction from the subalgebra of
upper-triangular matrices and its one-dimensional representation determined by such a
$\la$, we construct a Verma module for $ \ol{\mfgl}_{\infty}(R_m) $ and this Verma module
has a unique irreducible highest weight quotient $L(m,\la)$.

\begin{proposition}[\cite{BKLY} proposition 4.4]
The irreducible $ \ol{\mfgl}_{\infty}(R_m) $-module $L(m,\la)$ is quasi-finite if and
only if for each $j=0,\ldots,m$ all but finitely many of the $\la_k^{(j)} -
\la_{k+1}^{(j)}$ are zero.
\end{proposition}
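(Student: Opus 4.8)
The plan is to reduce quasi-finiteness to the finite-dimensionality of the single graded piece $L(m,\la)[-1]$, and then to compute that dimension explicitly via the contravariant pairing between the degree $+1$ and degree $-1$ components.

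First I would invoke the Kac--Radul reduction, which is the mechanism behind \cite{BKLY}~proposition 4.4. The key structural fact is that $\bigoplus_{k<0}\ol{\mfgl}_{\infty}(R_m)[k]$ is generated as a Lie algebra by its degree $-1$ part: indeed $E_{k+1,k-1}t^{p}=[\,E_{k+1,k}t^{p},E_{k,k-1}\,]$, and iterating fills every lower diagonal with every allowed power of $t$. Hence $U\big(\ol{\mfgl}_{\infty}(R_m)[<0]\big)$ is generated by $\ol{\mfgl}_{\infty}(R_m)[-1]$, and $L(m,\la)[-k]$ is spanned by $k$-fold products of degree $-1$ elements applied to $v_{\la}$. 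Following \cite{KaRa}, if $\dim_{\C}L(m,\la)[-1]<\infty$ one forms the parabolic subalgebra $\mfp$ generated by the non-negative part together with the cofinite subspace $\{X\in\ol{\mfgl}_{\infty}(R_m)[-1]:Xv_{\la}=0\}$, checks that each negative piece $\mfp[k]$ is cofinite in $\ol{\mfgl}_{\infty}(R_m)[k]$, and realizes $L(m,\la)$ as a quotient of the generalized Verma module induced from $\mfp$; quasi-finiteness follows. The converse is trivial. I expect this reduction to be the only genuinely delicate step; the remainder is a direct calculation.

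Second, I would identify $\dim_{\C}L(m,\la)[-1]$ with the rank of a pairing. Using the transpose anti-involution $E_{ij}t^{p}\mapsto E_{ji}t^{p}$, which interchanges the degrees $\pm 1$ and fixes $\ol{\mfgl}_{\infty}(R_m)[0]$ pointwise, the contravariant (Shapovalov) form on $L(m,\la)[-1]$ is computed by $\langle Xv_{\la},X'v_{\la}\rangle=\la([\sigma(X),X'])$, so that $\dim_{\C}L(m,\la)[-1]=\mathrm{rank}\,B$ where
\[
B\colon\ \ol{\mfgl}_{\infty}(R_m)[1]\times\ol{\mfgl}_{\infty}(R_m)[-1]\lra\C,\qquad (Y,X)\longmapsto\la([Y,X]).
\]

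Finally I would evaluate $B$ on the bases $Y=E_{k,k+1}t^{j}$, $X=E_{l+1,l}t^{j'}$. From $[E_{k,k+1},E_{l+1,l}]=\delta_{k,l}(E_{kk}-E_{k+1,k+1})$ one gets $[E_{k,k+1}t^{j},E_{l+1,l}t^{j'}]=\delta_{k,l}(E_{kk}-E_{k+1,k+1})t^{j+j'}$, which is zero as soon as $j+j'>m$; therefore
\[
B\big(E_{k,k+1}t^{j},E_{l+1,l}t^{j'}\big)=\delta_{k,l}\big(\la_{k}^{(j+j')}-\la_{k+1}^{(j+j')}\big)\qquad(j+j'\le m).
\]
Thus $B$ is block-diagonal over $k\in\Z$, the $k$-th block $B_{k}$ being the $(m+1)\times(m+1)$ Hankel matrix with entries $\la_{k}^{(j+j')}-\la_{k+1}^{(j+j')}$. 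Reading off its first row ($j=0$), the block $B_{k}$ vanishes precisely when $\la_{k}^{(p)}-\la_{k+1}^{(p)}=0$ for all $p=0,\dots,m$. Consequently $\mathrm{rank}\,B=\sum_{k}\mathrm{rank}\,B_{k}$ is finite if and only if all but finitely many blocks vanish, which holds exactly when, for each $j=0,\dots,m$, all but finitely many of the differences $\la_{k}^{(j)}-\la_{k+1}^{(j)}$ vanish. Combined with the first step, this yields the proposition.
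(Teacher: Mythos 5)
The paper does not actually prove this proposition: it is quoted from \cite{BKLY} (Proposition 4.4), so there is no in-text argument to compare with. Your proposal is the standard Kac--Radul/BKLY argument, and it is the same machinery the paper itself deploys for $\mfgl_n(\msH_{t=1,\mbc}(\Ga))$ in propositions \ref{nondegp}, \ref{der}, \ref{hdegp} and the subsequent equivalence of quasi-finiteness with high degeneracy: reduce quasi-finiteness to $\dim_{\C} L(m,\la)[-1]<\infty$, then compute that dimension as the rank of the pairing $(Y,X)\mapsto \la([Y,X])$, which is block-diagonal over $k\in\Z$ with truncated Hankel blocks in the label differences; your evaluation of the blocks and the resulting criterion are correct. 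The only step carrying real weight is the reduction itself, namely that a finite-codimension annihilator in degree $-1$ forces every negative graded piece of the generated parabolic to have finite codimension (the analogue of proposition \ref{nondegp}); you assert this with a citation to \cite{KaRa} rather than proving it, which matches the level of detail of the source, but be aware that the cited lemmas are formulated for parabolics containing a suitably nondegenerate degree $-1$ element, so one should either exhibit such an element inside the annihilator (possible: a subspace of finite codimension cannot lie in the countable union of the proper subspaces where some diagonal entry degenerates) or rerun the cofiniteness induction directly. A last, cosmetic caveat, shared with the paper's own convention that ``$\la\in\ol{\mfgl}_{\infty}(R_m)[0]^*$ is determined by its labels'': the pieces $\ol{\mfgl}_{\infty}(R_m)[\pm 1]$ are completed (product) spaces, so the block-rank computation implicitly evaluates $\la$ only on finitely supported diagonal elements; this is the convention of \cite{BKLY} and is not a defect of your argument relative to the paper.
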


Let $\mbm=(m_1,\ldots,m_N) \in \Z_{\ge 0}^{\oplus N}$ and $\mathbf{\la}=(\la(1),
\ldots, \la(N))$ with $\la(i) \in \ol{\mfgl}_{\infty}(R_{m_i})[0]^* $ such that $L(m_i,\la(i)) $ is quasi-finite. We can form the tensor
product $L(\mbm,\mathbf{\la}) = \bigotimes_{i=1}^N L(m_i,\la(i)) $, which is an
irreducible quasi-finite representation of $\ol{\mfgl}_{\infty}[\mbm] = \oplus_{i=1}^N
\ol{\mfgl}_{\infty}(R_{m_i})$. By pulling it back via the map $\varphi_{\mba}^{[\mbm]} =
\oplus_{i=1}^N \varphi_{a_i}^{[m_i]}: \mfgl_n(\msH_{t=1,\mbc}(\Ga)) \lra
\ol{\mfgl}_{\infty}[\mbm]$ for $\mba=(a_1,\ldots,a_N) \in \C^N$, we obtain a
representation of $\mfgl_n(\msH_{t=1,\mbc}(\Ga))$ which we denote
$\msL_{\mba}(\mbm,\mathbf{\la})$. 

Theorem 4.2 in \cite{BKLY} does not hold for $\mfgl_n(\msH_{t=1,\mbc}(\Ga))$, so we cannot deduce that the representation $\msL_{\mba}(\mbm,\mathbf{\la})$ is necessarily
irreducible. Let us discuss what is the difference here. Theorem 4.2 in \textit{loc.
cit.} states that pulling back a quasifinite representation of $\ol{\mfgl}_{\infty}[\mbm]$ to
$\mfgl_n(\mbA_1 \rtimes \Ga)$ via $\varphi_{\mba}^{[\mbm]}$ gives a representation which
has the same submodules. (The proof in the case $\Ga=\{ 1 \}$ extends to any $d>1$.) The
main ideas of the proof are the following (see also \cite{KaRa}).  First, we have to introduce a holomorphic
enlargement of $\mfgl_n(\mbA_1 \rtimes \Ga)$, as at the end of section \ref{matdiff}: it is the Lie algebra $\mfgl_n(\mbA_1^{\mcO}
\rtimes \Ga)$ spanned by  $E_{ij} u^s f(\omega)\mbe_l$ where $f(\omega)$ is an entire
function of $\omega$, the bracket of $\mfgl_n(\mbA_1 \rtimes \Ga)$ extending to
$\mfgl_n(\mbA_1^{\mcO} \rtimes \Ga)$ naturally. Secondly, the formula for the embedding
$\varphi_{\mba}^{[\mbm]}$ (when $a_i \neq a_j$ for $i\neq j$) can be used to obtain a map
$\varphi_{\mba}^{[\mbm],\mcO}: \mfgl_n(\mbA_1^{\mcO} \rtimes \Ga ) \lra
\ol{\mfgl}_{\infty}[\mbm]$, which is onto, but not necessarily into. The last step is to show that,
if $V$ is a quasi-finite module over $\mfgl_n(\mbA_1 \rtimes \Ga)$, then, by continuity,
we can make $\mfgl_n(\mbA_1^{\mcO} \rtimes \Ga)[k]$ act on $V$ if $k\neq 0$. This
involves computing an upper bound on the norm of certain operators.

The first and third step work also for $\msH_{t=1,\mbc}(\Ga)$, but not the second one.
We consider the algebra $\msH_{t=1,\mbc}^{\mcO}(\Ga)$ which is spanned by elements of
the form $v^r f(\omega)\mbe_l$ and $u^s f(\omega)\mbe_l$ where $f(\omega)$ is an entire
function of $\omega$ and the multiplication is given by (in the case $r\ge s$) \begin{equation*} v^r
f(\omega)\mbe_{l_1} u^s g(\omega)\mbe_{l_2} = \delta_{l_1-s,l_2} v^{r-s} \left(
\prod_{k=1}^{s} ( -\omega + c_{l_2+s-k} + 1+s- k) \right) f(\omega - s)
g(\omega)\mbe_{l_2}  \end{equation*}  We have also a map $\varphi_{\mba}^{[\mbm],\mcO}:
\mfgl_n(\msH_{t=1,\mbc}^{\mcO}(\Ga)) \lra \ol{\mfgl}_{\infty}[\mbm]$, but it is not onto:
for instance, if $a=0=m = c_l$ for $l=0,\ldots, d-1$, then $\varphi_a^{[m]}(E_{ij} v
f(\omega) \mbe_k) = \sum_{l \in \Z} (ld+k) f(-ld-k) E_{(ld+k-1)n + i - 1, (ld+k)n + j -
1}$. Therefore, in the image, the coefficient of $E_{-n + i - 1, j - 1}$ is always zero,
independently of $f(\omega)$. At least, we have the following result.

\begin{proposition}
Assume that $a_i - a_j \not\in\Z$ for $1\le i\neq j\le N$ and $\wt{c}_k + a_i \not\in \Z$ for all $1\le i \le N, \, 0\le k \le d-1$. Then the map $\varphi_{\mba}^{[\mbm],\mcO}:
\mfgl_n( \msH_{t=1,\mbc}^{\mcO}(\Ga) ) \lra \ol{\mfgl}_{\infty}[\mbm]$ is onto.
\end{proposition}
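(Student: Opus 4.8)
The plan is to use that $\varphi_{\mba}^{[\mbm],\mcO}$ is a graded map for the principal gradings (with $\deg E_{ij}=j-i$ on the target), so it suffices to prove surjectivity degree by degree. Fix a degree $D\in\Z$; the corresponding graded piece of $\ol{\mfgl}_{\infty}[\mbm]=\oplus_{\alpha=1}^N\ol{\mfgl}_{\infty}(R_{m_\alpha})$ is the direct sum, over $\alpha$, of the $D$-th diagonal of $\ol{M}_{\infty}(R_{m_\alpha})$, each such diagonal being an arbitrary $\Z$-indexed family of jets in $R_{m_\alpha}=\C[t]/(t^{m_\alpha+1})$. First I would read off, from the explicit formulas for $\varphi_a^{[m]}$ on $u^s\omega^r\mbe_k$ and $v^s\omega^r\mbe_k$ together with the holomorphic multiplication rule, the images of the generators $E_{ij}u^s f(\omega)\mbe_k$ and $E_{ij}v^s f(\omega)\mbe_k$ with $f$ entire: in the summand $\alpha$ these are supported on the diagonal $D=sn+i-j$ (resp. $-sn+i-j$), and the entry landing in column $(ld+k)n+j-1$ is the $m_\alpha$-jet of $f(-ld-k-a_\alpha-t)$ in the $u$-case, and that jet multiplied by the product $\prod_{p=0}^{s-1}(ld+k-p+a_\alpha+t+\wt{c}_{k-p-1})$ in the $v$-case.

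The combinatorial core is a one-slot-per-position bookkeeping: for a prescribed matrix position on a fixed diagonal $D$ there is exactly one choice of Chevalley indices $(i,j)$, one residue $k$, one power (namely $u^s$ with $s\ge 0$ when $D\ge i-j$, otherwise $v^s$ with $s\ge 1$), and one $l\in\Z$ producing an entry there. Consequently, fixing the residue class of the column modulo $dn$ — equivalently fixing $(j,k)$, hence $(i,s)$ — reduces the problem, \emph{in all $N$ summands simultaneously}, to choosing a single entire function $f=f_{j,k}$ whose $m_\alpha$-jet at the point $-ld-k-a_\alpha$ is prescribed for every $l\in\Z$ and every $\alpha$, after dividing out the product factor.

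Here the two hypotheses enter. The assumption $\wt{c}_k+a_\alpha\notin\Z$ guarantees that each factor $ld+k-p+a_\alpha+\wt{c}_{k-p-1}$ is a nonzero scalar, so the product factor has nonzero constant term and is a unit of $R_{m_\alpha}$; dividing by it converts a prescribed target jet into a prescribed jet of $f_{j,k}$. The assumption $a_\alpha-a_\beta\notin\Z$ for $\alpha\neq\beta$ guarantees that the interpolation points $-ld-k-a_\alpha$, over all $l\in\Z$ and all $\alpha$, are pairwise distinct; as $l\to\pm\infty$ they form a discrete closed subset of $\C$. The surjectivity of the jet-evaluation map $\mcO(\C)\lra\prod(\text{finite jets})$ on a discrete closed set (Weierstrass--Mittag-Leffler interpolation) then produces an entire $f_{j,k}$ realizing all the required jets at once in every summand. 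Summing the resulting generators over the finitely many residue classes $(j,k)$ and the finitely many nonzero diagonals of the target yields a preimage in $\mfgl_n(\msH_{t=1,\mbc}^{\mcO}(\Ga))$, which proves that $\varphi_{\mba}^{[\mbm],\mcO}$ is onto.

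The step requiring the most care — and the main potential obstacle — is the $v$-branch: I must verify the exact product factor appearing in $\varphi_a^{[m]}(E_{ij}v^s f(\omega)\mbe_k)$ from the holomorphic multiplication rule, confirm that its constant term is precisely the displayed product of the terms $ld+k-p+a_\alpha+\wt{c}_{k-p-1}$, and check that its being a unit of $R_{m_\alpha}$ is governed \emph{exactly} by $\wt{c}_k+a_\alpha\notin\Z$. Parallel to this, I must ensure the one-slot-per-position count is correct, so that the interpolation problems attached to distinct residues $(j,k)$ are genuinely independent and can be solved separately; once this independence is in place, the only analytic input is the standard entire-interpolation theorem, and the proof closes.
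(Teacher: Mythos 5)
Your proof is correct and follows essentially the same route as the paper's: both reduce surjectivity to prescribing the $m_\alpha$-jets of a single entire function at the discrete set of points $a_\alpha+ld$ (pairwise distinct because $a_\alpha-a_\beta\notin\Z$), use $\wt{c}_k+a_\alpha\notin\Z$ to make the factor $\prod_{p=0}^{s-1}(ld+k-p+a_\alpha+t+\wt{c}_{k-p-1})$ nonvanishing at those points, and then invoke interpolation by entire functions with prescribed derivatives up to order $m_\alpha$ on a discrete set. Your remark that this factor is a unit of $R_{m_\alpha}$, so the target jets can simply be divided by it, is just a cleaner packaging of the triangular linear system the paper solves at each point, and your uniform degree-by-degree treatment of the $u$- and $v$-branches replaces the paper's appeal to \cite{BKLY,KaRa} for the lower-triangular part.
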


\begin{proof} Decompose $\ol{\mfgl}_{\infty} $ as $\ol{\mfgl}_{\infty} = \ol{\mfn}^{\, -}_{\infty} \oplus \ol{\mfh}_{\infty} \oplus \ol{\mfn}^{\, +}_{\infty}$, where $\ol{\mfh}_{\infty}$ is the Lie subalgebra of all the diagonal blocks of size $n$ (with one having a corner at the $(0,0)$-entry), and $\ol{\mfn}^{\pm}_{\infty}$ are the complements of $\ol{\mfh}_{\infty}$ consisting of strickly upper and lower triangular matrices.  That $\varphi_{\mba}^{[\mbm],\mcO}$ is onto the subspace $\ol{\mfn}^{\, -}_{\infty}$ when restricted to the subspace spanned by the elements $E_{ij}u^s f(\omega) \mbe_k$ with $s\in\Z_{\ge 0}, 1\le i,j \le n, 0\le k\le d-1$ follows from \cite{BKLY,KaRa}, so let us focus instead on $\mfgl_n( \msH_{t=1,\mbc}^{\mcO}(\Ga) )[> 0]$.  Explicitly, using the Taylor formula for the expansion of a funtion of $t$ around $t=0$ and \eqref{iotav}, $ \varphi_{a_i}^{[m_i]} $ is given by \begin{equation} \varphi_{a_i}^{[m_i]} (E_{ij} v^s f(\omega)\mbe_k ) = \sum_{l \in \Z} \sum_{b=0}^{m_i} \frac{g^{(b)}(a_i + ld)}{b!} t^b  E_{(ld+k-s)n + i - 1, (ld+k)n + j - 1} \label{o1} \end{equation} if we set $g(t) = \left( \prod_{p=0}^{s-1}( k-p+t + \wt{c}_{k-p-1} ) \right) f(-k-t)$. As in proposition 3.1 in \cite{KaRa}, we can use the fact that, for every discrete set of points in $\C$, there is a holomorphic function on $\C$ with prescribed values of its first $m_i$ derivatives at each points of such a set. Combining this with our assumption that $a_i - a_j \not\in\Z$ for $1\le i\neq j\le N$, we deduce that, given a matrix $E = \oplus_{i=1}^N E_i$ in $\ol{\mfgl}_{\infty}[\mbm]$, there exists an entire function $g(t)$ such that the right-hand side of \eqref{o1} is equal to $E_i$ for all $i=1,\ldots,N$. To complete the proof, we have to find an entire function $f(\omega)$ such that, if we set $\wt{g}(t) = \left( \prod_{p=0}^{s-1}( k-p + t + \wt{c}_{k-p-1} ) \right) f(-k-t)$, then $g^{(b)}(a_i+ld) = \wt{g}^{(b)}(a_i+ld)$ for $1\le i\le N$, $0\le b\le m_i$ all $l\in\Z$. Set $P(t) = \prod_{p=0}^{s-1}( k-p + t + \wt{c}_{k-p-1} ) $, so that \[ \wt{g}^{(b)}(t) = \sum_{a=0}^b \left( \begin{array}{c} b \\ a \end{array} \right)  P^{(n-a)}(t) f^{(a)}(-k-t).  \] 

Fix $1\le i\le N, \,l\in\Z$ and consider the system of equations \[ g^{(b)}(a_i+ld) = \sum_{a=0}^b \left( \begin{array}{c} b \\ a \end{array} \right)  P^{(b-a)}(a_i+ld) z_{a} \] for $b=0,1,\ldots,m_i$, $z_0,\ldots,z_{m_i}$ being the unknown variables (which we would like to express in terms of $\wt{g}^{(b)}(a_i+ld)$).  Our hypothesis that $\wt{c}_k + a_i \not\in \Z$ implies that $P(a_i+ld) \neq 0$, so the matrix of this system is triangular with non-zero entries along the diagonal. We can thus solve it: let $\wt{z}_{i,l}^0, \wt{z}_{i,l}^1,\ldots,\wt{z}_{i,l}^{m_i}$ be a solution. Then we can rephrase the problem by saying that we now have to find an entire function $f(\omega)$ such that $f^{(a)}(a_i+ld) = \wt{z}_{i,l}^{a}$ for $1\le i\le N$, $0\le a \le m_i$ and all $l\in\Z$. To deduce the existence of such a function, we can now apply the same argument as the one used to deduce the existence of $g(\omega)$ above. \end{proof}

The representation $\msL_{\mba}(\mbm,\mathbf{\la})$ is a highest weight module, so it is
interesting to calculate its associated series $\mbD_{i,k}(z)$, which is equal to
$\sum_{j=1}^N D_{i,j,k}(z)$. The formulas are similar to those in \cite{BKLY}. Set
$h_l^{(p)}(j) = \la_l^{(p)}(j) - \la_{l+1}^{(p)}(j), \, g_{j,l}(z) = \sum_{p=0}^{m_j}
h_l^{(p)}(j) \frac{(-z)^p}{p!}$. We have \begin{equation*} D_{i,j,k}(z)  =  \sum_{p=1}^{m_j} \sum_{l \in \Z}
\la_{(ld+k)n +i-1}^{(p)}(j) \frac{(-z)^p}{p!} e^{-(a_j+ld+k)z} \end{equation*} and \begin{equation*}
\begin{split} D_{i,j,k}(z)  = &
(1-e^{dz})^{-1} \sum_{l\in\Z} e^{-(a_j+ld+k)z} \big( g_{j,(ld+k)n +i-1}(z) + g_{j,(ld+k)n+i}(z) +
\cdots \\ & + g_{j,(ld+k)n + dn + i-2}(z) \big)  \end{split} \end{equation*}

\section{Further discussions}\label{further}

In this section we present further possible research directions related 
to the results of the present paper. 

\subsection{Double affine Lie algebras and Kleinian singularities}

In this section we present further possible research directions related 
to the results of the present paper. $G$ will be an arbitrary finite subgroup of
$SL_2(\C)$. Such a group $G$ does not always act on the torus
$\C^{\times 2}$ or on $\C\times\C^{\times}$, so we can consider only
the algebras $\C[u,v]\rtimes G$ and $\C[u,v]^{G}$. Moreover, when
$G$ is not cyclic, each of these Lie algebras has only one
triangular decomposition, namely: \[ \mfsl_n(\C[u,v]\rtimes G) \cong
\mfn^-(\C[u,v]\rtimes G) \oplus \mfh(\C[u,v]\rtimes G) \oplus
\mfn^+(\C[u,v]\rtimes G) \] \[ \mfsl_n(\C[u,v]^G) \cong
\mfn^-(\C[u,v]^G) \oplus \mfh(\C[u,v]\rtimes G) \oplus
\mfn^+(\C[u,v]^G) \]

These also admit universal central extensions. Since $\C[u,v]^{G}$ is commutative, we have that 
$HC_1(\C[u,v]^{G}) = \frac{\Omega^1(\C[u,v]^{G})}{d(\C[u,v]^{G})}$. We know from
\cite{Ka} that the bracket on the universal central extension $\wh{\mfsl}_n(\C[u,v]^G)
= \mfsl_n(\C[u,v]^G) \oplus \frac{\Omega^1(\C[u,v]^{G})}{d(\C[u,v]^{G})}$ of
$\mfsl_n(\C[u,v]^G)$ is given by \[ [m_1 \ot p_1, m_2 \ot p_2 ] = [m_1,m_2] \ot
(p_1p_2) + Tr(m_1m_2)p_1 dp_2 \]   As for $\wh{\mfsl}_n( \C[u,v]\rtimes G )$, it is
known that its kernel $HC_1(\C[u,v]\rtimes G)$ is equal to
$\frac{\Omega^1(\C[u,v])^{G}}{d(\C[u,v]^{G})}$ (see \cite{Fa}), but to obtain an
explicit formula for its bracket, one would have to choose a splitting $\langle
\C[u,v]\rtimes G  , \C[u,v]\rtimes G \rangle = [ \C[u,v]\rtimes G , \C[u,v]\rtimes G
] \oplus HC_1(\C[u,v]\rtimes G)$ - see section \ref{matrings}.

In \cite{KaVa}, the authors proved that the derived category of coherent sheaves on the
minimal resolution $\wt{\C^2 / G}$  of the singularity $\C^2 / G$ is equivalent to
the derived category of modules over the skew-group ring $\C[u,v] \rtimes G$. It is
thus natural to ask if there is a connection between the derived category of
representations of $\mfsl_n(\C[u,v]\rtimes G)$ and the derived category of modules over
a certain sheaf of Lie algebras on $\wt{\C^2 / G}$.

In the same line of thought, since $A_1 \rtimes G$ and $A_1^{G}$ are Morita
equivalent, one can wonder about the connections between $\mfsl_n(A_1\rtimes G)$ and
$\mfsl_n(A_1^G)$. We observe, however, that even if $A$ and $B$ are Morita
equivalent rings, the categories of representations of $\mfsl_n(A)$ and $\mfsl_n(B)$ are
not necessarily equivalent: as a counterexample, one can consider $A=\C[t^{\pm 1}]$ and
$B = \C[u^{\pm 1}] \rtimes (\Z / d\Z) \cong M_d(\C[t^{\pm 1}])$, in which case
$\mfsl_n(B) = \mfsl_{nd}(\C[t^{\pm 1}])$.

The definitions of Weyl modules from section
\ref{Weylmod} can be adapted to $\mfsl_n(\C[u,v]\rtimes G)$ and
$\mfsl_n(\C[u,v]^G)$. Studying these appear to be a reasonable way to approach the
representation theory of these Lie algebras since, when $G$ is not cyclic, we do not
have triangular decompositions similar to \eqref{td3} or presentations as in proposition
\ref{presC}.  One interesting question is to compute the dimension of local Weyl modules
at the Kleinian singularity. For smooth points on an affine variety and certain highest weights, the dimension of local Weyl modules  has
been computed in \cite{FeLo}, and the case of a double point was treated in \cite{Ku}.
One can expect the study of such local Weyl modules to be related to the geometry of the
minimal resolution of the Kleinian singularity.

\subsection{Quiver Lie algebras}\label{QLiealg}

Symplectic reflection algebras for wreath products of $G$ are known
to be Morita equivalent to certain deformed preprojective algebras of
affine Dynkin quivers which are called Gan-Ginzburg algebras in the
literature \cite{GaGi}. In the rank one case, these are the usual deformed
preprojective algebras $\Pi^{\la}(Q)$. The affine Dynkin diagram in question is
associated to $G$ via the McKay correspondence. The quantum Lie algebra analogs of
these Gan-Ginzburg algebras were introduced in \cite{Gu4} and are
deformations of the enveloping algebra of a Lie algebra which is
slightly larger than the universal central extension of
$\mfsl_n(\Pi(Q))$, where $\Pi(Q)=\Pi^{\la=0}(Q)$. The same themes as in the previous
sections can be studied in the context of the
Lie algebra $\mfsl_n(\Pi(Q))$, in particular when the graph underlying
$Q$ is an affine Dynkin diagram. Actually, when $Q$ is the cyclic quiver on $d$
vertices, $\Pi(Q)\cong \C[u,v]\rtimes \Ga$. Furthermore, if $e_0$ is the extending
vertex of an affine Dynkin diagram, then $e_0 \Pi(Q) e_0 \cong \C[u,v]^{G}$.

All these are examples of matrix Lie algebras over interesting non-commutative rings. It is possible to replace $\mfsl_n$ by another semisimple Lie algebra: this is explained in \cite{BeRe}. It would also be interesting to compare our work with the constructions in \cite{HOT}.

\bigskip

\footnotesize{

\noindent Nicolas Guay \\
Department of Mathematical and Statistical Sciences \\
University of Alberta \\
CAB 632 \\
Edmonton, Alberta  T6G 2G1 \\
Canada
\medskip

\noindent E-mail address: nguay@math.ualberta.ca

\bigskip

\noindent David Hernandez \\ CNRS - \'Ecole Normale Sup\'erieure
 \\
45, rue d'Ulm \\
75005 Paris, \\
France

\medskip

\noindent E-mail address: David.Hernandez@ens.fr

\bigskip

\noindent Sergey Loktev \\
Institute for Theoretical and Experimental Physics \\
Moscow 117218 \\
Russia

\medskip

\noindent E-mail address:  loktev@itep.ru

}

\end{document}